\newtheorem {theorem} {Theorem}
\newtheorem {definition} {Definition}
\newtheorem {proposition} [theorem]{Proposition}
\newtheorem {corollary} [theorem]{Corollary}
\begin{document}
	\renewcommand{\arraystretch}{1.5}
	
	\title[Limit cycles for piecewise smooth differential equations separated by $S^1$]{Limit cycles for classes of piecewise smooth differential equations separated by the unit circle}
	\author[M. D. A. Caldas, R. M. Martins]{Mayara D. A. Caldas$^{1}$, Ricardo M. Martins$^{2}$}
	
	\address{$^{1,2}$ Departamento de Matem\'{a}tica, Universidade Estadual de Campinas, Rua S\'{e}rgio Baruque de Holanda, 651, Cidade Universit\'{a}ria Zeferino Vaz, 13083--859, Campinas, SP, Brazil} 
	
	\email{mcaldas@ime.unicamp.br$^{1}$, rmiranda@unicamp.br$^{2}$}
	
	\subjclass[2010]{34A36,34C07,37C29,37G15}
	
	\keywords{}
	
	\maketitle

\begin{abstract}
	In this article we study the existence of limit cycles in families of piecewise smooth differential equations having the unit circle as discontinuity region. We consider families presenting singularities of center or saddle type, visible or invisible, as well as the case without singularities. We establish an upper bound for the number of limit cycles and give examples showing that the maximum number of limit cycles can be reached. We also discuss the existence of homoclinic cycles for such differential equations in the saddle-center case.
\end{abstract}

\section{Introduction}
	
In this paper we study planar piecewise smooth vector fields
\begin{equation}\label{1}
	(\dot x,\dot y)=Z_{XY}(x,y)=\left\{
	\begin{array}{l}
		X(x,y), \ h(x,y)\leq h_0,\\
		Y(x,y), \ h(x,y)\geq h_0,\\
	\end{array}
	\right.
\end{equation}
where $X,Y:U\subset\mathbb R^2\rightarrow \mathbb R^2$ are affine vector fields, $h:U\rightarrow \mathbb R$ is a smooth function having $h_0\in\mathbb R$ as regular value, and $U$ is an open set.
Over the smooth submanifold $\Sigma=h^{-1}(\{h_0\})$, we assume that the dynamics of $Z$ is provided by Filippov's convention (that we describe in details in Section \ref{fili}).
	
There are several notions of solutions for piecewise smooth differential equations (see for instance \cite{jorge}), and Filippov's notion seems to be suitable for both theoretical developments and applications; see for instance \cite{eco} for an application of Filippov's convention to study a macroeconomic model, \cite{livro1} for applications of piecewise smooth differential equations in real-world models. State of the art bifurcations are described in the survey \cite{survey}.
	
The existence of periodic orbits for piecewise smooth differential equations is a very active area of research and was studied by several authors (see for instance \cite{doug2}). Besides the interest in their applications, one of the main motivations from a theoretical point of view comes from an piecewise smooth version of Hilbert's 16th problem (see \cite{roussarie1,jaume2,coloquio,yu} for the original Hilbert's 16th). 
	
In the case of piecewise smooth differential equations separated by a straight line (for instance, taking $h(x,y)=y$ from \eqref{1}) it is very simple to produce examples with one limit cycle; to produce examples with two limit cycles is not so simple, but it can be done by means of ``closing equations''; to produce an example with three limit cycles is not so straightforward, and it was achieved for the first time in \cite{china1} by using numerical methods. The same result was proved using analytical methods in \cite{3ciclos}, indeed proving the existence of the (same) three cycles using the Newton-Kantorovich theorem. Currently it is conjectured that 3 is the maximum number of limit cycles in this case. Note that if the discontinuity is not over a line, then the result is not true (see for instance \cite{lf} and \cite{doug}). Upper bounds for the maximum number of limit cycles for non-linear piecewise smooth differential equations has also been studied extensively (see for instance \cite{aninha}).

More general results about planar piecewise smooth differential equations, for instance, the local structural stability of planar Filippov differential equations were obtained in \cite{GST}, together with a complete classification of generic bifurcations of low codimension.
	
We are interested in obtaining conditions to the existence  of crossing limit cycles in piecewise smooth differential equations $Z_{XY}$, where $X,Y$ are affine vector fields separated by the unitary circle $S^1=\{(x,y)\in\mathbb R^2, \ x^2+y^2=1\}$. Similar differential equations were studied in \cite{LT,JJJ}.

	

\subsection{Main results}
	
Let $U\subset\mathbb R^2$ be an open and convex set with $(0,0)\in U$. Consider $S^1$ a circle centered at the origin and of radius $1$ such that $S^1\subset U$ and consider $h:U\subset\mathbb R^2\rightarrow\mathbb R$ given by $h(x,y)=x^2+y^2-1$, so that $S^1=h^{-1}(\{0\})$. Let $\mathscr{X}^r$ be the set of vector fields of class $C^r$ defined in $U$. 
	
Given $X,Y\in \mathscr{X}^r$, consider the piecewise smooth vector field
\[Z_{XY}(x,y)=\left\{
\begin{array}{lcl}
	X(x,y),& h(x,y)\leq 0,\\
	Y(x,y),& h(x,y)\geq 0,
\end{array}
\right.\]
where $X$ is defined in the region interior to $S^1$ and $Y$ in the region exterior to $S^1$ and we denote by $\mathfrak{X}^r$  the set of vector fields of type $Z_{XY}$, which can be taken as $\mathfrak{X}^r=\mathscr{X}^r\times \mathscr{X}^r$ inheriting a topology from this product. We introduce the following subsets of $\mathfrak{X}^r$:
	
\begin{itemize}
	\item $\mathfrak{X}^{S^1}_0$ is the set of the piecewise smooth dynamic systems $Z_{XY}$ where $X$ is a constant vector field and $Y$ is a linear vector field with the singularity of center type; 
		
	\item $\mathfrak{X}^{S^1}_1$ is the set of the piecewise smooth dynamic systems $Z_{XY}$ with $X$ a constant vector field and $Y$ a linear vector field with the singularity of saddle type and  $div(Y)=0$;
		
	\item $\mathfrak{X}^{S^1}_2$ is the set of the piecewise smooth dynamic systems $Z_{XY}$ with $X$ a linear vector field with singularity of saddle type and $div(X)=0$ and $Y$ a linear vector field with the singularity of center type;
		
	\item $\mathfrak{X}^{S^1}_3$ is the set of the piecewise smooth dynamic systems $Z_{XY}$ with $X$ a linear vector field with singularity of center type and $Y$ a linear vector field with the singularity of saddle type and $div(Y)=0$;
		
	\item $\mathfrak{X}^{S^1}_4$ is the set of the piecewise smooth dynamic systems $Z_{XY}$ with $X$ and $Y$ linear vector fields with the singularity of saddle type and $div(X)=div(Y)=0$.
\end{itemize}
	
	
Our main results are the following.\\
	
\noindent {\bf Theorem A:} Piecewise smooth vector fields in $\mathfrak{X}^{S^1}_0\cup \mathfrak{X}^{S^1}_1$ admit at most one crossing limit cycle that intersects $S^1$ in two points.\\

\noindent {\bf Theorem B:} Piecewise smooth vector fields in $\mathfrak{X}^{S^1}_2$ admit at most two crossing limit cycles that intersect $S^1$ in two points.\\

\noindent {\bf Theorem C:} The maximum number of crossing limit cycles that intersect $S^1$ in two points for piecewise smooth vector fields in $\mathfrak{X}^{S^1}_3\cup \mathfrak{X}^{S^1}_4$ is less than or equal to two.\\

This paper is divided as follows. Section 2 presents the basic definitions about piecewise smooth vector fields according to Filippov's convention \cite{F}. In Section 3 we present the general construction employed to deduce the closing equations, whose solutions are the closing trajectories of $Z_{XY}$. In Section 4 we study the existence of periodic orbits if $X$ is a constant vector field and prove Theorem A. Section 5 is dedicated to the study of saddles and centers and to the proof of Theorem B and Theorem C.In Section 6 we present a result about he maximum number of crossing limit cycles when the submanifold of discontinuity is a ellipse. 
	
\section{Filippov convention for piecewise smooth differential equations}
\label{fili}
	
Let $X$ and $Y$ be smooth vector fields defined in an open and convex subset $U\subset \mathbb{R}^2$ and, without loss of generality, assume that the origin belongs to $U$. Consider $f:U\rightarrow\mathbb{R}$ a function $\mathcal{C}^r$, with $r>1$, ($\mathcal{C}^r$ denotes the set of continuously differentiable functions of order $r$), for which $0$ is a regular value. Thus, the curve  $\Sigma=f^{-1}(0)\cap U$ is a submanifold of dimension 1 and divides the open set $U$ into two open sets,
$$\Sigma^{+}=\{(x,y)\in U:f(x,y)>0\} \quad \mbox{and} \quad \Sigma^{-}=\{(x,y)\in U:f(x,y)<0\}.$$
	
A Filippov planar system is a piecewise smooth vector field defined in the following form: 
\begin{equation}\label{sist.fil}
	Z_{XY}(x,y)=\begin{cases}
		X(x,y),& (x,y) \in \Sigma^{+}, \\
		Y(x,y), & (x,y) \in \Sigma^{-},
	\end{cases}
\end{equation}
in order to identify the components of the field. Moreover, we assume that $X$ and $Y$ are fields of class $\mathcal{C}^k$ with $k>1$ in $\overline{\Sigma^{+}}$ and $\overline{\Sigma^{-}}$, respectively, where $\overline{\Sigma^{\pm}}$ denotes the closure of $\Sigma^{\pm}$.    
	
By using $\mathcal{Z}^k$ we denote the space of vector fields of this type, which can be taken as $\mathcal{Z}^k=\mathcal{X}^k\times \mathcal{X}^k$, where, by abuse of notation, $\mathcal{X}^k$ denotes the set of vector fields of class $\mathcal{C}^k$ defined in $\overline{\Sigma^{+}}$ and $\overline{\Sigma^{-}}$. We consider $\mathcal{Z}^k$ with the product topology $\mathcal{C}^k$.
	
To establish the dynamic given by a Filippov vector field $Z_{XY}$ in $U$, we need to define the local trajectory through a point $p\in U$, that is, we must define the flow $\varphi_z(t,p)$ of (\ref{sist.fil}). If $p \in \Sigma_{}^{\pm}$, then the trajectory through $p$ is given by the fields $X$ and $Y$ in the usual way. However, if $p \in \Sigma$, we must be more careful defining the trajectory. In order to extend the definition of trajectory for $\Sigma$, we are going to divide the discontinuity submanifold $\Sigma$ in the closure of three disjoint regions: 
\begin{itemize}
	\item[1.] Crossing region: $ \Sigma^{c}=\{p \in \Sigma\colon Xf(p)\cdot Yf(p) >0\}$,
	\item[2.] Sliding region: $ \Sigma^{s}=\{p \in \Sigma\colon Xf(p) <0,  Yf(p) >0\}$,
	\item[3.] Escape region: $ \Sigma^{e}=\{p \in \Sigma\colon Xf(p) >0,  Yf(p) <0\}$,
\end{itemize}
where $Xf(p)=\langle X(p), \nabla f(p)\rangle$ and $Yf(p)=\langle Y(p), \nabla f(p)\rangle$, are Lie's derivative of $f$ with respect to the field $X$ in $p$ and $f$ with respect to the field $Y$ in $p$, respectively. These three regions are open subsets of $\Sigma$ in the induced topology and can have more than one convex component. 
	
We can observe that when defining the regions above we aren't including the {tangent points}, that is, the points $p \in \Sigma$ for which $Xf(p)=0$ or $Yf(p)=0$. These points are in the boundaries of the regions $\Sigma^{c}$, $\Sigma^{s}$ and $\Sigma^{e}$, which are going to be denoted by $\partial \Sigma^{c}$, $\partial \Sigma^{s}$ and $\partial\Sigma{}^{e}$, respectively.
	
Note that if $X(p)=0$, then $Xf(p)=0$, so the critical points of $X$ in $\Sigma$ are also included in the tangent points. Now, if $X(p)\neq 0$ and $Xf(p)=0$, we confirm that the trajectory of $X$ passing through $p$ is, indeed, tangent to $\Sigma$.
	
We can distinguish the tangency types between a smooth field and a manifold depending on how the contact between the trajectory of the field and the manifold occurs. Next, we define two types of tangency.
	
\begin{definition}
	A smooth vector field $X$ admits a fold or quadratic tangency with $\Sigma = \{(x,y)\in U: f(x,y)=0\}$ in a point $p\in \Sigma$ if $Xf(p)=0$ and $X^2f(p)\neq 0$, being $X^2f(p)=\langle X(p), \nabla Xf(p)\rangle$.
\end{definition}
	
\begin{definition}
	A smooth vector field $X$ admits a cusp or cubic tangency with $\Sigma = \{(x,y)\in U: f(x,y)=0\}$ in a point $p\in \Sigma$ if $Xf(p)=X^2f(p)=0$ and $X^3f(p)\neq 0$, being $X^3f(p)=\langle X(p), \nabla X^2f(p)\rangle$.
\end{definition}
	
Let's define the trajectory passing through a point $p$ in $\Sigma^{c}$, $\Sigma^{s}$ and $\Sigma^{e}$.  
	
If $p \in \Sigma^{c}$, both the vector fields $X$ and $Y$ point to $\Sigma^{+}$ or $\Sigma^{-}$ and, therefore, it is sufficient to concatenate the trajectories of $X$ and $Y$ that pass through $p$. 
	
If $p\in \Sigma^{s}\cup \Sigma^{e}$, we have that the vector fields point to opposite directions, thus, we can't concatenate the trajectories. In this way, the local orbit is provided by the Filippov convention. Thus, we define the { sliding vector field}
\begin{equation}\label{campo_deslizante}
	Z^{s}(p)=\frac{1}{Yf(p)-Xf(p)}(Yf(p)X(p)-Xf(p)Y(p)).
\end{equation}  
	
Note that $Z^s$ represents the convex linear combination of $X(p)$ and $Y(p)$ so that $Z^{s}$ is tangent to $\Sigma$, moreover, its trajectories are contained in $\Sigma^{s}$ or $\Sigma^{e}$. Thus, the trajectory through $p$ is the trajectory defined by the sliding vector field in (\ref{campo_deslizante}).
	
\section{Deduction of the closing equations in the general case}
	
The following result allows us to study the existence of crossing limit cycles that intersect $S^1$ in two points for all the sets described previously. 
	
\begin{proposition}\label{clossingequation}
	Let $X,Y\in \mathfrak{X}^r(U)$ be nonzeros, with $X(x,y)=(\eta+ax+by,\zeta+cx+dy)$, $Y(x,y)=(\delta+lx+ky,\varepsilon +mx+ny)$ and $div X= div Y =0$. Then, the system of closing equations to study the existence of limit cycles that intersect the discontinuity manifold $S$ in two distinct points $(p,q)$ and $(r,s)$ is provided by
	$$\left\lbrace \def\arraystretch{1.5} \begin{array}{l}
		-\zeta p +\eta q +apq-\frac{cp^2}{2}+\frac{bq^2}{2}=-\zeta r +\eta s +ars-\frac{cr^2}{2}+\frac{bs^2}{2},\\
		-\varepsilon p +\delta q +lpq-\frac{mp^2}{2}+\frac{kq^2}{2}=-\varepsilon r +\delta s +lrs-\frac{mr^2}{2}+\frac{ks^2}{2},\\
		p^2+q^2=1,\\
		r^2+s^2=1.
	\end{array}\right. $$
\end{proposition}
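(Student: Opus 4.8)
The plan is to exploit the elementary but decisive fact that a planar vector field with zero divergence is Hamiltonian, so that its orbits are contained in the level curves of a Hamiltonian function; the four closing equations will then simply record that the two crossing points lie on a common orbit of $X$ and on a common orbit of $Y$, together with the constraint that they lie on $S^1$.

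First I would fix the geometric picture. A crossing limit cycle of $Z_{XY}$ meeting $S^1$ in exactly the two distinct points $(p,q)$ and $(r,s)$ is the concatenation of an arc $\g_X$ of a trajectory of $X$ contained in the closed disk $\{h\le 0\}$ and joining the two points, with an arc $\g_Y$ of a trajectory of $Y$ contained in $\{h\ge 0\}$ and joining them back. Hence $(p,q)$ and $(r,s)$ lie on a single orbit of $X$ and, simultaneously, on a single orbit of $Y$. Since both points belong to $S^1=h^{-1}(0)$, they satisfy $p^2+q^2=1$ and $r^2+s^2=1$, which already furnishes the last two equations of the system.

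Next I would produce the first two equations. Because $\mathrm{div}\,X=a+d=0$ we have $d=-a$, and since $U$ is convex (hence simply connected) the field $X$ admits a globally defined Hamiltonian $H_X$, obtained by integrating $\partial_y H_X=\eta+ax+by$ and $-\partial_x H_X=\zeta+cx+dy$. The integration gives
\[
H_X(x,y)=-\zeta x+\eta y+axy-\frac{c}{2}x^2+\frac{b}{2}y^2,
\]
and the analogous computation for $Y$, using $\mathrm{div}\,Y=l+n=0$, i.e. $n=-l$, gives
\[
H_Y(x,y)=-\varepsilon x+\delta y+lxy-\frac{m}{2}x^2+\frac{k}{2}y^2.
\]
Since $H_X$ and $H_Y$ are constant along the trajectories of $X$ and $Y$ respectively, the fact that $(p,q)$ and $(r,s)$ lie on the same orbit of $X$ forces $H_X(p,q)=H_X(r,s)$, which is precisely the first closing equation; likewise $H_Y(p,q)=H_Y(r,s)$ is the second. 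Collecting the four identities yields the stated system.

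The computation is essentially mechanical, so the only genuinely delicate point is the correctness of the geometric set-up in the first step: one must guarantee that a crossing limit cycle intersecting $S^1$ in exactly two points decomposes into a single $X$-arc and a single $Y$-arc, with no sliding, so that both endpoints truly lie on one orbit of each field. This is where the crossing hypothesis (both points in $\Sigma^c$) enters; it ensures that the four equations are the correct \emph{necessary} conditions, while the remaining crossing inequalities are imposed separately when one analyses the admissible solutions.
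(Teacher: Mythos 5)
Your proposal is correct and follows essentially the same route as the paper: both arguments reduce the closing conditions to the equalities $H_1(p,q)=H_1(r,s)$ and $H_2(p,q)=H_2(r,s)$ for the explicit quadratic first integrals of the divergence-free affine fields, together with $p^2+q^2=r^2+s^2=1$. The only (immaterial) difference is that you obtain the first integrals by directly integrating the Hamiltonian equations $\partial_y H_X=\eta+ax+by$, $-\partial_x H_X=\zeta+cx+dy$, whereas the paper finds them via an undetermined-coefficients ansatz for a general quadratic and solves the resulting linear system; both yield the same $H_1$ and $H_2$.
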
 
	
\begin{proof}
	We know that $X(x,y)=(\eta+ax+by,\zeta+cx+dy)$ with $divX=0$, hence 
	$$divX=\frac{\partial (\eta +ax+by)}{\partial x}+\frac{\partial (\zeta +cx+dy)}{\partial y}=a+d=0 \Rightarrow d=-a,$$
	thus, $X(x,y)=(\eta+ax+by,\zeta+cx-ay)$.
		
	Consider $H_1(x,y)=h_{00}+h_{10}x+h_{11}xy+h_{01}y+h_{10}x^2+h_{01}y^2$, we have 
	$$\nabla H_1(x,y)=(h_{10}+h_{11}y+2h_{20}x,h_{01}+h_{11}x+2h_{02}y), $$
	thus,
	$$\begin{array}{lll}
	\left\langle \nabla H_1, X \right\rangle & = & \left\langle (h_{10}+h_{11}y+2h_{20}x,h_{01}+h_{11}x+2h_{02}y), (\eta+ax+by,\zeta+cx-ay) \right\rangle \\[0.5cm] 
	&=&(h_{10}\eta+h_{01}\zeta)+x(h_{10}a+2h_{02}\eta+h_{01}c+h_{11}\zeta)+y(h_{10}b+h_{11}\eta-h_{01}a+2h_{02}\zeta)\\[0.5cm]
	& & +xy(h_{11}a+2h_{20}b-h_{11}a+2h_{02}c)+x^2(2h_{20}a+h_{11}c)+y^2(h_{11}b-2h_{02}a).
	\end{array}$$	
	We want that $\left\langle \nabla H_1, X \right\rangle=0$, then we obtain the system
	$$\left\lbrace \begin{array}{l}
	h_{10}\eta+h_{01}\zeta=0,\\
	h_{10}a+2h_{02}\eta+h_{01}c+h_{11}\zeta=0,\\
	h_{10}b+h_{11}\eta-h_{01}a+2h_{02}\zeta=0,\\
	2h_{20}b+2h_{02}c=0, \\
	2h_{20}a+h_{11}c)+y^2(h_{11}b-2h_{02}a=0.
	\end{array}	\right. $$
	Solving this system we have 
	$$h_{10}=\frac{2h_{20}\zeta}{c}, \quad h_{01}=-\frac{2h_{20}\eta}{c},\quad h_{02}=-\frac{h_{20}b}{c}\quad \mbox{and} \quad h_{11}=-\frac{2h_{20}a}{c}.$$
	Taking $h_{20}=-\frac{c}{2}$, we obtain that the first integral of the field $X$ is 
	$$H_1(x,y)=-\zeta x+\eta y+axy -\frac{cx^2}{2}+\frac{by^2}{2}.$$
	In a similar way, we determine that the first integral of the field $Y$ is
	$$H_2(x,y)=-\varepsilon x+\delta y+lxy -\frac{mx^2}{2}+\frac{ky^2}{2}.$$
		
	Therefore, the system that we obtain from the closing equations to study the existence of crossing limit cycles that intersect the discontinuity manifold $S$ in two distinct points $(p,q)$ and $(r,s)$ is provided by:
	$$\left\lbrace \def\arraystretch{1.5} \begin{array}{l}
		H_1(p,q)=H_1(r,s),\\
		H_2(p,q)=H_2(r,s),\\
		h(p,q)=0,\\
		h(r,s)=0.
	\end{array}\right.\Rightarrow
	\left\lbrace \def\arraystretch{1.5} \begin{array}{l}
		-\zeta p +\eta q +apq-\frac{cp^2}{2}+\frac{bq^2}{2}=-\zeta r +\eta s +ars-\frac{cr^2}{2}+\frac{bs^2}{2},\\
		-\varepsilon p +\delta q +lpq-\frac{mp^2}{2}+\frac{kq^2}{2}=-\varepsilon r +\delta s +lrs-\frac{mr^2}{2}+\frac{ks^2}{2},\\
		p^2+q^2=1,\\
		r^2+s^2=1.
	\end{array}\right.$$
\end{proof}

\section{Existence of periodic orbits in the presence of constant differential equations}

In this section we study the existence of crossing limit cycles when one of the vector fields is constant. First we demonstrate Theorem A, then we are going to present examples of each case in the next subsections.

\begin{proof}[Proof of Theorem A]
Consider $X(x,y)=(\eta,\zeta)$ and $Y(x,y)=(\delta+lx+ky,\varepsilon +mx+ny)$ with $divY=0$, we have that $Z_{XY}\in \mathfrak{X}^{S^1}_0\cup \mathfrak{X}^{S^1}_1$, by the Proposition \ref{clossingequation} the system of closing equations to study the existence of limit cycles that intersect $S^1$ in two points $(p,q)$ and $(r,s)$ is given by 
$$\left\lbrace \def\arraystretch{1.5} \begin{array}{l}
	-\zeta p +\eta q =-\zeta r +\eta s,\\
	-\varepsilon p +\delta q +lpq-\frac{mp^2}{2}+\frac{kq^2}{2}=-\varepsilon r +\delta s +lrs-\frac{mr^2}{2}+\frac{ks^2}{2},\\
	p^2+q^2=1,\\
	r^2+s^2=1.
\end{array}\right. $$
To solve this system, we use the Grobener base, with which we obtain two solution
$(p,q,r,s)$ and $(r,s,p,q)$,
being
$$ \begin{array}{lll}
	p&=&\dfrac{1}{\zeta  \left(\zeta ^2+\eta ^2\right) \left(\zeta  \eta  (k+m)+l \left(\eta ^2-\zeta ^2\right)\right)^2}\left(\delta  \zeta ^6 \eta  k+\delta  \zeta ^4 \eta ^3 k-\zeta ^5 \eta ^2 k \epsilon -\zeta ^3 \eta ^4 k \epsilon +\delta  \zeta ^7 (-l)\right. \\[0.5cm]
	& &\left. +\delta  \zeta ^3 \eta ^4 l+\zeta ^6 \eta  l \epsilon -\zeta ^2 \eta ^5 l \epsilon +\delta  \zeta ^6 \eta  m+\delta  \zeta ^4 \eta ^3 m-\zeta ^5 \eta ^2 m \epsilon -\zeta ^3 \eta ^4 m \epsilon +\eta  \left[ \zeta ^2 \left(\zeta ^2+\eta ^2\right) \right.\right.  \\[0.5cm]
	& &  \left(\zeta  \eta  (k+m)+l \left(\eta ^2-\zeta ^2\right)\right)^2 \left(-\delta ^2 \zeta ^2 \left(\zeta ^2+\eta ^2\right)+2 \delta  \zeta  \eta  \epsilon  \left(\zeta ^2+\eta ^2\right)+\eta ^2 \left(\zeta ^2 k^2+2 \zeta ^2 k m \right.\right.  \\[0.5cm]
	& &\left.  \left. \left.\left.  +\zeta ^2 m^2+\epsilon ^2 \left(-\left(\zeta ^2+\eta ^2\right)\right)\right)-2 \zeta  \eta  l \left(\zeta ^2-\eta ^2\right) (k+m)+l^2 \left(\zeta ^2-\eta ^2\right)^2\right)\right]^{\frac{1}{2}}  \right) 
\end{array}$$
	
$$\begin{array}{lll}
	q&=&\dfrac{1}{\left(\zeta ^2+\eta ^2\right) \left(\zeta  \eta  (k+m)+l \left(\eta ^2-\zeta ^2\right)\right)^2}\left(-\delta  \zeta ^4 \eta ^2 k-\delta  \zeta ^2 \eta ^4 k+\zeta ^3 \eta ^3 k \epsilon +\zeta  \eta ^5 k \epsilon +\delta  \zeta ^5 \eta  l-\delta  \zeta  \eta ^5 l\right. \\[0.5cm]
	& &\left. \left. -\zeta ^4 \eta ^2 l \epsilon +\eta ^6 l \epsilon -\delta  \zeta ^4 \eta ^2 m-\delta  \zeta ^2 \eta ^4 m+\zeta ^3 \eta ^3 m \epsilon +\zeta  \eta ^5 m \epsilon +\left[ \zeta ^2 \left(\zeta ^2+\eta ^2\right) \left(\zeta  \eta  (k+m)\right. \right. \right. \right. \\[0.5cm]
	& &\left. \left. +l \left(\eta ^2-\zeta ^2\right)\right)^2 \left(-\delta ^2 \zeta ^2 \left(\zeta ^2+\eta ^2\right)+2 \delta  \zeta  \eta  \epsilon  \left(\zeta ^2+\eta ^2\right)+\eta ^2 \left(\zeta ^2 k^2+2 \zeta ^2 k m+\zeta ^2 m^2\right. \right.\right.  \\[0.5cm]
	& &\left. \left. \left. \left. +\epsilon ^2 \left(-\left(\zeta ^2+\eta ^2\right)\right)\right)-2 \zeta  \eta  l \left(\zeta ^2-\eta ^2\right) (k+m)+l^2 \left(\zeta ^2-\eta ^2\right)^2\right)\right]^{\frac{1}{2}} \right) 
	\end{array}$$
	
$$\begin{array}{lll}
	r&=&\dfrac{1}{\zeta  \left(\zeta ^2+\eta ^2\right) \left(\zeta  \eta  (k+m)+l \left(\eta ^2-\zeta ^2\right)\right)^2}\left(\delta  \zeta ^6 \eta  k+\delta  \zeta ^4 \eta ^3 k-\zeta ^5 \eta ^2 k \epsilon -\zeta ^3 \eta ^4 k \epsilon -\delta  \zeta ^7 l+\delta  \zeta ^3 \eta ^4 l\right. \\[0.5cm]
	& &\left. \left.  +\zeta ^6 \eta  l \epsilon -\zeta ^2 \eta ^5 l \epsilon +\delta  \zeta ^6 \eta  m+\delta  \zeta ^4 \eta ^3 m-\zeta ^5 \eta ^2 m \epsilon -\zeta ^3 \eta ^4 m \epsilon -\eta \left[ \zeta ^2 \left(\zeta ^2+\eta ^2\right) \left(\zeta  \eta  (k+m)\right. \right. \right. \right. \\[0.5cm]
	& & \left. \left.  +l \left(\eta ^2-\zeta ^2\right)\right)^2 \left(-\delta ^2 \zeta ^2 \left(\zeta ^2+\eta ^2\right)+2 \delta  \zeta  \eta  \epsilon  \left(\zeta ^2+\eta ^2\right)+\eta ^2 \left(\zeta ^2 k^2+2 \zeta ^2 k m+\zeta ^2 m^2\right. \right. \right. \\[0.5cm]
	& &\left. \left. \left. \left. +\epsilon ^2 \left(-\left(\zeta ^2+\eta ^2\right)\right)\right)-2 \zeta  \eta  l \left(\zeta ^2-\eta ^2\right) (k+m)+l^2 \left(\zeta ^2-\eta ^2\right)^2\right)\right]^{\frac{1}{2}}   \right) 
\end{array}
$$
	
$$
\begin{array}{lll}
	s&=&-\dfrac{1}{\left(\zeta ^2+\eta ^2\right) \left(\zeta  \eta  (k+m)+l \left(\eta ^2-\zeta ^2\right)\right)^2}\left(\delta  \zeta ^4 \eta ^2 k+\delta  \zeta ^2 \eta ^4 k-\zeta ^3 \eta ^3 k \epsilon -\zeta  \eta ^5 k \epsilon -\delta  \zeta ^5 \eta  l+\delta  \zeta  \eta ^5 l\right. \\[0.5cm]
	& &\left. \left. +\zeta ^4 \eta ^2 l \epsilon -\eta ^6l \epsilon +\delta  \zeta ^4 \eta ^2 m+\delta  \zeta ^2 \eta ^4 m-\zeta ^3 \eta ^3 m \epsilon -\zeta  \eta ^5 m \epsilon +\left[ \zeta ^2 \left(\zeta ^2+\eta ^2\right) \left(\zeta  \eta  (k+m)\right.\right. \right. \right.   \\[0.5cm]
	& &\left. \left. +l \left(\eta ^2-\zeta ^2\right)\right)^2 \left(-\delta ^2 \zeta ^2 \left(\zeta ^2+\eta ^2\right)+2 \delta  \zeta  \eta  \epsilon  \left(\zeta ^2+\eta ^2\right)+\eta ^2 \left(\zeta ^2 k^2+2 \zeta ^2 k m+\zeta ^2 m^2\right. \right.\right.  \\[0.5cm]
	& & \left. \left. \left.\left.  +\epsilon ^2 \left(-\left(\zeta ^2+\eta ^2\right)\right)\right)-2 \zeta  \eta  l \left(\zeta ^2-\eta ^2\right) (k+m)+l^2 \left(\zeta ^2-\eta ^2\right)^2\right)\right]^{\frac{1}{2}}  \right)  
\end{array}
$$
with $\zeta \neq 0$, $\eta \neq \pm \zeta$ and $l\neq \frac{(k+m)\zeta\eta}{\zeta^2-\eta^2}$.

We note that the solutions define the same closed curves that pass through $(p,q)$ and $(r,s)$. Therefore $Z_{XY}$ admits at most one crossing limit cycle that intersects $S^1$ in two points.  
\end{proof}

\subsection{Constant-center case}

In this subsection, we show that there are vector fields in $\mathfrak{X}^{S^1}_0$ that admit an infinite number of crossing periodic orbits, vector fields that do not have periodic orbits, and vector fields that admit only one crossing limit cycle. To do this, we are going to present examples of each case.

\begin{corollary}
	There is $Z_{XY} \in \mathfrak{X}^{S^1}_0$ that admits an infinite number of crossing periodic orbits that intersect $S^1$ in two points.
\end{corollary}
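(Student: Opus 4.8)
The goal is to exhibit a single explicit member of $\mathfrak{X}^{S^1}_0$ whose crossing dynamics produces a continuum of periodic orbits, so the plan is constructive rather than abstract. Recall that elements of $\mathfrak{X}^{S^1}_0$ have $X(x,y)=(\eta,\zeta)$ constant inside $S^1$ and $Y$ linear with a center outside. The cleanest way to force infinitely many crossing periodic orbits is to arrange a symmetry that makes \emph{every} chord of $S^1$ close up automatically, so that the closing equations from Proposition \ref{clossingequation} are satisfied identically along a one-parameter family of crossing points $(p,q)$, $(r,s)$.

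First I would pick the constant field and the center so that the whole configuration is invariant under a reflection. A natural choice is $X(x,y)=(0,1)$ (or $(1,0)$), whose trajectories inside the disc are vertical (resp. horizontal) straight segments, together with $Y(x,y)=(-y,x)$, the standard harmonic oscillator whose orbits outside $S^1$ are concentric circles centered at the origin. With this $Y$, any trajectory of $Y$ starting on $S^1$ is itself an arc of a circle centered at $0$; the circle of radius $1$ is invariant, and more importantly the flow of $Y$ maps $S^1$ to itself, so a point leaving the disc on $S^1$ returns to $S^1$. I would verify directly that the first integrals produced in Proposition \ref{clossingequation} degenerate: for $X=(0,1)$ one gets $H_1(x,y)=y$ (up to the normalization there), so the first closing equation becomes $q=s$, while the rotational symmetry of $Y$ makes the second closing equation reduce to a relation respected by the reflection $(p,q)\mapsto(-p,q)$.

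The concrete verification is the heart of the argument. I would show that for the chosen pair, every vertical chord of the disc, i.e. every pair of crossing points of the form $(p,q)=(\sqrt{1-q^2},q)$ and $(r,s)=(-\sqrt{1-q^2},q)$ with $q\in(-1,1)$, together with the complementary circular arc of $Y$ on the outside, forms a closed crossing trajectory. Inside the disc the field $X=(0,1)$ moves points upward along the vertical line $x=\mathrm{const}$, and by the reflection symmetry $x\mapsto -x$ (which preserves $S^1$, sends orbits of $Y=(-y,x)$ to orbits of $Y$, and matches the two contact points at the same height $q$), the arc of $Y$ joining $(-\sqrt{1-q^2},q)$ back to $(\sqrt{1-q^2},q)$ closes the loop. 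One must also check the Filippov crossing condition $Xf\cdot Yf>0$ at both contact points so these are genuine crossing (not sliding) periodic orbits; since $Xf=\langle X,\nabla h\rangle=2q$ for $X=(0,1)$ and $Yf=\langle Y,\nabla h\rangle=0$ identically for the rotational $Y$, I expect this particular rotational choice to land on tangency, so I would instead perturb $Y$ to $(-y,x)$ composed with a small radial outward push, or more simply tilt $X$, to make both Lie derivatives nonzero while preserving the reflection symmetry.

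That last point is exactly where I expect the main obstacle to lie: the most symmetric choice $Y=(-y,x)$ gives $Yf\equiv 0$ on all of $S^1$, so $S^1$ is filled with tangency points and no honest crossing occurs. The real work is to choose the center $Y$ so that its orbits still respect the reflection $x\mapsto -x$ (guaranteeing closure of the loops) \emph{and} cross $S^1$ transversally, so that a genuine one-parameter family of crossing periodic orbits appears; I would look for a linear center of the form $Y(x,y)=(\delta-ny,\,mx)$ or similar, with the axis of the center aligned with the reflection axis, and then read off from the explicit closing-equation formulas in the proof of Theorem A precisely which parameter relations collapse the two equations into identities (this corresponds to the excluded degenerate cases $\zeta=0$, $\eta=\pm\zeta$, or $l=(k+m)\zeta\eta/(\zeta^2-\eta^2)$ noted there, where the generic ``at most one'' conclusion fails). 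Exhibiting one parameter vector satisfying those degeneracy relations, and checking transversality at the contact points, completes the proof.
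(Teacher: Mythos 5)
Your proposal follows essentially the same route as the paper: choose a constant field with straight horizontal (or vertical) orbits inside the disc and a linear center at the origin whose orbits are reflection-symmetric, so that the closing equations of Proposition \ref{clossingequation} collapse to $q=s$, $p=-r$ and every admissible chord closes up. You also correctly flag the one real pitfall — the maximally symmetric choice $Y=(-y,x)$ makes $Yh\equiv 0$ on $S^1$, so the whole circle is tangential and nothing crosses — and the fix you describe (an anisotropic center aligned with the reflection axis) is exactly what the paper uses: $X=(1,0)$, $Y=(2y,-6x)$, with first integrals $y$ and $3x^2+y^2$, whose elliptic orbits meet $S^1$ transversally away from finitely many points. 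The only thing missing is that you stop at the recipe rather than writing down such a $Y$ and confirming $Xh\cdot Yh>0$ on an open arc (here $Xh=2x$, $Yh=-8xy$, so the chords with $q=s<0$ give the continuum of genuine crossing periodic orbits); that last step is routine given your setup.
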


\begin{proof}
	Consider the piecewise smooth vector field $Z_{XY}$, where $$X(x,y)=(1, 0)\quad \mbox{and} \quad Y(x,y)=(2 y, -6 x).$$
	The fields $X$ and $Y$ have the following first integrals  
	$$H_1(x,y)=y\quad \mbox{and}\quad H_2(x,y)=3x^2 + y^2,$$
	respectively.
	
	To study the existence of crossing limit cycles of the field $Z_{XY}$ we use the closing equations, with which we obtain the system of nonlinear equations
	$$\left\lbrace \def\arraystretch{1.5} \begin{array}{l}
		q=s,\\
		3p^2 + q^2=3r^2 + s^2,\\
		p^2+q^2=1,\\
		r^2+s^2=1,
	\end{array}\right. $$
	where $(p,q,r,s)$ satisfies $(p,q)\neq (r,s)$. From the closing equations we have $q=s$. 
	Thus, we get that $p=\pm r$. So, $(p,q,r,s)=(p,q,\pm p, q)$, and since $(p,q)\neq (r,s)$ we cannot have $p=r$. Hence, $(p,q,r,s)=(p,q,-p,q)$.
	
	Therefore, $Z_{XY}$ has an infinite number of crossing periodic orbits that intersect $S^1$ in two points, which we illustrate in Figure \ref{ccorbper}.
	
\end{proof}

\begin{corollary}
	There is $Z_{XY} \in \mathfrak{X}^{S^1}_0$ that does not have crossing periodic orbits that intersect $S^1$ in two points.
\end{corollary}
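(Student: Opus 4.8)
The plan is to produce one explicit field $Z_{XY}\in\mathfrak{X}^{S^1}_0$ and to check, through the closing equations of Proposition~\ref{clossingequation}, that every solution forces the two intersection points to coincide; since a genuine crossing periodic orbit meeting $S^1$ in two points would produce a solution with \emph{distinct} points, no such orbit can exist. The construction parallels the previous corollary, the difference being that I translate the center of $Y$ away from the origin in order to break the symmetry that there generated a continuum of periodic orbits.

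Concretely, I would take the constant field $X(x,y)=(1,0)$ and the linear center $Y(x,y)=(y,\,2-x)$, whose unique singularity lies at $(2,0)$, outside the closed unit disk. One verifies at once that $div\,Y=0$ and that the linear part of $Y$ has eigenvalues $\pm i$, so $Y$ is of center type and $Z_{XY}\in\mathfrak{X}^{S^1}_0$. By Proposition~\ref{clossingequation} the associated first integrals are $H_1(x,y)=y$ and $H_2(x,y)=-2x+\tfrac12 x^2+\tfrac12 y^2$.

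Next I would write and solve the closing equations for the two points $(p,q)$ and $(r,s)$ on $S^1$. The equation $H_1(p,q)=H_1(r,s)$ gives $q=s$ immediately. The decisive step is the equation $H_2(p,q)=H_2(r,s)$: inserting the circle constraints $p^2+q^2=r^2+s^2=1$ collapses the quadratic part of $H_2$ to the constant $\tfrac12$, so the equation reduces to $-2p=-2r$, that is $p=r$. Together with $q=s$ this yields $(p,q)=(r,s)$, contradicting $(p,q)\neq(r,s)$. Hence the closing system has no admissible solution and $Z_{XY}$ possesses no crossing periodic orbit intersecting $S^1$ in two points.

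The only point requiring care, and the reason for choosing the center off-axis, is guaranteeing that the circle constraints truly eliminate the dependence of $H_2$ on each point, leaving a relation solvable only when $p=r$. In the notation of Proposition~\ref{clossingequation}, the difference of $H_2$ between the two endpoints of the horizontal chord at height $t$ equals $2\sqrt{1-t^2}\,(lt-\varepsilon)$; taking $Y$ with $l=0$ and $\varepsilon\neq0$ (here $\varepsilon=2$) keeps this expression nonzero for all $t\in(-1,1)$, which is exactly the obstruction to closing. The degenerate choice $l=\varepsilon=0$ recovers the symmetric field of the previous corollary with its infinitely many orbits, so the two corollaries are complementary outcomes of the same computation.
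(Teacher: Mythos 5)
Your proof is correct and follows essentially the same strategy as the paper: exhibit an explicit field in $\mathfrak{X}^{S^1}_0$, write the closing equations from Proposition~\ref{clossingequation}, and use the circle constraints to collapse them to $(p,q)=(r,s)$, contradicting the requirement of two distinct intersection points. The paper merely uses a different numerical example ($X=(0,-1)$, $Y=(1+10y,-1-2x)$), and your closing remark identifying $2\sqrt{1-t^2}\,(lt-\varepsilon)\neq 0$ as the general obstruction is a nice complement but does not change the method.
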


\begin{proof}
	Consider the piecewise smooth vector field $Z_{XY}$, on which $$X(x,y)=(0,-1)\quad \mbox{and} \quad Y(x,y)=(1 + 10 y, -1 - 2 x).$$
	The fields $X$ and $Y$ have the following first integrals 
	$$H_1(x,y)=x\quad \mbox{and}\quad H_2(x,y)=x + x^2 + y + 5y^2,$$
	respectively.
	
	To study the existence of crossing limit cycles of the field $Z_{XY}$ we use the closing equations, with which we obtain the system of nonlinear equations
	$$\left\lbrace \def\arraystretch{1.5} \begin{array}{l}
		p=r,\\
		p + p^2 + q + 5q^2=r + r^2 + s + 5s^2,\\
		p^2+q^2=1,\\
		r^2+s^2=1,
	\end{array}\right. $$
	where $(p,q,r,s)$ satisfies $(p,q)\neq (r,s)$. From the closing equations we have $p=r$, $q^2=1-p^2$ and $s^2=1-r^2$. Replacing this information on the second equation of the system we get
	$q=s$.
	Thus, $(p,q,r,s)=(p,q,p,q)$. 
	
	Therefore, since $(p,q)\neq (r,s)$, $Z_{XY}$ does not have crossing periodic orbits that intersect $S^1$ in two points, as we illustrate in Figure \ref{ccsemorb}.
	
\end{proof}

\begin{corollary}
\label{st332}
	There is $Z_{XY} \in \mathfrak{X}^{S^1}_0$ that has exactly one crossing limit cycle that intersects $S^1$ in two points.
\end{corollary}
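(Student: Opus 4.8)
The plan is to produce a single explicit example, since Theorem A already guarantees that every $Z_{XY}\in\mathfrak{X}^{S^1}_0$ has at most one crossing limit cycle intersecting $S^1$ in two points; hence it suffices to exhibit one member of $\mathfrak{X}^{S^1}_0$ for which the closing equations have a solution $(p,q,r,s)$ with $(p,q)\neq(r,s)$ that corresponds to a genuine crossing periodic orbit. Geometrically, for such a field the $X$-arc inside $S^1$ is a straight chord (a level line of the affine integral $H_1$) while the $Y$-arc outside $S^1$ is an arc of an ellipse (a level curve of $H_2$), so a crossing cycle is exactly a chord of $S^1$ whose two endpoints lie on a common level set of $H_2$.

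Following the format of the two previous corollaries, I would take $X(x,y)=(1,0)$, so that $H_1(x,y)=y$ and the first closing equation forces $q=s$; combined with $p^2+q^2=r^2+s^2=1$ this gives $r=\pm p$, and distinctness forces $r=-p$. The point is then to choose $Y$ a linear center with $\operatorname{div}Y=0$ whose first integral $H_2$ is \emph{not} even in $x$, so that the condition $H_2(p,q)=H_2(-p,q)$ no longer holds identically (as it did in the infinite-orbit example) but instead pins down a unique height $q$. Concretely I would pick $Y(x,y)=(\delta+lx+ky,\varepsilon+mx-ly)$ with $l\neq0$ and $km<-l^{2}$, so that the linear part has purely imaginary eigenvalues and $Y$ is genuinely of center type, together with $\varepsilon\neq0$; then $H_2(x,y)=-\varepsilon x+\delta y+lxy-\tfrac{m}{2}x^{2}+\tfrac{k}{2}y^{2}$ and
\[
H_2(p,q)-H_2(-p,q)=2p\,(lq-\varepsilon).
\]
Since $p\neq0$ for a nondegenerate chord, this vanishes precisely when $q=\varepsilon/l$, a single admissible value (after fixing the remaining constants so that $|\varepsilon/l|<1$). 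This yields the unique pair $(p,q)=\bigl(\sqrt{1-(\varepsilon/l)^{2}},\,\varepsilon/l\bigr)$ and $(r,s)=(-p,q)$, which together with its mirror $(r,s,p,q)$ are the only solutions, and they describe the same closed curve.

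With these numbers in hand I would then verify that the resulting closed curve is really a crossing limit cycle and not a sliding or tangential configuration. This amounts to checking the crossing condition at the two endpoints, namely $Xh\cdot Yh>0$ at both $(p,q)$ and $(-p,q)$, where $Xh=\langle X,\nabla h\rangle$ and $Yh=\langle Y,\nabla h\rangle$; here $Xh(\pm p,q)=\pm 2p$, so it remains to confirm that $Yh$ has the matching sign at each point, which constrains the admissible choices of $\delta,k,m,\varepsilon$. Once this holds, the concatenation of the horizontal chord (flow of $X$ inside $S^1$) with the exterior elliptic arc (flow of $Y$) closes into a single crossing periodic orbit, and Theorem A guarantees it is the only one, so $Z_{XY}$ has exactly one crossing limit cycle meeting $S^1$ in two points.

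The main obstacle I anticipate is the verification step rather than the algebra: the closing equations only guarantee that the two endpoints share the appropriate level sets, so I must separately ensure that the two arcs actually fit together into a crossing cycle, with the correct relative orientation and genuine transversal crossing at both endpoints, and that the elliptic arc lies outside $S^1$ while the chord lies inside. Tuning the free constants $\delta,k,m,\varepsilon$ to simultaneously keep $Y$ of center type, keep $|\varepsilon/l|<1$, and realise the crossing signs is the delicate part; after that, uniqueness is immediate from Theorem A, and the cycle can be displayed in a figure as in the previous corollaries.
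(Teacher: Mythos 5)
Your strategy is essentially the paper's: exhibit one member of $\mathfrak{X}^{S^1}_0$, reduce to the closing equations via the first integrals $H_1,H_2$, show the nontrivial solution is unique, and verify the crossing condition $Xh\cdot Yh>0$ at both intersection points. The paper does this with the fully concrete pair $X=(2,-1)$, $Y=(2-x+2y,\,-1-4x+y)$, solves the resulting system by Gr\"obner bases to get the single cycle through $\bigl(\pm 2/\sqrt5,\mp 1/\sqrt5\bigr)$, and checks the signs of $Xh$ and $Yh$ numerically. Your version is more structural and in one respect cleaner: with $X=(1,0)$ the first closing equation forces $q=s$, $r=-p$, and the factorization $H_2(p,q)-H_2(-p,q)=2p(lq-\varepsilon)$ makes the isolation of the cycle (hence the fact that it is a \emph{limit} cycle rather than one of a continuum, as in the first corollary) transparent without any elimination computation; invoking Theorem~A for the upper bound is legitimate since it is proved before this corollary. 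What keeps your write-up from being a complete proof of an existence statement is that you never actually fix $\delta,k,l,m,\varepsilon$: the claim that the center-type condition $km<-l^2$, the constraint $|\varepsilon/l|<1$, the two sign conditions on $Yh$, and the requirement that the exterior elliptic arc really joins $(p,q)$ to $(-p,q)$ outside $S^1$ with compatible orientation can all be met simultaneously is exactly the content of the corollary, and you defer it. The tuning does work (for instance $l=k=1$, $m=-2$, $\varepsilon=1/2$, $\delta=2$ satisfies the center and sign conditions, with $q=1/2$, $p=\sqrt3/2$), so the gap is one of execution rather than of idea; to match the paper's standard you would need to write down one such choice and carry out the sign and geometry checks explicitly, as the paper does for its example.
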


\begin{proof}
	Consider the piecewise smooth vector field $Z_{XY}$, where
	$$X(x,y)=(2,-1)\quad \mbox{and}\quad Y(x,y)=\left(2 -x+2 y,-1-4 x+y\right). $$
	The fields $X$ and $Y$ admit the following first integrals
	$$H_1(x,y)=x+2y\quad \mbox{and}\quad H_2(x,y)=x+2y+2 x^2-x y+y^2,$$
	respectively. 
	
	To study the existence of crossing limit cycles of the field $Z_{XY}$ we use the closing equations, with which we obtain the system of nonlinear equations
	$$\left\lbrace \def\arraystretch{1.5} \begin{array}{l}
		p+2q+2 p^2-p q+q^2=r+2s+2 r^2-r s+s^2,\\
		p+2q=r+2s,\\
		p^2+q^2=1,\\
		r^2+s^2=1,
	\end{array}\right. $$
	on which $(p,q,r,s)$ satisfies $(p,q)\neq (r,s)$. To solve this system, we used the Grobner base, obtaining the solutions
	$$\left(-\frac{2}{\sqrt{5}},\frac{1}{\sqrt{5}},\frac{2}{\sqrt{5}},-\frac{1}{\sqrt{5}} \right) \quad \mbox{and} \quad \left(\frac{2}{\sqrt{5}},-\frac{1}{\sqrt{5}} ,-\frac{2}{\sqrt{5}},\frac{1}{\sqrt{5}}\right), 
	$$
	that represent the same closed curve. 
	
	To complete, we check if these points are in the crossing region. Remember that $h(x,y)=x^2+y^2-1$, so $\nabla h=(2x,2y)$. Thus,
	$$Xh(x,y)=\langle \left (-2,1), (2x,2y) \right\rangle = 4x-2y,$$
	$$Yh(x,y)=\left\langle  \left( 2 -x+2 y,-1-4 x+y\right), (2x,2y) \right\rangle =4 x-2 y -2 x^2-4 x y+2 y^2.$$  
	Hence, 
	$$Xh(p,q)= -2 \sqrt{5}<0\quad \mbox{and}\quad Yh(p,q) = \frac{2}{5}-2 \sqrt{5}<0\;\; \Rightarrow\;\; Xh(p,q)Yh(p,q)>0,$$
	$$Xh(r,s) = 2 \sqrt{5}>0 \quad \mbox{and}\quad Yh(r,s) = \frac{2}{5}+2 \sqrt{5}>0\;\; \Rightarrow\;\; Xh(r,s)Yh(r,s)>0,$$
	which implies that $(p,q)$ and $(r,s)$ belong to the crossing region of the discontinuity manifold $S^1$.
	
	Therefore, $Z_{XY}$ has only one crossing limit cycle that intersects $S^1$ in two points, which we illustrate in Figure \ref{cc1ciclo} .
	
\end{proof}

\begin{figure}[h]
	\begin{subfigure}{0.33\textwidth}
		\centering
		\includegraphics[scale=0.3]{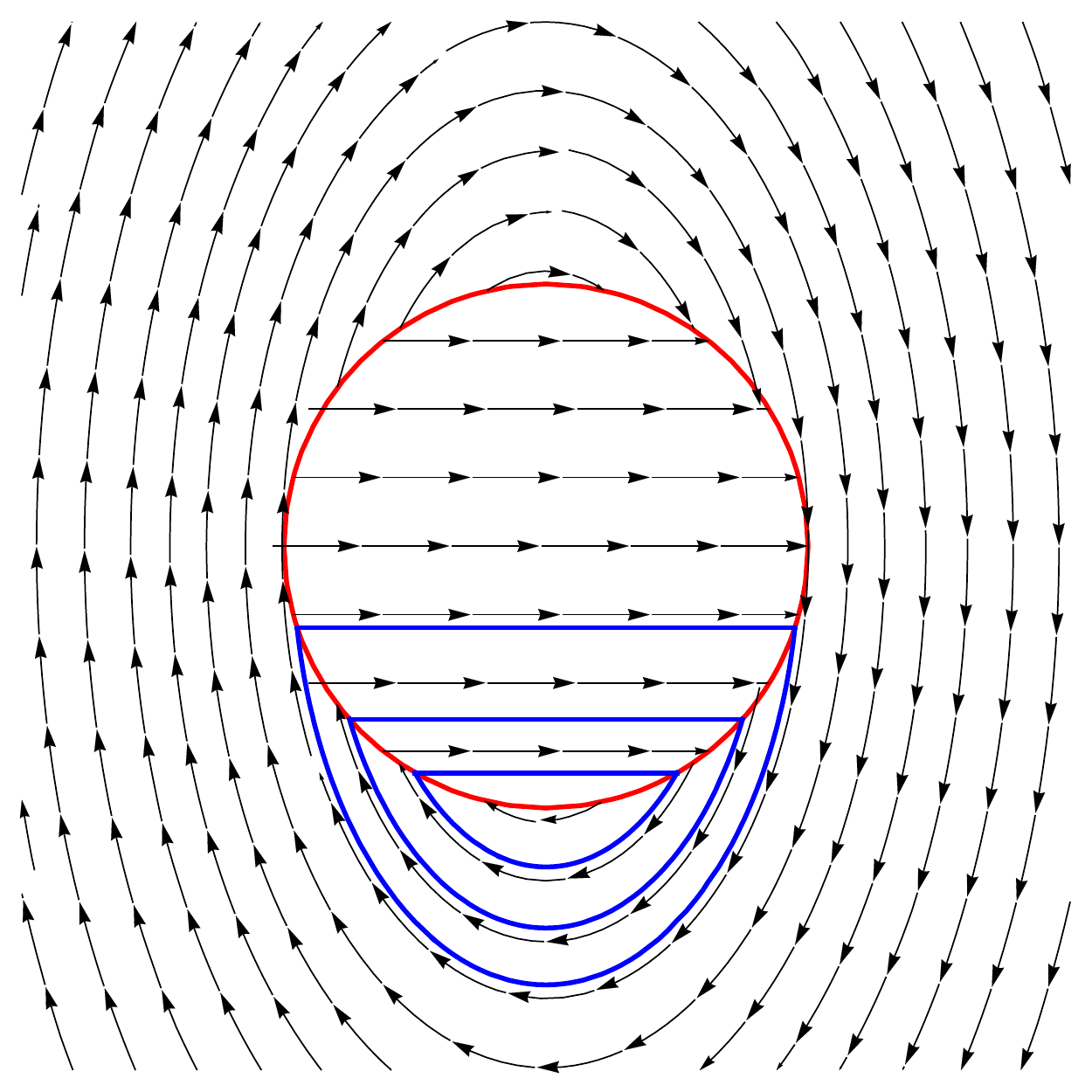}
		\caption{Periodic orbits}
		\label{ccorbper}
	\end{subfigure}%
	\begin{subfigure}{0.33\textwidth}
		\centering
		\includegraphics[scale=0.3]{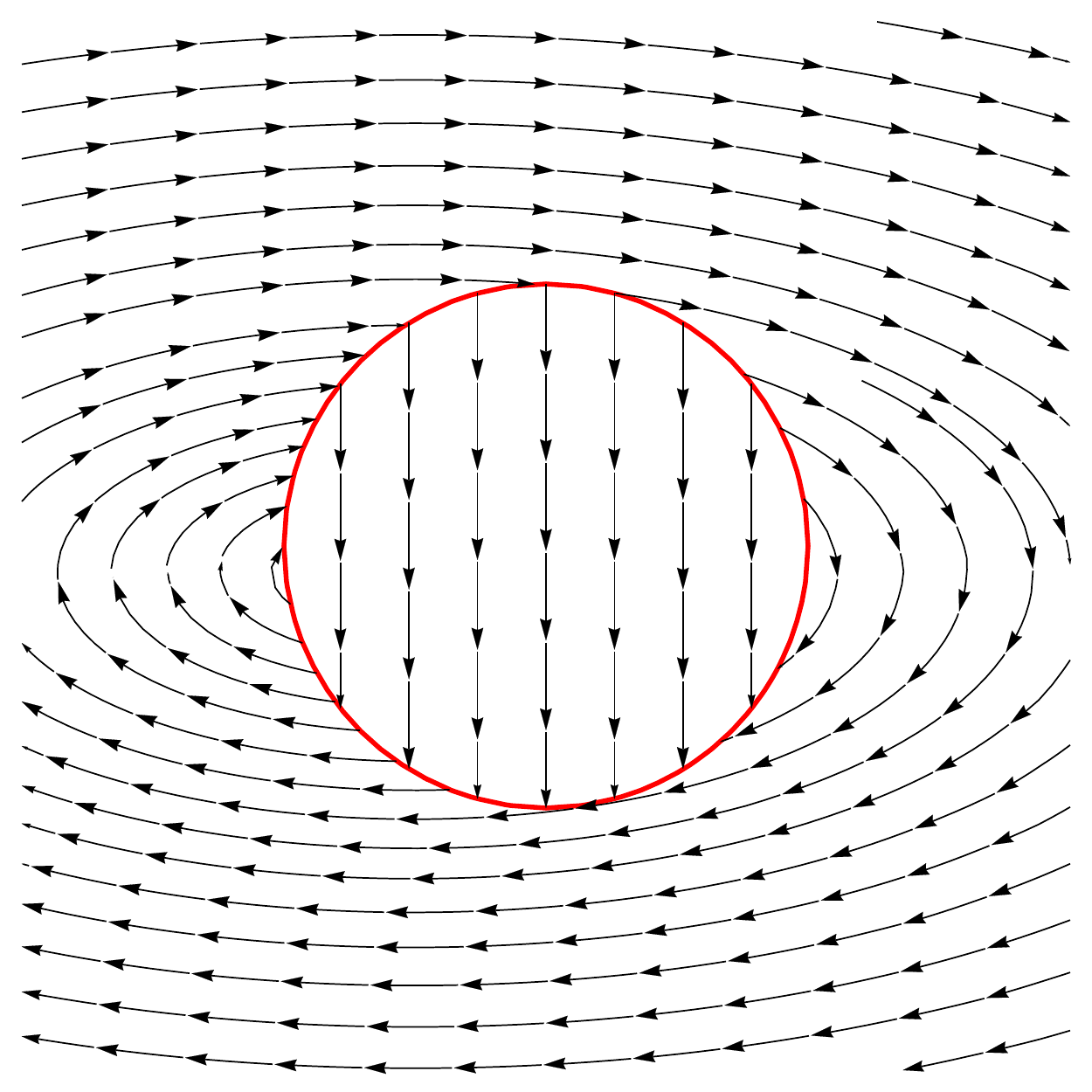}
		\caption{Without periodic orbits}
		\label{ccsemorb}
	\end{subfigure}%
	\begin{subfigure}{0.33\textwidth}
		\centering
		\includegraphics[scale=0.3]{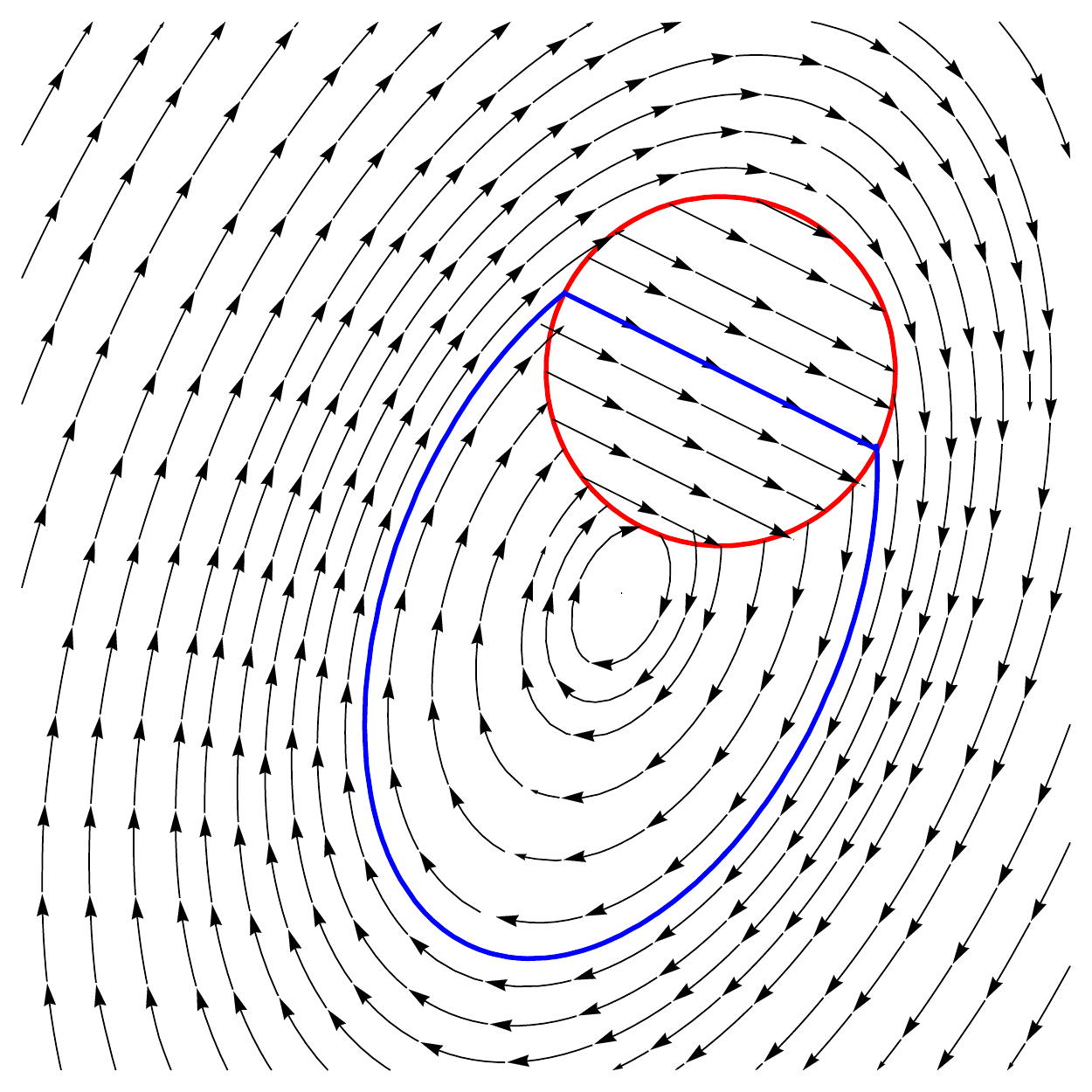}
		\caption{ One limit cycle}
		\label{cc1ciclo}
	\end{subfigure}%
	\caption{Phase portrait of vector fields $Z_{XY}\in \mathfrak{X}^{S^1}_0$.}
\end{figure}

\begin{corollary}\label{unst22} Let $\gamma$ the limit cycle constructed in the proof of Corollary \ref{st332}. Then $\gamma$ is an unstable limit cycle.
\end{corollary}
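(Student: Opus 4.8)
The plan is to show that the first-return (Poincar\'e) map $P$ associated with $\gamma$, defined on a small arc of $S^1$ around the crossing point $(p,q)=(-2/\sqrt5,1/\sqrt5)$, satisfies $|P'|>1$ at the fixed point corresponding to $\gamma$; this is precisely the condition for $\gamma$ to be a repelling, hence unstable, crossing limit cycle. Recall from the proof of Corollary \ref{st332} that at $(p,q)$ both fields point into the disk ($Xh(p,q),Yh(p,q)<0$), while at $(r,s)=(2/\sqrt5,-1/\sqrt5)$ both point out ($Xh(r,s),Yh(r,s)>0$). Hence $\gamma$ enters the region $h\le0$ at $(p,q)$, follows $X$ until $(r,s)$, and returns through the region $h\ge0$ following $Y$; accordingly I write $P=\pi_Y\circ\pi_X$, where $\pi_X$ is the transition along $X$ from a neighborhood of $(p,q)$ to a neighborhood of $(r,s)$ on $S^1$, and $\pi_Y$ the transition along $Y$ back.

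The key step is to compute $\pi_X'$ and $\pi_Y'$ from the first integrals $H_1(x,y)=x+2y$ and $H_2(x,y)=x+2y+2x^2-xy+y^2$ of Corollary \ref{st332}. Parametrize $S^1$ by arc length, $a(\theta)=(\cos\theta,\sin\theta)$, so that $a'(\theta)$ is the unit tangent $T$. The transition $\pi_X$ sends a point $a$ to the second intersection $b$ of the $X$-orbit (a level line of $H_1$) with $S^1$, so it is determined implicitly by $H_1(a)=H_1(b)$ with $b\in S^1$. Since $\operatorname{div}X=0$, the field is Hamiltonian, $X=J\nabla H_1$ with $J=\left(\begin{smallmatrix}0&1\\-1&0\end{smallmatrix}\right)$, and a direct computation gives the identity $\langle\nabla H_1(a),T\rangle=\langle X(a),N\rangle=\tfrac12\,Xh(a)$, where $N$ is the outward normal and I used $\nabla h=2N$ on $S^1$. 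Differentiating $H_1(a(\theta))=H_1(a(\phi))$ with $\phi=\pi_X(\theta)$ and substituting this identity yields $\pi_X'=Xh(a)/Xh(b)$; the global factor $\tfrac12$ and any sign convention in the identity cancel in the quotient. The same argument applied to $Y$ (also divergence free, $Y=J\nabla H_2$) gives $\pi_Y'=Yh(b)/Yh(\pi_Y(b))$, which at $b=(r,s)$, where $\pi_Y(r,s)=(p,q)$, equals $Yh(r,s)/Yh(p,q)$.

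Composing at the fixed point of $P$ produces the clean formula
\begin{equation*}
P'\big((p,q)\big)=\pi_Y'\big((r,s)\big)\cdot\pi_X'\big((p,q)\big)=\frac{Xh(p,q)\,Yh(r,s)}{Xh(r,s)\,Yh(p,q)},
\end{equation*}
which is the divergence-free specialization of the standard Filippov crossing-cycle formula, the accumulated factor $\exp(\oint\operatorname{div})$ being $1$ here. Plugging in the values already computed in Corollary \ref{st332}, namely $Xh(p,q)=-2\sqrt5$, $Xh(r,s)=2\sqrt5$, $Yh(p,q)=\tfrac25-2\sqrt5$ and $Yh(r,s)=\tfrac25+2\sqrt5$, gives $P'=(\tfrac25+2\sqrt5)/(2\sqrt5-\tfrac25)>1$, so $\gamma$ is unstable. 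The main obstacle is establishing the transition-map derivative rigorously --- in particular the identity relating the tangential component of the Hamiltonian gradient to the Lie derivative $Xh$, and tracking signs and the composition order so the final ratio is correctly $>1$ rather than its reciprocal; once this is in place the conclusion is immediate from the data of Corollary \ref{st332}.
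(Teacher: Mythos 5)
Your proof is correct, and it takes a genuinely different route from the paper. The paper argues geometrically: it perturbs the crossing point $p$ to a nearby point $p_\delta$ on $S^1$, follows the trajectory along the chord of $\mathcal L=\{x+2y=\mathrm{const}\}$ inside the disk and then along an arc of the ellipse $H_2=\mathrm{const}$ outside, and concludes that the return point $r_\delta$ is farther from $p$ than $p_\delta$ by appealing to the symmetry of the ellipses $\mathcal E_1,\mathcal E_2$ about their common major axis and to the angle between $\mathcal L$ and its perpendicular bisector. You instead differentiate the implicit relations $H_i(a(\theta))=H_i(a(\phi))$ defining the two transition maps along $S^1$, use the identity $\langle\nabla H_i,T\rangle=\tfrac12\,Xh$ (valid because $X=J\nabla H_1$, $Y=J\nabla H_2$ with the same rotation $J$, the factor and sign cancelling in each quotient), and obtain the multiplier
\begin{equation*}
P'\big((p,q)\big)=\frac{Xh(p,q)\,Yh(r,s)}{Xh(r,s)\,Yh(p,q)}=\frac{\tfrac25+2\sqrt5}{2\sqrt5-\tfrac25}\approx 1.20>1 .
\end{equation*}
I verified the sign bookkeeping: the quoted signs of $Xh$ and $Yh$ at $(p,q)$ and $(r,s)$ match Corollary \ref{st332}, the composition order $\pi_Y\circ\pi_X$ matches the crossing direction, and the resulting $P'$ is positive and greater than one, so the fixed point is repelling. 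Your argument is the more robust of the two: it is fully quantitative (it produces the actual multiplier rather than only its sign relative to $1$), it does not depend on the delicate symmetry claim about the ellipses on which the paper's proof rests, and it yields a general stability criterion for any divergence-free crossing cycle through two transversal crossing points that could equally be applied to the other examples in the paper (e.g.\ the two cycles of Corollary \ref{sc2cycle}). What the paper's construction buys in exchange is a concrete geometric picture (Figure \ref{figunstable}) of how nearby trajectories spiral away from $\gamma$.
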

\begin{proof} Let $\mathcal E_1$ be the ellipsis $$x + 2 y + 2 x^2 - xy + y^2=11/5$$ and $\mathcal L$ be the line $$x+2y=0.$$

The limit cycle $\gamma$ is obtained by the union of the part of $\mathcal E_1$ that lies outside the unitary circle with the part of the line $\mathcal L$ that lies inside the unitary circle. These curves intersect the circle at the points $p=(-2/\sqrt{5},1/\sqrt{5})$ and $q=(2/\sqrt{5},-1/\sqrt{5})$, that are in the crossing region, producing the limit cycle (the positive $t$ direction is clockwise). Now we prove that this limit cycle is unstable by using an elementary geometric construction.

The major axis of $\mathcal E_1$ is the line $\mathcal I$ given by
$$y= \dfrac{7 x-9 \sqrt{2}+13}{7 \left(\sqrt{2}-1\right)}$$
and  the perpendicular bisector of the line $\mathcal L$ is the line $\mathcal J$ given by $$y-2x=0.$$

A point $p_\delta$ in a small neighborhood of $p$ in $S^1$ can be parametrized as
$$p_\delta=\left(\frac{\delta -\frac{2}{\sqrt{5}}}{\sqrt{\left| \delta -\frac{2}{\sqrt{5}}\right| ^2+\frac{1}{5}}},\frac{1}{\sqrt{5} \sqrt{\left| \delta -\frac{2}{\sqrt{5}}\right| ^2+\frac{1}{5}}}\right).$$

Now we construct the positive trajectory of $Z_{XY}$ (see proof of Corollary \ref{st332}) by $p_\delta$. The dynamics of $Z_{XY}$ is the dynamics of $X$, that has first integral $$H_1(x,y)=x+2y$$ and points inward $S^1$.

By solving $H_1(x,y)=H_1(p_\delta)$ for $(x,y)\in S^1$ and $(x,y)\neq p_\delta$ we obtain the point $q_\delta$, in a neighborhood of $q$ in $S^1$. The segment $\overline{p_\delta q_\delta}$ is part of the trajectory of $Z_{XY}$ through $p_\delta$ and to continue the trajectory we have to switch to the vector field $Y$, that has first integral $$H_2(x,y)=x+2y+2 x^2-x y+y^2.$$

By solving $H_2(x,y)=H_2(q_\delta)$ for $(x,y)\in S^1$ and $(x,y)\neq q_\delta$ we obtain the point $r_\delta$, in a neighborhood of $p_\delta$ in $S^1$. The trajectory from $q_\delta$ to $r_\delta$ is a arc of the ellipsis $\mathcal E_2$ given by $H_2(x,y)=H_2(q_\delta)$ and the Poincaré map near $p$ is given by $$p_\delta\mapsto r_\delta.$$ See Figure \ref{figunstable} for a sketch of the construction.

To prove that $\gamma$ is unstable it is suffice to prove that $$||r_\delta-p||>||p_\delta-p||.$$ Note that the major axis of ellipsis $\mathcal E_1$ and $\mathcal E_2$ are the same (line $\mathcal I$), and due to the symmetry of the ellipsis $\mathcal E_1$ and $\mathcal E_2$ with respect to their axis, and using the fact that angle $\theta$ between $\mathcal{L}$ and $\mathcal{J}$ is positive (measured counterclockwise), the point $r_\delta$ is farther from $p$ than is $p_\delta$.

\begin{figure}[h]
	\centering
	\begin{overpic}[scale=0.5,unit=1mm]{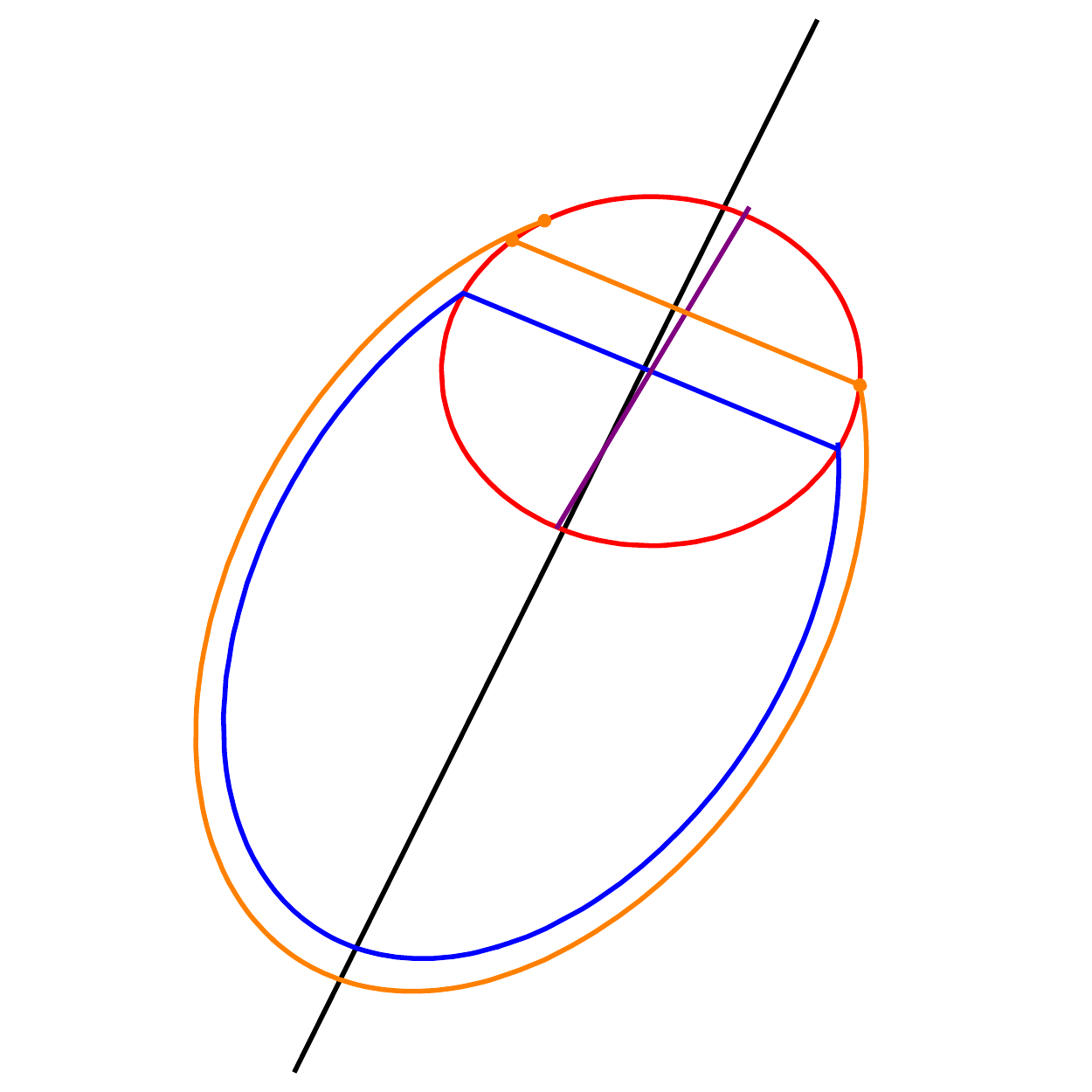}
		\put(22,30){$\mathcal E_1$}
		\put(14,20){$\mathcal E_2$}
		\put(65,57){$\mathcal L$}
		\put(48,36){$\mathcal I$}
		\put(66,74){$\mathcal J$}
		\put(46,74){\footnotesize{$p_{\delta}$}}
		\put(50,82){\footnotesize{$r_{\delta}$}}
		\put(80,65){\footnotesize{$q_{\delta}$}}
	\end{overpic}
	\caption{Unstable limit cycle in Corollaries \ref{st332} and \ref{unst22}.}
	\label{figunstable}
\end{figure}

\end{proof}

\subsection{Constant-saddle case}

In this subsection, we show that there are vector fields in $\mathfrak{X}^{S^1}_1$ that admit an infinite number of crossing periodic orbits, vector fields that do not have periodic orbits, and vector fields that admit only one crossing limit cycle. To do this, we are going to present examples of each case.

\begin{corollary}
	There is $Z_{XY} \in \mathfrak{X}^{S^1}_1$ that admits an infinite number of crossing periodic orbits that intersect $S^1$ in two points.
\end{corollary}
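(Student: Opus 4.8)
The plan is to produce an explicit example, following the same strategy that worked in the constant-center case: I will choose $X$ and $Y$ so that the whole configuration is invariant under a reflection of $\mathbb{R}^2$ that fixes $S^1$ and both first integrals, which automatically forces the closing equations to carry a one-parameter family of solutions. Concretely, I would take the constant field $X(x,y)=(1,0)$ and the linear field $Y(x,y)=(-2y,-2x)$. One checks immediately that $Y$ has a saddle at the origin (its linear part has eigenvalues $\pm 2$) and that $\mathrm{div}\,Y=0$, so that $Z_{XY}\in\mathfrak{X}^{S^1}_1$; their first integrals are $H_1(x,y)=y$ and $H_2(x,y)=x^2-y^2$, respectively.

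With these choices, the closing equations of Proposition \ref{clossingequation} reduce to
\begin{equation*}
	\left\lbrace
	\begin{array}{l}
		q=s,\\
		p^2-q^2=r^2-s^2,\\
		p^2+q^2=1,\\
		r^2+s^2=1,
	\end{array}
	\right.
\end{equation*}
subject to $(p,q)\neq(r,s)$. The key observation is that the reflection $(x,y)\mapsto(-x,y)$ preserves $S^1$, $H_1$ and $H_2$; hence for every $(p,q)\in S^1$ the quadruple $(p,q,r,s)=(p,q,-p,q)$ solves the system, and the constraint $(p,q)\neq(r,s)$ holds precisely when $p\neq 0$. I would also verify this directly from the equations: $q=s$ together with $p^2+q^2=r^2+s^2=1$ forces $r^2=p^2$, the second equation is then automatic, and the only nontrivial branch is $r=-p$. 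This already exhibits a whole arc of solutions parametrised by $(p,q)\in S^1$ with $p\neq 0$, i.e.\ infinitely many distinct pairs of crossing points.

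It remains to argue that each such pair genuinely bounds a crossing periodic orbit. The inner piece of the candidate orbit is the horizontal segment $y=q$, $x\in[-p,p]$, which lies in the closed disk since $x^2+q^2\le p^2+q^2=1$; the outer piece is the arc of the hyperbola $x^2-y^2=p^2-q^2$ joining $(p,q)$ to $(-p,q)$, which is symmetric under the reflection above (that reflection reverses the flow of $Y$ and hence maps this arc onto itself). I expect the main point to check here to be the Filippov classification of the two contact points: computing $Xh$ and $Yh$ at $(\pm p,q)$ with $\nabla h=(2x,2y)$ gives $Xh(\pm p,q)=\pm 2p$ and $Yh(\pm p,q)=\mp 8pq$, so that $Xh\cdot Yh=-16p^2q>0$ at \emph{both} points exactly when $q<0$. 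Thus for the continuum of points with $q<0$ both intersections lie in the crossing region $\Sigma^c$, the two arcs concatenate into a genuine crossing periodic orbit rather than a sliding trajectory, and these orbits are nested and mutually distinct. Consequently $Z_{XY}$ possesses infinitely many crossing periodic orbits intersecting $S^1$ in two points, and I would close by illustrating the nested family in a figure, exactly as in the constant-center example.
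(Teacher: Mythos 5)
Your overall strategy is the same as the paper's (exhibit an explicit example and show that the closing equations of Proposition \ref{clossingequation} admit a one-parameter family of solutions), and you even go further than the paper by checking the crossing condition $Xh\cdot Yh>0$. However, your specific example does not work, because solutions of the closing equations are only \emph{candidates} for periodic orbits: the equations record that $(p,q)$ and $(r,s)$ lie on the same level set of $H_1$ and on the same level set of $H_2$, but they do not guarantee that the two points lie on the same \emph{connected component} of the level set of $H_2$, nor that the arc of the $Y$-trajectory joining them lies in the region $\{h\ge 0\}$ where $Y$ actually governs the dynamics, nor that this arc is traversed in the direction compatible with the inner $X$-arc. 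With $Y(x,y)=(-2y,-2x)$ the saddle sits at the origin, i.e.\ at the center of $S^1$, and this kills these requirements at once. The trajectories of $Y$ are the branches of the hyperbolas $x^2-y^2=c$ centered at the origin; each branch is a convex curve, so its intersection with the convex closed unit disk is a single arc. Concretely, for $c=p^2-q^2=1-2q^2>0$ the points $(p,q)$ and $(-p,q)$ lie on \emph{different} branches, hence on different trajectories of $Y$, and no orbit of $Y$ joins them at all; for $c<0$ they lie on the same branch $y=-\sqrt{x^2-c}$, but along that branch $x^2+y^2=2x^2+2q^2-1\le 1$ exactly for $|x|\le p$, so the arc joining the two points lies entirely inside the closed disk, where the dynamics is given by $X$, not $Y$. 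The actual forward trajectory of $Z_{XY}$ leaving the exit point $(p,q)$, where $\dot x=-2q>0$ and $\dot y=-2p<0$, runs down the exterior ray of the branch to infinity and never returns to $S^1$. So your $Z_{XY}$ has \emph{no} crossing periodic orbits, even though the closing equations have a continuum of solutions all lying in the crossing region; your reflection-symmetry remark only shows that the connecting arc, when it exists, is symmetric, not that it lies outside the disk.

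The repair is to move the saddle of $Y$ away from the center of the circle, as the paper does with $Y(x,y)=(4-2y,-6x)$, whose saddle is at $(0,2)$: then for a range of levels the hyperbola branch through $(\pm p,q)$ has its vertex outside the closed disk and the joining arc can stay in $\{h\ge 0\}$. Even then one must verify that $Y$ traverses that exterior arc from the exit point of the inner $X$-segment back to its entry point (for $X=(1,0)$ the inner flow goes left to right, so the exterior arc must go right to left). That exterior-arc-plus-orientation verification is the step genuinely missing from your argument, and it is precisely where your choice of $Y$ fails.
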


\begin{proof}
	
	Consider the piecewise smooth vector field $Z_{XY}$, on which $$X(x,y)=(1, 0)\quad \mbox{and} \quad Y(x,y)=(4 - 2 y, -6 x).$$
	The fields $X$ and $Y$ have the following first integrals  
	$$H_1(x,y)=y\quad \mbox{and}\quad H_2(x,y)=3x^2 + 4y - y^2,$$
	respectively.
	
	To study the existence of crossing limit cycles of the field $Z_{XY}$ we use the closing equations, with which we obtain the system of nonlinear equations
	$$\left\lbrace \def\arraystretch{1.5} \begin{array}{l}
		q=s,\\
		3p^2 + 4q - q^2=3r^2 + 4s - s^2,\\
		p^2+q^2=1,\\
		r^2+s^2=1,
	\end{array}\right. $$
	where $(p,q,r,s)$ satisfies $(p,q)\neq (r,s)$. From the closing equations we have $q=s$. 
	Thus, we get that $p=\pm r$. In this way, $(p,q,r,s)=(p,q,\pm p, q)$, and since $(p,q)\neq (r,s)$ we cannot have $p=r$. So, $(p,q,r,s)=(p,q,-p,q)$.
	
	Therefore, $Z_{XY}$ admits an infinite number of crossing periodic orbits that intersect $S^1$ in two points, which we illustrate in Figure \ref{csorbper}.
\end{proof}

\begin{corollary}
	There is $Z_{XY} \in \mathfrak{X}^{S^1}_1$ that does not have crossing periodic orbits that intersect $S^1$ in two points.
\end{corollary}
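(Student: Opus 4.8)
The plan is to mirror the construction used in the constant-center no-orbit corollary above: I would exhibit one explicit member of $\mathfrak{X}^{S^1}_1$ and show, by analyzing its closing equations, that every solution of the closing system satisfies $(p,q)=(r,s)$, so that no pair of \emph{distinct} crossing points on $S^1$ can bound a crossing periodic orbit. The guiding heuristic comes from contrasting this with the previous corollary, where $Z_{XY}$ has infinitely many crossing periodic orbits: there $H_1=y$ fixed the second coordinate while $H_2=3x^2+4y-y^2$ involved the free coordinate $x$ only through $x^2$, producing the sign ambiguity $p=\pm r$ and hence a one-parameter family of orbits. To destroy periodic orbits I want instead an example in which, after the linear equation coming from $H_1$ and the two circle constraints are imposed, the surviving relation from $H_2$ is \emph{linear} in the remaining coordinate and forces equality rather than a choice of sign.

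Concretely, I would take the constant field $X(x,y)=(0,-1)$, so that by Proposition \ref{clossingequation} its first integral is $H_1(x,y)=x$, together with a zero-divergence linear saddle whose first integral carries a genuine linear term in $y$, for instance $Y(x,y)=(4+y,x)$. The linear part of $Y$ has trace $0$ and determinant $-1$, so $Y$ is of saddle type with its singularity at $(0,-4)$ lying outside $S^1$, placing $Z_{XY}$ in $\mathfrak{X}^{S^1}_1$; Proposition \ref{clossingequation} then gives $H_2(x,y)=4y-\tfrac{x^2}{2}+\tfrac{y^2}{2}$. The closing system for two points $(p,q),(r,s)\in S^1$ is
\[
\left\{ \begin{array}{l}
p=r,\\
4q-\tfrac{p^2}{2}+\tfrac{q^2}{2}=4s-\tfrac{r^2}{2}+\tfrac{s^2}{2},\\
p^2+q^2=1,\\
r^2+s^2=1.
\end{array}\right.
\]

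From here the reduction is routine: the first equation gives $p=r$, hence $p^2=r^2$, and the two circle equations then force $q^2=1-p^2=1-r^2=s^2$. Substituting $p^2=r^2$ and $q^2=s^2$ into the second equation cancels both quadratic terms and leaves $4q=4s$, i.e. $q=s$. Thus every solution has $(p,q)=(r,s)$, there is no admissible pair of distinct crossing points, and $Z_{XY}$ has no crossing periodic orbit meeting $S^1$ in two points; note that no crossing-region verification is needed here, since no candidate orbit exists in the first place. I expect the only real work to lie in the \emph{selection} of $Y$ rather than in the verification: one must choose the saddle so that $H_2$ retains a nonzero linear term (a nonzero $\delta$ in the notation of Proposition \ref{clossingequation}) in the coordinate left free by $H_1$, while ensuring that no spurious branch reintroduces a second crossing point — in particular the factor $\delta+lp$ that would appear upon substitution if the cross term $lxy$ were present — all the while keeping $\mathrm{div}\,Y=0$ and the linear part hyperbolic of saddle type. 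Taking $l=0$ as above eliminates that troublesome factor and makes the forcing $q=s$ unconditional, which is the crux of the example.
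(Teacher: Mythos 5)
Your proposal is correct and follows essentially the same route as the paper: exhibit one explicit $Z_{XY}\in\mathfrak{X}^{S^1}_1$ with $X$ constant (first integral $H_1=x$), write down the closing system from Proposition \ref{clossingequation}, and show that $p=r$ together with the circle constraints collapses the $H_2$-equation to $q=s$, so no distinct pair of crossing points exists. The only difference is the particular saddle chosen ($Y=(4+y,x)$ versus the paper's $Y=(3-2y,1-4x)$), and both examples work for the same structural reason you identify.
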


\begin{proof}
	
	Consider the piecewise smooth vector field $Z_{XY}$, on which $$X(x,y)=(0,-1)\quad \mbox{and} \quad Y(x,y)=(3 - 2y, 1 - 4x).$$
	The fields $X$ and $Y$ have the following first integrals
	$$H_1(x,y)=x\quad \mbox{and}\quad H_2(x,y)=-x + 2x^2 + 3y - y^2,$$
	respectively.
	
	To study the existence of crossing limit cycles of the field $Z_{XY}$ we use the closing equations, with which we get the system of nonlinear equations
	$$\left\lbrace \def\arraystretch{1.5} \begin{array}{l}
		p=r,\\
		-p + 2p^2 + 3q - q^2=-r + 2r^2 + 3s - s^2,\\
		p^2+q^2=1,\\
		r^2+s^2=1,
	\end{array}\right. $$
	where $(p,q,r,s)$ satisfies $(p,q)\neq (r,s)$. From the closing equations we have $p=r$, $q^2=1-p^2$ and $s^2=1-r^2$. Substituting this in the second equation of the system we obtain $p=s$.
	That way, $(p,q,r,s)=(p,q,p,q)$. 
	
	Therefore, since $(p,q) \neq (r,s)$, $Z_{XY}$ does not have crossing periodic orbits that intersect $S^1$ in two points, which we illustrate in Figure \ref{cssemorb}.
	
\end{proof}

\begin{corollary}
	There is $Z_{XY} \in \mathfrak{X}^{S^1}_1$ that has exactly one crossing limit cycle that intersects $S^1$ in two points.
\end{corollary}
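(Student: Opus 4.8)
The plan is to exhibit an explicit pair $(X,Y)$ realizing exactly one crossing limit cycle, mirroring the construction used in Corollary \ref{st332} for the constant--center case, but now taking $Y$ of saddle type. First I would set $X(x,y)=(\eta,\zeta)$ to be a nonzero constant field, so that by Proposition \ref{clossingequation} its first integral is the linear function $H_1(x,y)=-\zeta x+\eta y$; the trajectory of $Z_{XY}$ interior to $S^1$ is then a straight chord. For $Y(x,y)=(\delta+lx+ky,\varepsilon+mx+ny)$ I would impose $div(Y)=0$, i.e.\ $n=-l$, so that its linear part $\left(\begin{smallmatrix} l & k \\ m & -l\end{smallmatrix}\right)$ has eigenvalues $\pm\sqrt{l^2+km}$; choosing the coefficients so that $l^2+km>0$ forces these to be real and of opposite sign, hence $Y$ is of saddle type. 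Its first integral $H_2(x,y)=-\varepsilon x+\delta y+lxy-\tfrac{m}{2}x^2+\tfrac{k}{2}y^2$ is then an indefinite quadratic, so the trajectory exterior to $S^1$ is an arc of a hyperbola.

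Next I would write down the closing system furnished by Proposition \ref{clossingequation}: the linear equation $H_1(p,q)=H_1(r,s)$, the quadratic equation $H_2(p,q)=H_2(r,s)$, together with $p^2+q^2=1$ and $r^2+s^2=1$, and solve it with a Gr\"obner basis for a judicious numerical choice of the coefficients $\eta,\zeta,\delta,\varepsilon,l,k,m$. As in the earlier corollaries I expect the elimination to produce exactly two real solution tuples, $(p,q,r,s)$ and $(r,s,p,q)$, which describe the \emph{same} unoriented closed curve; this collapses the count to a single candidate cycle.

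Finally I would confirm that the two intersection points genuinely lie in the crossing region $\Sigma^c$. Recalling $\nabla h=(2x,2y)$, I would evaluate $Xh$ and $Yh$ at $(p,q)$ and at $(r,s)$ and check $Xh\cdot Yh>0$ at each point, exactly as in Corollary \ref{st332}; this rules out sliding and guarantees that the concatenation of chord and arc is an admissible crossing trajectory. The main difficulty here is geometric rather than algebraic: since $H_2=\mathrm{const}$ cuts out a \emph{two-branched} hyperbola, I must ensure that the exterior arc joining the two boundary points stays on a single branch and does not meet the saddle point or run off to infinity before closing. I would arrange this by placing the saddle and fixing the level of $H_2$ so that the relevant branch lies wholly outside $S^1$ between the two crossing points. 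Once this is checked, the closed curve is a genuine crossing limit cycle, and its uniqueness is immediate from the Gr\"obner computation having yielded only that one curve.
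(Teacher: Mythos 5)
Your proposal follows essentially the same route as the paper: the paper takes the explicit example $X(x,y)=(-1,-\tfrac13)$, $Y(x,y)=(5+2x-2y,\,1-6x-2y)$ (a zero-divergence saddle), forms the closing system from the first integrals $H_1(x,y)=\tfrac{x}{3}-y$ and $H_2(x,y)=-x+5y+3x^2+2xy-y^2$, solves it by Gr\"obner basis to find exactly two solution tuples describing one closed curve, and verifies $Xh\cdot Yh>0$ at both intersection points. To turn your plan into a proof you only need to supply such a concrete coefficient choice and carry out the computation; your extra remark about keeping the exterior arc on a single hyperbola branch is a sensible check that the paper handles implicitly via its phase portrait.
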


\begin{proof}
	
	Consider the piecewise smooth vector field $Z_{XY}$, on which
	$$X(x,y)=\left( -1,-\frac{1}{3}\right) \quad \mbox{and}\quad Y(x,y)=\left(5+2 x-2 y,1-6 x-2 y\right). $$
	The fields $X$ and $Y$ have the following first integrals
	$$H_1(x,y)=\frac{x}{3}-y\quad \mbox{and}\quad H_2(x,y)=-x+5y+3x^2+2xy-y^2,$$
	respectively. 
	
	To study the existence of crossing limit cycles of the field $Z_{XY}$ we use the closing equations, with which we obtain the system
	$$\left\lbrace \def\arraystretch{1.5} \begin{array}{l}
		-p+5q+3p^2+2pq-q^2=-r+5s+3r^2+2rs-s^2,\\
		\frac{p}{3}-q=\frac{p}{3}-q,\\
		p^2+q^2=1,\\
		r^2+s^2=1,
	\end{array}\right. $$
	where $(p,q,r,s)$ satisfies $(p,q)\neq (r,s)$. To solve this system, we used the Grobner base, rendering the solutions
	$$\left(\frac{1}{20} \left(-3 \sqrt{15}-5\right),\frac{1}{4}\left(3 -\sqrt{\frac{3}{5}}\right) ,\frac{1}{20} \left(3 \sqrt{15}-5\right),\frac{1}{20} \left(\sqrt{15}+15\right) \right),$$  
	$$\left(\frac{1}{20} \left(3 \sqrt{15}-5\right),\frac{1}{20} \left(\sqrt{15}+15\right),\frac{1}{20} \left(-3 \sqrt{15}-5\right),\frac{1}{4}\left(3 -\sqrt{\frac{3}{5}}\right) \right),$$  
	that define the same closed curve. 
	
	To complete, we check if these points are in the crossing region. Remember that $h(x,y)=x^2+y^2-1$, then $\nabla h=(2x,2y)$. So,
	$$Xh(x,y)=\left\langle  \left( -1,-\frac{1}{3}\right), (2x,2y) \right\rangle = -2x-\frac{2y}{3},$$
	$$Yh(x,y)=\left\langle  \left(5+2 x-2 y,1-6 x-2 y\right), (2x,2y) \right\rangle =10 x+2 y+4 x^2-16 x y-4 y^2.$$  
	Thus, 
	$$Xh(p,q)= \sqrt{\frac{5}{3}}>0 \quad \mbox{and}\quad Yh(p,q) = \frac{3}{5} \left(\sqrt{15}-1\right)>0\;\; \Rightarrow\;\; Xh(p,q)Yh(p,q)>0,$$
	$$Xh(r,s) =-\sqrt{\frac{5}{3}}<0 \quad \mbox{and}\quad Yh(r,s) = -\frac{3}{5} \left(\sqrt{15}+1\right)<0\;\; \Rightarrow\;\; Xh(r,s)Yh(r,s)>0,$$
	that is, $(p,q)$ and $(r,s)$ belong to the crossing region of the discontinuity manifold $S^1$.
	
	Therefore, $Z_{XY}$ has only one crossing limit cycle that intersects $S^1$ in two points, which we illustrate in Figure \ref{cs1ciclo} .
	
\end{proof}

\begin{figure}[h]
	\begin{subfigure}{0.33\textwidth}
		\centering
		\includegraphics[scale=0.3]{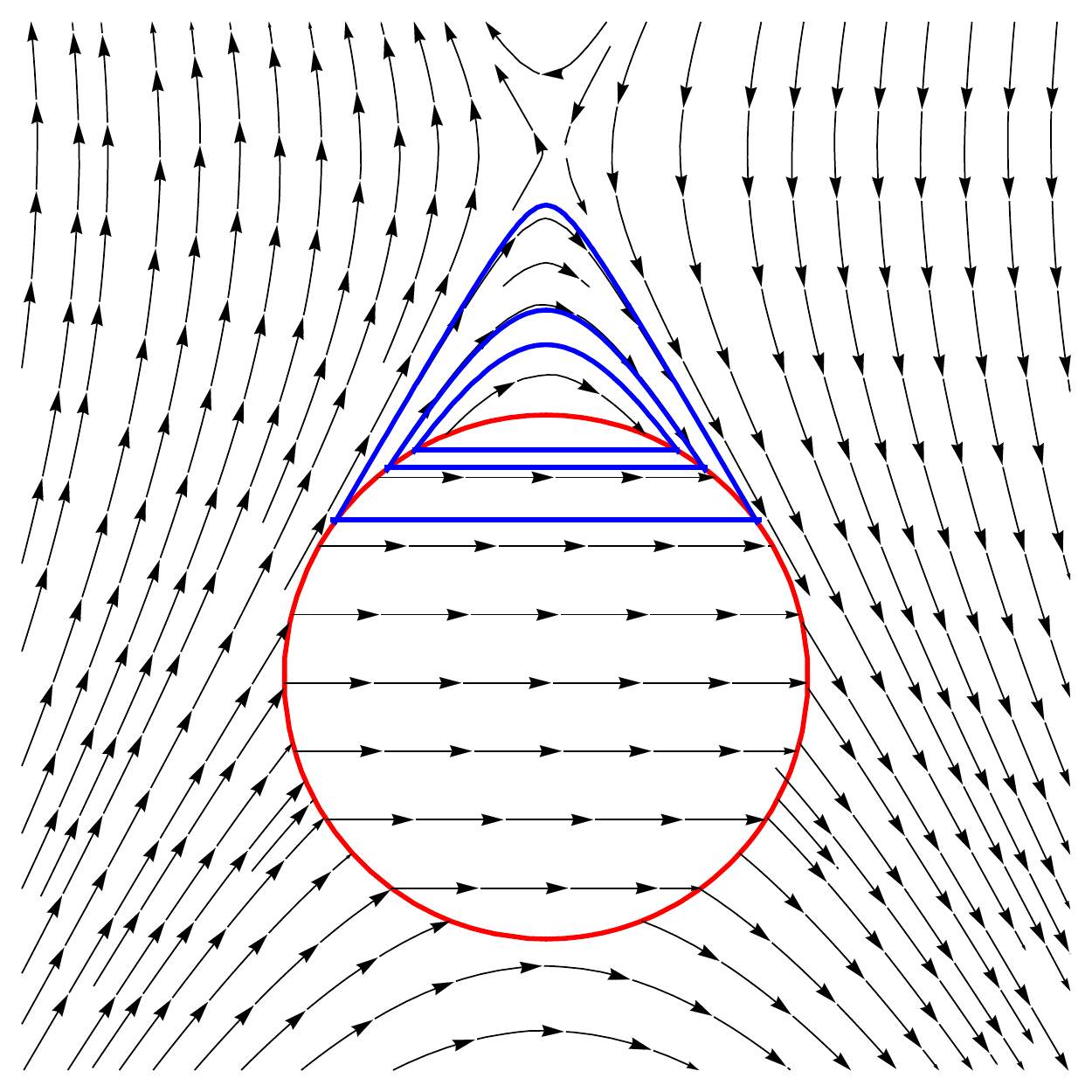}
		\caption{Periodic orbits}
		\label{csorbper}
	\end{subfigure}%
	\begin{subfigure}{0.33\textwidth}
		\centering
		\includegraphics[scale=0.3]{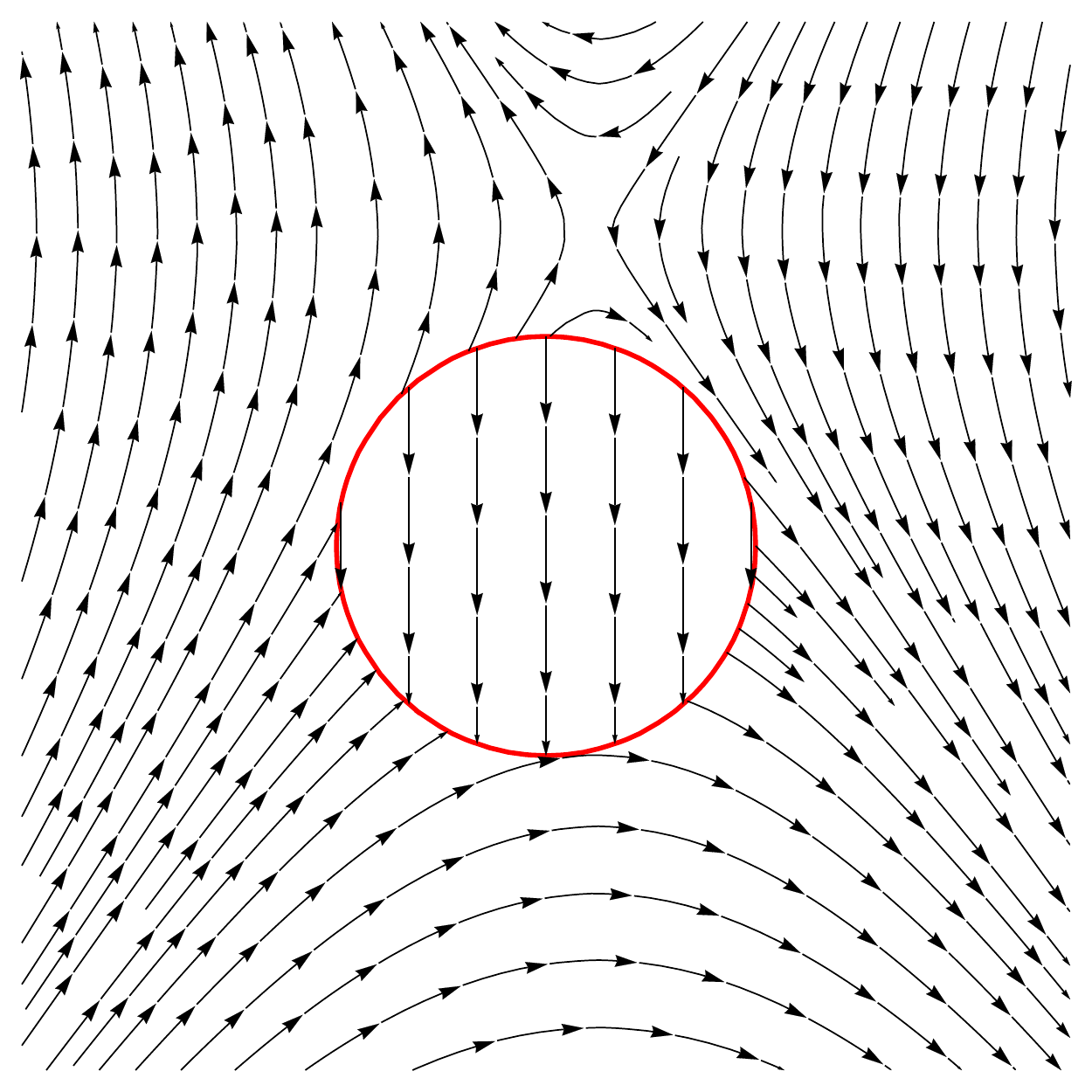}
		\caption{Without periodic orbits}
		\label{cssemorb}
	\end{subfigure}%
	\begin{subfigure}{0.33\textwidth}
		\centering
		\includegraphics[scale=0.3]{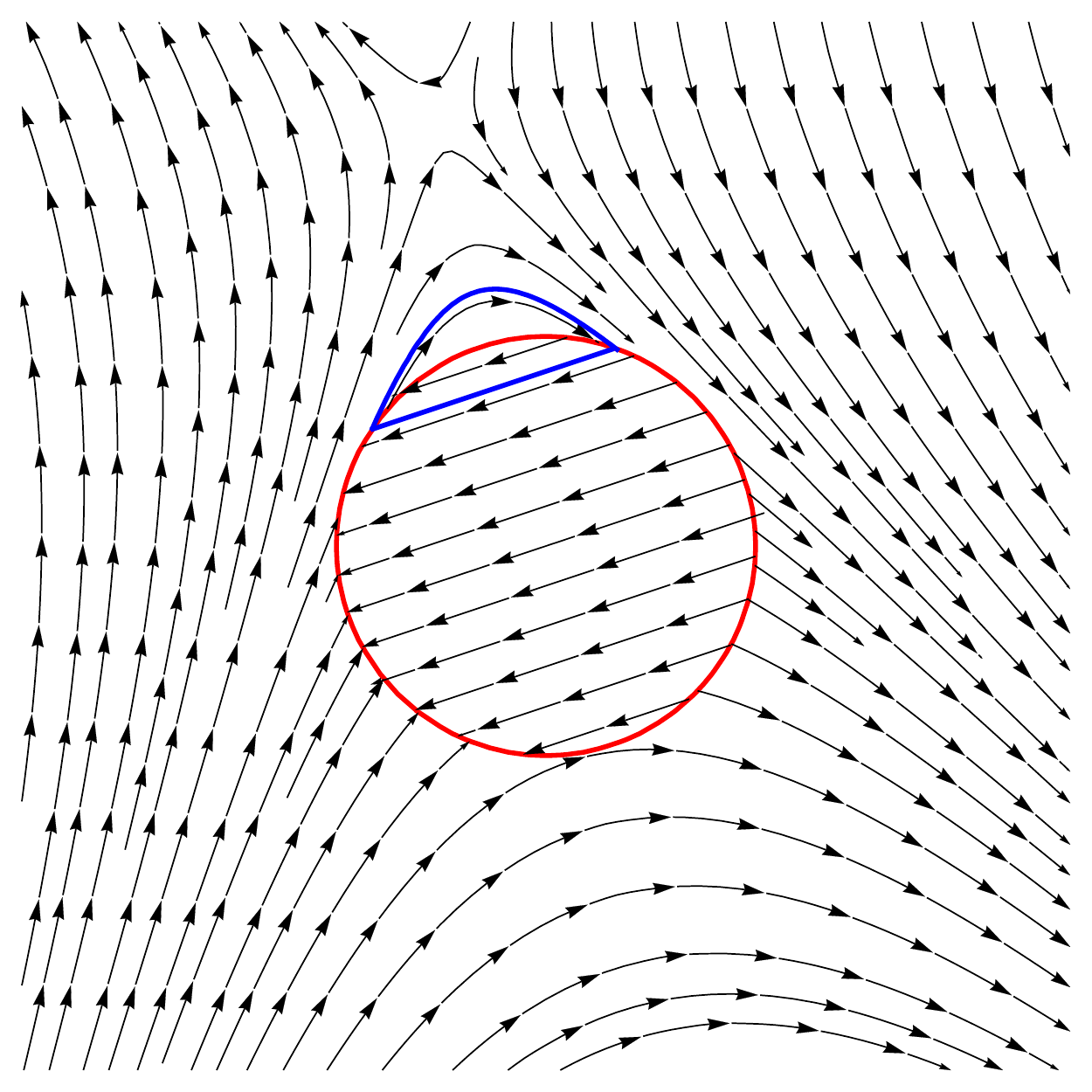}
		\caption{ One limit cycle}
		\label{cs1ciclo}
	\end{subfigure}%
	\caption{Phase portrait of vector fields $Z_{XY}\in \mathfrak{X}^{S^1}_1$.}
\end{figure} 

\section{Periodic orbits when the singularities are saddle or center}

In this section we study the existence of crossing limit cycles of piecewise smooth vector fields $Z_{XY}\in \mathfrak{X}^{S^1}_2 \cup \mathfrak{X}^{S^1}_3 \cup \mathfrak{X}^{S^1}_4$. We are going to study each case separately, but first we demonstrate the following theorem.

\begin{theorem}\label{twocycle}
	Consider $X(x,y)=(\eta+ax+by,\zeta+cx+dy)$, $Y(x,y)=(\delta+lx+ky,\varepsilon +mx+ny)$ and $divX=divY=0$. Then the maximum number of crossing limit cycles of the piecewise smooth vector field $Z_{XY}$ that intersect $S^1$ in two points is less than or equal to two.
\end{theorem}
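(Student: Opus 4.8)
The plan is to convert the four closing equations of Proposition~\ref{clossingequation} into a statement about the intersection of two conics and then bound the number of intersection points.

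First I would read off the geometry behind the closing equations. A crossing limit cycle meeting $S^1$ at two distinct points $\mathbf p=(p,q)$ and $\mathbf r=(r,s)$ forces $\mathbf p,\mathbf r$ to lie on a common level set of the first integral $H_1$ of $X$ (the arc interior to $S^1$) and on a common level set of the first integral $H_2$ of $Y$ (the arc exterior to $S^1$); here the hypothesis $\mathrm{div}\,X=\mathrm{div}\,Y=0$ is exactly what guarantees, via the computation in Proposition~\ref{clossingequation}, that $H_1$ and $H_2$ exist and are \emph{quadratic} polynomials. Setting $m=\tfrac12(\mathbf p+\mathbf r)$ for the midpoint of the chord, I would exploit that for any quadratic $H$ one has the exact identity $H(\mathbf p)-H(\mathbf r)=\langle \mathbf p-\mathbf r,\nabla H(m)\rangle$. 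Thus $H_1(\mathbf p)=H_1(\mathbf r)$ and $H_2(\mathbf p)=H_2(\mathbf r)$ are equivalent to the chord direction $\mathbf p-\mathbf r$ being orthogonal to both $\nabla H_1(m)$ and $\nabla H_2(m)$. Since $\mathbf p,\mathbf r\in S^1$, the chord is also orthogonal to the radial direction $m$. In the plane, three vectors orthogonal to a single line must be collinear, so the admissible midpoints are precisely those satisfying
\[ \det\big(m,\ \nabla H_1(m)\big)=0 \qquad\text{and}\qquad \det\big(m,\ \nabla H_2(m)\big)=0. \]

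These two relations define conics $E_1$ and $E_2$ in the variable $m$, and the correspondence between a midpoint $m$ with $|m|<1$ and the chord through $m$ perpendicular to $Om$ is a bijection between admissible midpoints and candidate cycles. The next step is to count $E_1\cap E_2$. Both conics pass through the origin (the constant vectors $\nabla H_i(0)$ give $\det(0,\cdot)=0$), and by B\'ezout they meet in at most four points. The origin $m=(0,0)$ is always a common solution; it corresponds to a diameter, i.e. the degenerate antipodal configuration, and is discarded. If $E_1$ and $E_2$ shared a whole component, the system would have a one‑parameter family of solutions, which is the (separately treated) case of infinitely many periodic orbits; otherwise the intersection is finite. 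A useful structural remark is that the quadratic part of each $E_i$ equals $\det\!\big(m,\mathrm{Hess}(H_i)\,m\big)$, a harmonic quadratic form, so each $E_i$ is a rectangular hyperbola and its two directions at infinity are explicit; this lets me confirm that $E_1,E_2$ generically meet only at finite points, leaving at most three nonzero candidate midpoints.

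The main obstacle is to pass from this a priori count of three candidates to the sharp bound of two. I would eliminate the third candidate using admissibility: a midpoint yields a genuine crossing limit cycle only when it lies strictly inside the disk ($|m|<1$, which also removes the trivial diagonal $\mathbf p=\mathbf r$) \emph{and} when both endpoints lie in the crossing region, that is $Xh(\mathbf p)\,Yh(\mathbf p)>0$ and $Xh(\mathbf r)\,Yh(\mathbf r)>0$ for $h(x,y)=x^2+y^2-1$. The delicate part is the sign analysis of $Xh\cdot Yh$ along $S^1$: combined with the rectangular‑hyperbola structure forced by the divergence‑free hypothesis, I expect it to exclude one of the three intersections and leave at most two. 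An equivalent and sometimes more convenient formulation, which I would keep as a computational check, is the half‑angle substitution $\sigma=\tfrac12(\alpha+\beta)$, $\tau=\tfrac12(\alpha-\beta)$ on $S^1$: the two integral conditions factor a $\sin\tau$ (nonzero precisely for distinct points) and reduce to $A(\sigma)+\cos\tau\,B(\sigma)=0$ and $C(\sigma)+\cos\tau\,D(\sigma)=0$ with $A,C$ first‑order and $B,D$ second‑order trigonometric polynomials; eliminating $\cos\tau$ gives the single equation $AD-BC=0$, whose roots occur in antipodal pairs $\{\sigma,\sigma+\pi\}$ representing the same cycle, and the crux is again to verify that only two admissible pairs survive the constraints $|\cos\tau|<1$ and the crossing‑region sign conditions.
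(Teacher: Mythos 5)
Your reduction to conics is attractive and the midpoint identity $H(\mathbf p)-H(\mathbf r)=\langle \mathbf p-\mathbf r,\nabla H(m)\rangle$ is correct for quadratic first integrals, so the closing equations for a non-degenerate chord do become $\det(m,\nabla H_1(m))=\det(m,\nabla H_2(m))=0$ with $0<|m|<1$. But the proposal stops exactly where the theorem lives. B\'ezout gives four intersection points of $E_1$ and $E_2$; removing the origin leaves three candidate midpoints, and your passage from three to two is never carried out --- you write ``I expect it to exclude one of the three intersections'' and ``the crux is again to verify.'' That missing step is the entire content of the statement. It also aims at the wrong mechanism: the bound in the paper does not come from the crossing-region sign conditions $Xh\cdot Yh>0$ (those are only used in the examples to certify that a candidate is genuinely a crossing cycle). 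The paper's proof assumes the closing system has three solutions $(\cos\alpha_i,\sin\alpha_i,\cos\theta_i,\sin\theta_i)$, uses them to solve for $\varepsilon$, $\delta$ and $l$, and finds that $H_1$ and $H_2$ then both reduce, modulo $p^2+q^2=r^2+s^2=1$, to one and the same function $\tilde f$; the closing system collapses to three equations in four unknowns with a continuum of solutions, so no configuration with three solutions can consist of isolated (hence limit) cycles. In your language: three admissible non-origin midpoints would force $E_1$ and $E_2$ to share a component inside the disk. Nothing in your write-up establishes this, and the sign conditions alone cannot be expected to do it.

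There is also a concrete error: you discard the origin of the midpoint plane as ``the degenerate antipodal configuration.'' Antipodal crossing points are not degenerate --- a diameter chord with $H_i(\mathbf p)=H_i(-\mathbf p)$ gives a perfectly good crossing limit cycle whenever $\nabla H_1(0)$ and $\nabla H_2(0)$ are parallel. Indeed the unique limit cycle of Corollary \ref{st332} passes through $\pm(2/\sqrt{5},-1/\sqrt{5})$, and the first cycle in Corollary \ref{sc2cycle} passes through $\pm(1/\sqrt{37},-6/\sqrt{37})$; both have midpoint at the origin and would be thrown away by your convention. Keeping the origin as a legitimate candidate raises your a priori count to four, which makes the unproved elimination step even larger. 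To salvage the approach you would need to prove, not assert, a statement of the form: two conics of the shape $\lambda x+\mu y+A(x^2-y^2)+Bxy=0$ that are not proportional admit at most two common points $m$ (the origin counted as at most one diameter, via $\nabla H_1(0)\parallel\nabla H_2(0)$) giving rise to admissible chords --- or else abandon the count of midpoints and argue, as the paper does, that a third isolated solution is incompatible with isolation.
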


\begin{proof}
	Consider $X(x,y)=(\eta+ax+by,\zeta+cx+dy)$ with $divX=0$ and $Y(x,y)=(\delta+lx+ky,\varepsilon +mx+ny)$ with $divY=0$, by the Proposition \ref{clossingequation} the system of closing equations to study the existence of limit cycles that intersect $S^1$ in two points $(p,q)$ and $(r,s)$ is given by 
	$$\left\lbrace \def\arraystretch{1.5} \begin{array}{l}
		H_1(p,q)=H_1(r,s),\\
		H_2(p,q)=H_2(r,s),\\
		h(p,q)=0,\\
		h(r,s)=0.
	\end{array}\right.\Rightarrow
	\left\lbrace \def\arraystretch{1.5} \begin{array}{l}
		-\zeta p +\eta q +apq-\frac{cp^2}{2}+\frac{bq^2}{2}=-\zeta r +\eta s +ars-\frac{cr^2}{2}+\frac{bs^2}{2},\\
		-\varepsilon p +\delta q +lpq-\frac{mp^2}{2}+\frac{kq^2}{2}=-\varepsilon r +\delta s +lrs-\frac{mr^2}{2}+\frac{ks^2}{2},\\
		p^2+q^2=1,\\
		r^2+s^2=1,
	\end{array}\right.$$
	where
	$$H_1(x,y)=-\zeta x +\eta y +axy-\frac{cx^2}{2}+\frac{by^2}{2}\quad \mbox{and} \quad H_2(x,y)=-\varepsilon x +\delta y +lxy-\frac{mx^2}{2}+\frac{ky^2}{2}.$$
	Suppose that the system admits three solutions, 
	$$(p,q,r,s)=(\cos(\alpha_i),\sin(\alpha_i),\cos(\theta_i),\sin(\theta_i)), \quad \mbox{for}\; i=1,2,3,$$
	with $\alpha_i, \theta_i \in [0, 2\pi)$ and $\alpha_i < \theta_i$ for $i=1,2,3$. 
	
	Substituting the solution $(\cos(\alpha_1),\sin(\alpha_1),\cos(\theta_1),\sin(\theta_1))$ in the second equation of the system we obtain 
	$$\begin{array}{lll}
		\varepsilon&=&\dfrac{1}{2 (\cos (\alpha_1)-\cos (\theta_1))}\left(2 \delta  \sin (\alpha_1)-2 \delta  \sin (\theta_1)+k \sin ^2(\alpha_1)-k \sin ^2(\theta_1)\right.\\[0.5cm]
		& &\left. +2 l \sin (\alpha_1) \cos (\alpha_1) -2 l \sin (\theta_1) \cos (\theta_1)-m \cos ^2(\alpha_1)+m \cos ^2(\theta_1)\right).
	\end{array}$$
	Substituting the solution $(\cos(\alpha_2),\sin(\alpha_2),\cos(\theta_2),\sin(\theta_2))$ and $\varepsilon$  in the second equation of the system we obtain  $\delta=\frac{u}{v}$ being  
	
	$$\begin{array}{lll}	
		u& = & k \cos (\alpha_1) \sin ^2(\alpha_2)-k \sin ^2(\alpha_1) \cos (\alpha_2)-k \cos (\alpha_1) \sin ^2(\theta_2)+k \sin ^2(\alpha_1) \cos (\theta_2)\\[0.5cm]
		& & -k \sin ^2(\alpha_2) \cos (\theta_1)+k \cos (\alpha_2) \sin ^2(\theta_1)-k \sin ^2(\theta_1) \cos (\theta_2)+k \cos (\theta_1) \sin ^2(\theta_2) \\[0.5cm]
		& & -2 l \sin (\alpha_1) \cos (\alpha _1) \cos (\alpha_2)+2 l \cos (\alpha_1) \sin (\alpha_2) \cos (\alpha_2)+2 l \sin (\alpha_1) \cos (\alpha_1) \cos (\theta_2)\\[0.5cm]
		& &-2 l \cos (\alpha_1) \sin (\theta_2) \cos (\theta_2)-2 l \sin (\alpha_2) \cos (\alpha_2) \cos (\theta_1) +2 l \cos (\alpha_2) \sin (\theta_1) \cos (\theta_1) \\[0.5cm]
		& &-2 l \sin (\theta_1) \cos (\theta_1) \cos (\theta_2)+2 l \cos (\theta_1) \sin (\theta_2) \cos (\theta_2)+m \cos ^2(\alpha_1) \cos (\alpha_2) \\[0.5cm]
		& &-m \cos (\alpha_1) \cos ^2(\alpha_2)-m \cos ^2(\alpha_1) \cos (\theta_2)+m \cos (\alpha_1) \cos ^2(\theta_2) -m \cos (\alpha_2) \cos ^2(\theta_1) \\[0.5cm]
		& &+m \cos ^2(\alpha_2) \cos (\theta_1)-m \cos (\theta_1) \cos ^2(\theta_2) +m \cos ^2(\theta_1) \cos (\theta_2), 
	\end{array}$$
	
	$$\begin{array}{lll}
		v&=&2 (\sin (\alpha_1) \cos (\alpha_2)-\cos (\alpha_1) \sin (\alpha_2)-\sin (\alpha_1) \cos (\theta_2)+\cos (\alpha_1) \sin (\theta_2)\\[0.5cm]
		& &+\sin (\alpha_2) \cos (\theta_1)-\cos (\alpha_2) \sin (\theta_1)+\sin (\theta_1) \cos (\theta_2)-\cos (\theta_1) \sin (\theta_2)).
	\end{array}$$
	
	Now substituting the solution $(\cos(\alpha_3),\sin(\alpha_3),\cos(\theta_3),\sin(\theta_3))$, $\varepsilon$ and $\delta$ in the second equation of the system we obtain $l=\frac{t}{w}$ being  
	
	$$\begin{array}{lll}
		t& = &(k+m) \sin \left(\frac{\alpha_1-\theta_1}{2}\right) \sin \left(\frac{\alpha_2-\theta_2}{2}\right) \sin \left(\frac{\alpha_3-\theta_3}{2}\right) \left(-\cos \left(\frac{1}{2} (\alpha_1-3 \alpha_2-\alpha_3+\theta_1-\theta_2-\theta_3)\right)\right. \\[0.5cm]
		& &\left. +\cos \left(\frac{1}{2} (\alpha_1-\alpha_2-3 \alpha_3+\theta_1-\theta_2-\theta_3)\right)-\cos \left(\frac{1}{2} (3\alpha_1+\alpha_2-\alpha_3+\theta_1+\theta_2-\theta_3)\right)\right. \\[0.5cm]
		& &\left. +\cos \left(\frac{1}{2} (\alpha_1+3 \alpha_2-\alpha_3+\theta_1+\theta_2-\theta_3)\right)-\cos \left(\frac{1}{2} (\alpha_1+\alpha_2-\alpha_3+3 \theta_1+\theta_2-\theta_3)\right)\right. \\[0.5cm]
		& &\left. +\cos \left(\frac{1}{2} (\alpha_1+\alpha_2-\alpha_3+\theta_1+3 \theta_2-\theta_3)\right)-\cos \left(\frac{1}{2} (\alpha_1-\alpha_2-\alpha_3+\theta_1-3 \theta_2-\theta_3)\right)\right. \\[0.5cm]
		& &\left. +\cos \left(\frac{1}{2} (3 \alpha_1-\alpha_2+\alpha_3+\theta_1-\theta_2+\theta_3)\right)-\cos \left(\frac{1}{2} (\alpha_1-\alpha_2+3 \alpha_3+\theta_1-\theta_2+\theta_3)\right)\right. \\[0.5cm]
		& &\left. +\cos \left(\frac{1}{2} (\alpha_1-\alpha_2+\alpha_3+3 \theta_1-\theta_2+\theta_3)\right)-\cos \left(\frac{1}{2} (\alpha_1-\alpha_2+\alpha_3+\theta_1-\theta_2+3\theta_3)\right)\right. \\[0.5cm]
		& &\left. +\cos \left(\frac{1}{2} (\alpha_1-\alpha_2-\alpha_3+\theta_1-\theta_2-3 \theta_3)\right)\right),
	\end{array}$$
	
	$$\begin{array}{lll}
		w&=&\cos (\alpha_2) (\cos (\alpha_3) (\sin (\alpha_1)-\sin (\theta_1)) (\sin (\alpha_2)-\sin (\alpha_3))+\cos (\theta_3) (\sin (\alpha_1)-\sin (\theta_1))\\[0.5cm]
		& &(\sin (\theta_3)-\sin (\alpha_2))+\cos (\theta_1) (\sin (\alpha_2)-\sin (\theta_1)) (\sin (\alpha_3)-\sin (\theta_3)))\\[0.5cm]
		& &+\cos (\alpha_1) ((\sin (\alpha_3)-\sin (\theta_3)) (\cos (\alpha_2) (\sin (\alpha_1)-\sin (\alpha_2))+\cos (\theta_2) (\sin (\theta_2)-\sin (\alpha_1)))\\[0.5cm]
		& &+\cos (\alpha_3) (\sin (\alpha_3)-\sin (\alpha_1)) (\sin (\alpha_2)-\sin (\theta_2))+\cos (\theta_3) (\sin (\alpha_1)-\sin (\theta_3)) (\sin (\alpha_2)\\[0.5cm]
		& &-\sin (\theta_2)))+\sin (\alpha_1) \sin (\alpha_3) \cos (\alpha_3) \cos (\theta_2)-\sin (\alpha_1) \cos (\alpha_3) \sin (\theta_2) \cos (\theta_2)\\[0.5cm]
		& &+\sin (\alpha_1) \sin (\theta_2) \cos (\theta_2) \cos (\theta_3)-\sin (\alpha_1) \cos (\theta_2) \sin (\theta_3) \cos (\theta_3)\\[0.5cm]
		& &-\sin (\alpha_2) \sin (\alpha_3) \cos (\alpha_3) \cos (\theta_1)+\sin (\alpha_2) \cos (\alpha_3) \sin (\theta_1) \cos (\theta_1)\\[0.5cm]
		& &-\sin (\alpha_2) \sin (\theta_1) \cos (\theta_1) \cos (\theta_3)+\sin (\alpha_2) \cos (\theta_1) \sin (\theta_3) \cos (\theta_3)\\[0.5cm]
		& &-\sin (\alpha_3) \cos (\alpha_3) \sin (\theta_1) \cos (\theta_2)+\sin (\alpha_3) \sin (\theta_1) \cos (\theta_1) \cos (\theta_2)\\[0.5cm]
		& &+\sin (\alpha_3) \cos (\alpha_3) \cos (\theta_1) \sin (\theta_2)-\sin (\alpha_3) \cos (\theta_1) \sin (\theta_2) \cos (\theta_2)\\[0.5cm]
		& &-\cos (\alpha_3) \sin (\theta_1) \cos (\theta_1) \sin (\theta_2)+\cos (\alpha_3) \sin (\theta_1) \sin (\theta_2) \cos (\theta_2)\\[0.5cm]
		& &+\sin (\theta_1) \cos (\theta_1) \sin (\theta_2) \cos (\theta_3)-\sin (\theta_1) \sin (\theta_2) \cos (\theta_2) \cos (\theta_3)\\[0.5cm]
		& &-\sin (\theta_1) \cos (\theta_1) \cos (\theta_2) \sin (\theta_3)+\sin (\theta_1) \cos (\theta_2) \sin (\theta_3) \cos (\theta_3)\\[0.5cm]
		& &+\cos (\theta_1) \sin (\theta_2) \cos (\theta_2) \sin (\theta_3)-\cos (\theta_1) \sin (\theta_2) \sin (\theta_3) \cos (\theta_3).
	\end{array}$$
	
	Substituting $\varepsilon$, $\delta$ e $l$ in the first integral $H_2$ we have
	$$H_2(x,y)=-\frac{m x^2}{2}+\frac{k y^2}{2}+(k+m)f(x,y).$$
	Similarly we have that
	$$H_1(x,y)=-\frac{c x^2}{2}+\frac{b y^2}{2}+(c+b)f(x,y).$$
	
	Hence, we obtain the system of closing equations below
	$$\left\lbrace \def\arraystretch{1.5} \begin{array}{l}
			-\frac{c p^2}{2}+\frac{b q^2}{2}+(c+b)f(p,q)=-\frac{c r^2}{2}+\frac{b s^2}{2}+(c+b)f(r,s),\\[0.3cm]
			-\frac{m p^2}{2}+\frac{k q^2}{2}+(k+m)f(p,q)=-\frac{m r^2}{2}+\frac{k s^2}{2}+(k+m)f(r,s),\\[0.3cm]
			p^2+q^2=1,\\[0.3cm]
			r^2+s^2=1.
		\end{array}\right.$$
	We have that $q^2=1-p^2$ and $s^2=1-r^2$, hence in the first equation we obtain 
	$$-\frac{c p^2}{2}+\frac{b (1-p^2)}{2}+(c+b)f(p,q)=-\frac{c r^2}{2}+\frac{b (1-r^2)}{2}+(c+b)f(r,s)$$
	$$\Rightarrow -\frac{p^2}{2}(c+b)+\frac{b}{2}+(c+b)f(p,q)=-\frac{r^2}{2}(c+b)+\frac{b}{2}+(c+b)f(r,s)$$
	$$\Rightarrow \tilde{f}(p,q)=\tilde{f}(r,s),$$
	where $$\tilde{f}(x,y)=-\frac{x^2}{2}+f(x,y).$$
	Similarly, substituting $q^2=1-p^2$ and $s^2=1-r^2$ in the second equation we have 
	 $ \tilde{f}(p,q)=\tilde{f}(r,s)$.
	Thus, the system of closing equations becomes
	$$\left\lbrace \def\arraystretch{1.5} \begin{array}{l}
		\tilde{f}(p,q)=\tilde{f}(r,s),\\
		p^2+q^2=1,\\
		r^2+s^2=1.
	\end{array}\right.$$
	We have a polynomial system with three equations and four unknowns, so once we assume that the system admits at least three solutions, we obtain that the system admits infinite solutions. 
	
	Therefore, the maximum number of crossing limit cycles that intersect $S^1$ in two points of the piecewise smooth vector field $Z_{XY}$ is less than or equal to two.
\end{proof}

This theorem admits a generalization, presented below.

\begin{theorem}Consider a piecewise smooth vector field 
	$$Z_{XY}(x,y)=\left\{
	\begin{array}{lcl}
		X(x,y),& e(x,y)\leq 0,\\
		Y(x,y),& e(x,y)\geq 0,
	\end{array}
	\right.$$
	where $X(x,y)=(\eta+ax+by,\zeta+cx+dy)$, $Y(x,y)=(\delta+lx+ky,\varepsilon +mx+ny)$,  $divX=divY=0$ and $e(x,y)=\frac{x^2}{u^2}+\frac{v^2}{b^2}-R$. 
	Then the maximum number of crossing limit cycles of the piecewise smooth vector field $Z_{XY}$ that intersect $S^1$ in two points is less than or equal to two.
\end{theorem}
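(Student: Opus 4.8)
The plan is to reduce the statement to Theorem \ref{twocycle} by a linear change of coordinates that carries the ellipse $\mathcal{E}=e^{-1}(0)$ onto the unit circle $S^1$, rather than to rerun the long elimination. Since $\mathcal{E}$ is centered at the origin with semi-axes $u\sqrt{R}$ and $b\sqrt{R}$ (assuming $R>0$), the diagonal linear map $\Phi(x,y)=\left(\frac{x}{u\sqrt{R}},\frac{y}{b\sqrt{R}}\right)$ is a diffeomorphism of $\mathbb{R}^2$ with $\Phi(\mathcal{E})=S^1$. I would first push the two affine fields forward through $\Phi$: writing $D$ for the (diagonal, invertible) matrix of $\Phi$, the pushforward $\widetilde{X}=\Phi_{*}X$ is again affine, with constant part $Dv_X$ and linear part $DA_XD^{-1}$, where $v_X$ and $A_X$ are the constant and linear parts of $X$; likewise for $Y$.

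The first key point is that the divergence-free hypothesis survives the reduction. Because conjugation preserves the trace, $\operatorname{div}\widetilde{X}=\operatorname{tr}(DA_XD^{-1})=\operatorname{tr}(A_X)=\operatorname{div}X=0$, and similarly $\operatorname{div}\widetilde{Y}=0$. Hence $\widetilde{Z}=Z_{\widetilde{X}\widetilde{Y}}$ has affine, trace-free components and discontinuity set exactly $S^1$, so it falls precisely into the class to which Theorem \ref{twocycle} applies.

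The second key point is that $\Phi$ preserves the entire Filippov structure, so the limit-cycle count is an invariant of the reduction. Away from the discontinuity, $\Phi$ conjugates the flow of $Z$ to that of $\widetilde{Z}$; and the Lie-derivative signs transform consistently, since $(\Phi_{*}X)(\widetilde{e})$ at $\Phi(p)$ equals $X(e)$ at $p$, where $\widetilde{e}=e\circ\Phi^{-1}$. Thus the crossing, sliding and escape regions, as well as the tangency points, are mapped to their counterparts. Consequently a crossing limit cycle of $Z$ meeting $\mathcal{E}$ in exactly two points corresponds bijectively to a crossing limit cycle of $\widetilde{Z}$ meeting $S^1$ in exactly two points. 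Applying Theorem \ref{twocycle} to $\widetilde{Z}$ bounds the latter number by two, hence the former by two as well.

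The step requiring the most care is this last invariance: one must check not only that $\Phi$ is a dynamical conjugacy but that it sends tangency points to tangency points, so that a crossing cycle cannot be silently converted into a sliding or escaping one and the qualifier \emph{``intersects in exactly two points''} is genuinely preserved. The divergence and pushforward computations are then routine. An alternative, purely computational route would mirror the proof of Theorem \ref{twocycle} verbatim, replacing the circle constraints $p^2+q^2=1$, $r^2+s^2=1$ by the ellipse constraints $\frac{p^2}{u^2}+\frac{q^2}{b^2}=R$, $\frac{r^2}{u^2}+\frac{s^2}{b^2}=R$ and re-running the reduction to a single equation $\widetilde{f}(p,q)=\widetilde{f}(r,s)$ in four unknowns; this works as well, but it obscures the conceptual reason, namely the affine invariance of the whole construction.
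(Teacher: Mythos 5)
Your proposal is correct, but it takes a genuinely different route from the paper. The paper's entire proof of this theorem is the single sentence ``the idea of the demonstration is similar to Theorem \ref{twocycle}'', i.e.\ exactly the computational alternative you sketch at the end: re-run the elimination with the circle constraints replaced by the ellipse constraints. Your primary argument instead reduces the ellipse case to the already-proved circle case by the diagonal linear map $\Phi$, checking the three things that actually need checking: affine fields push forward to affine fields, the trace (hence the divergence-free hypothesis) is invariant under conjugation, and the Lie derivatives $Xe(p)$ and $(\Phi_{*}X)\widetilde{e}(\Phi(p))$ agree because $\nabla\widetilde{e}(\Phi(p))=(D^{T})^{-1}\nabla e(p)$ cancels against $D$, so crossing/sliding/escape regions and tangencies are preserved and the two-intersection-point count is an invariant. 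This is arguably the safer of the two routes: the collapse to a single equation $\tilde f(p,q)=\tilde f(r,s)$ in the proof of Theorem \ref{twocycle} exploits the specific substitution $q^{2}=1-p^{2}$, and verifying that the same collapse survives the substitution $q^{2}=v^{2}R-(v^{2}/u^{2})p^{2}$ is not quite verbatim, whereas after conjugation Theorem \ref{twocycle} applies with no modification at all. Two minor points you should make explicit: the statement's $e(x,y)=\frac{x^{2}}{u^{2}}+\frac{v^{2}}{b^{2}}-R$ is evidently a typo for $\frac{x^{2}}{u^{2}}+\frac{y^{2}}{v^{2}}-R$ (and ``intersect $S^{1}$'' should read ``intersect $e^{-1}(0)$''), and the case $R\leq 0$ or $uv=0$ should be dismissed as degenerate (empty or pointlike discontinuity set) before assuming $\Phi$ is a diffeomorphism.
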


\begin{proof}
	The idea of the demonstration is similar to the Theorem \ref{twocycle}. 
\end{proof}

\subsection{Saddle-center case}

In this subsection, we prove the Theorem B and we show that there are vector fields in $\mathfrak{X}^{S^1}_2$ that have an infinite number of crossing periodic orbits, vector fields that do not admit periodic orbits, vector fields that have only one crossing limit cycle and vector fields that admit two crossing limit cycles. To do so, we are going to present examples of each case. 

\begin{corollary}
	There is $Z_{XY} \in \mathfrak{X}^{S^1}_2$ that admits an infinite number of crossing periodic orbits that intersect $S^1$ in two points. 
\end{corollary}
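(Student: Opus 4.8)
The plan is to produce an explicit example and then exploit a reflection symmetry to exhibit a continuum of solutions of the closing equations. I would take as interior field the linear saddle $X(x,y)=(2y,6x)$ and as exterior field the linear center $Y(x,y)=(2y,-6x)$. The associated matrix of $X$ has determinant $-12<0$ and trace $0$, so $X$ is a saddle with $\operatorname{div}X=0$; the matrix of $Y$ has determinant $+12>0$ and trace $0$, so $Y$ is a center with $\operatorname{div}Y=0$. Hence $Z_{XY}\in\mathfrak{X}^{S^1}_2$. A direct verification (or the formula obtained in the proof of Proposition \ref{clossingequation}) shows that the two fields admit the first integrals
\[
H_1(x,y)=-3x^2+y^2,\qquad H_2(x,y)=3x^2+y^2.
\]

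Next I would feed these integrals into the closing system $H_1(p,q)=H_1(r,s)$, $H_2(p,q)=H_2(r,s)$, $p^2+q^2=1$, $r^2+s^2=1$. The crucial feature is that both $H_1$ and $H_2$ are even in the first variable, so the whole system depends only on $p^2,q^2,r^2,s^2$. Adding the two integral equations gives $q^2=s^2$ and subtracting them gives $p^2=r^2$; combined with the circle constraints this forces $(r,s)=(\pm p,\pm q)$. In particular the one-parameter family $(p,q,r,s)=(p,q,-p,q)$ with $p^2+q^2=1$ solves the system, and the solutions are genuinely distinct, i.e. $(p,q)\neq(r,s)$, exactly when $p\neq 0$. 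This already yields infinitely many candidate crossing connections.

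Then I would argue that this algebraic family really bounds periodic orbits. By the symmetry $x\mapsto -x$, each pair $\{(p,q),(-p,q)\}$ is joined inside the disk by an arc of the hyperbola $\{H_1=-3p^2+q^2\}$ and outside the disk by an arc of the ellipse $\{H_2=3p^2+q^2\}$, both arcs being mirror-symmetric about the $y$-axis and sharing exactly the two endpoints on $S^1$. A short computation confirms the correct location: along the hyperbolic arc one finds $x^2+y^2=(q^2-3p^2)+4x^2\le 1$ (with equality only at the endpoints), so the arc stays inside the circle whenever $q^2>3p^2$, while the ellipse has $3p^2+q^2=1+2p^2>1$, so its complementary arc bulges outside the circle. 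The two arcs therefore concatenate into a single closed crossing curve near the poles $(0,\pm1)$. Checking that both contact points lie in the crossing region $\Sigma^c$ is then immediate from the symmetry, exactly as in the verification carried out in Corollary \ref{st332}. As $(p,q)$ ranges over the continuum of circle points with $q^2>3p^2$ and $p\neq 0$, this produces infinitely many crossing periodic orbits, which I would record in a phase-portrait figure analogous to the earlier cases.

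The algebra here is elementary, so the only delicate point—and hence the main obstacle—is the last step: confirming that the infinitely many solutions of the closing equations are not spurious but genuinely bound crossing periodic orbits. Concretely, one must ensure that the interior hyperbola arc and the exterior ellipse arc lie on the correct sides of $S^1$ and close up (which restricts the admissible $(p,q)$, though still to a continuum), and that both endpoints belong to $\Sigma^c$ rather than to a sliding or escaping region. Once this geometric realizability is established, the conclusion follows.
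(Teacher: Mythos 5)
Your overall strategy is the same as the paper's (pick a symmetric saddle--center pair so that the closing equations degenerate to a one-parameter family of solutions; the paper uses $X(x,y)=(-6y,4-4x)$, $Y(x,y)=(-4y,8+8x)$ and the family $(p,q,p,-q)$). However, your specific example fails at exactly the step you defer to the end and then declare ``immediate from the symmetry'': the crossing-region check. With $X(x,y)=(2y,6x)$ and $Y(x,y)=(2y,-6x)$ one has, on $S^1$ with $\nabla h=(2x,2y)$,
\[
Xh(x,y)=\langle (2y,6x),(2x,2y)\rangle=16xy,\qquad Yh(x,y)=\langle (2y,-6x),(2x,2y)\rangle=-8xy,
\]
so $Xh\cdot Yh=-128\,x^{2}y^{2}\le 0$, with equality only at the four axis points. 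Hence every candidate intersection point $(\pm p,q)$ with $pq\neq0$ lies in the sliding or escape region, not in $\Sigma^{c}$, and no crossing trajectory passes through it. Concretely, at $(p,q)$ with $p,q>0$ the field $X$ points out of the disk while $Y=(2q,-6p)$ points into it (the clockwise $Y$-arc of the ellipse through $(p,q)$ immediately enters the unit disk), so the hyperbola arc and the ellipse arc cannot be concatenated into a trajectory of $Z_{XY}$. The solutions of the closing equations you found are therefore all spurious, and the proof as written establishes nothing.

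The error is repairable, since only the orientation of the center is wrong: replacing $Y$ by $(-2y,6x)$ keeps the same first integral $H_2=3x^{2}+y^{2}$ and the same algebra, but gives $Yh=8xy$ and hence $Xh\cdot Yh=128\,x^{2}y^{2}>0$ off the axes; one then checks that the counterclockwise ellipse arc from $(p,q)$ to $(-p,q)$ stays outside the disk and matches the direction of the $X$-flow along the upper hyperbola branch. The general moral, which your writeup acknowledges but does not act on, is that for piecewise smooth systems the sign (orientation) of each piece matters even though it is invisible in the first integrals and in the closing equations; the crossing condition must be verified explicitly, as the paper does in its one- and two-limit-cycle examples.
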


\begin{proof}
	Consider the piecewise smooth vector field $Z_{XY}$, where $$X(x,y)=(-6y, 4 - 4x)\quad \mbox{and} \quad Y(x,y)=(-4y, 8 + 8x).$$
	The fields $X$ and $Y$ have the following first integrals 
	$$H_1(x,y)=-4x + 2x^2 - 3y^2\quad \mbox{and}\quad H_2(x,y)=-8x - 4x^2 - 2y^2,$$
	respectively.
	
	To study the existence of crossing limit cycles of the field $Z_{XY}$ we use the closing equations, with which we get the system of nonlinear equations
	$$\left\lbrace \def\arraystretch{1.5} \begin{array}{l}
		-4p + 2p^2 - 3q^2=-4r + 2r^2 - 3s^2,\\
		-8p - 4p^2 - 2s^2=-8r - 4r^2 - 2s^2,\\
		p^2+q^2=1,\\
		r^2+s^2=1,
	\end{array}\right. $$
	on which $(p,q,r,s)$ satisfies $(p,q)\neq (r,s)$. From the closing equations we have $q^2=1-p^2$ and $s^2=1-r^2$, thus obtaining the system
	$$\left\lbrace \def\arraystretch{1.5} \begin{array}{l}
		-4p + 2p^2 - 3(1-p^2)=-4r + 2r^2 - 3(1-p^2),\\
		-8p - 4p^2 - 2(1-p^2)=-8r - 4r^2 - 2(1-r^2),
	\end{array}\right. \Rightarrow 
	\left\lbrace \def\arraystretch{1.5} \begin{array}{l}
		-4p + 5p^2=-4r + 5r^2,\\
		-8p - 2p^2 =-8r - 2r^2,
	\end{array}\right. \Rightarrow$$
	$$ 
	\left\lbrace \def\arraystretch{1.5} \begin{array}{l}
		-4p + 5p^2=-4r + 5r^2,\\
		4p +p^2 =4r +r^2,
	\end{array}\right. \Rightarrow r^2=4p+p^2-4r \Rightarrow 	-4p + 5p^2=-4r + 5(4p+p^2-4r) \Rightarrow p=r.$$
	Thus, we get that $s=\pm q$. In this way, $(p,q,r,s)=(p,q,p,\pm q)$, and since $(p,q)\neq (r,s)$ we cannot have $s=q$. Hence, $(p,q,r,s)=(p,q,p,- q)$. 
	
	Therefore, $Z_{XY}$ admits an infinite number of crossing periodic orbits that intersect $S^1$ in two points, which we illustrate in Figure \ref{scorbper}.
	
\end{proof}

\begin{corollary}
	There is $Z_{XY} \in \mathfrak{X}^{S^1}_2$ that does not have crossing periodic orbits that intersect $S^1$ in two points.
\end{corollary}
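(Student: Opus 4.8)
The plan is to exhibit an explicit pair $(X,Y)\in\mathfrak{X}^{S^1}_2$ for which the closing equations of Proposition \ref{clossingequation} admit only the trivial solution $(p,q)=(r,s)$. Since those equations are a necessary condition for a crossing periodic orbit meeting $S^1$ in two points, the absence of nontrivial solutions rules out any such orbit, with no need to examine the crossing region. The guiding idea is to choose $X$ a saddle and $Y$ a center whose first integrals, after restricting to $S^1$ via $q^2=1-p^2$ and $s^2=1-r^2$, each depend on a single coordinate: $H_1|_{S^1}$ on the second coordinate only and $H_2|_{S^1}$ on the first only. The first closing equation then forces $q=s$, the second forces $p=r$, and together they give $(p,q)=(r,s)$, contradicting the standing hypothesis $(p,q)\neq(r,s)$.

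Concretely, I would take
$$X(x,y)=(2-y,\,-x)\qquad\text{and}\qquad Y(x,y)=(y,\,\varepsilon-x),\quad \varepsilon\neq 0$$
(for definiteness one may set $\varepsilon=3$). Here $X$ is a linear saddle with $\operatorname{div}X=0$ (linear part of trace $0$ and determinant $-1$, singularity at $(0,2)$) and $Y$ is a linear center with $\operatorname{div}Y=0$ (trace $0$, determinant $1$, singularity at $(\varepsilon,0)$), so $Z_{XY}\in\mathfrak{X}^{S^1}_2$ and Proposition \ref{clossingequation} applies. A direct check gives the first integrals
$$H_1(x,y)=2y+\tfrac12 x^2-\tfrac12 y^2\qquad\text{and}\qquad H_2(x,y)=-\varepsilon x+\tfrac12 x^2+\tfrac12 y^2.$$

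The heart of the argument is the reduction on $S^1$. Substituting $x^2=1-y^2$ yields $H_1|_{S^1}=-q^2+2q+\tfrac12$, a function of $q$ alone whose linear term breaks the reflection $q\mapsto-q$; hence $H_1(p,q)=H_1(r,s)$ factors as $(s-q)(s+q-2)=0$, and because $|q|,|s|\le 1$ the branch $s+q=2$ collapses to the degenerate point $q=s=1$. Since the quadratic part of $H_2$ is $\tfrac12(x^2+y^2)$, one has $H_2|_{S^1}=-\varepsilon p+\tfrac12$, so $H_2(p,q)=H_2(r,s)$ forces $p=r$ whenever $\varepsilon\neq0$. Combining $q=s$ with $p=r$ leaves only $(p,q)=(r,s)$, which completes the conclusion.

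The main obstacle is exactly the symmetry phenomenon that produces orbits in the companion example of the previous corollary: there both restricted integrals were even in the eliminated coordinate, so the reflected partner (e.g. $s=-q$) survived and generated a one-parameter family of crossing periodic orbits. The design above avoids this by inserting genuine linear terms—the $2y$ in $H_1$ and the $-\varepsilon x$ in $H_2$—which make each restricted integral injective on the relevant arc and thereby annihilate the reflected branch. The only point demanding care is confirming that these linear terms are compatible with the saddle/center and zero-divergence constraints, namely $\zeta=a=0$ with $bc>0$ for $X$ and $\delta=l=n=0$ with $km<0$ for $Y$; once this is secured the remaining elimination is routine.
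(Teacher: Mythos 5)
Your proposal is correct and follows essentially the same strategy as the paper: exhibit an explicit saddle--center pair, compute the first integrals, restrict the closing equations of Proposition \ref{clossingequation} to $S^1$ via $q^2=1-p^2$, $s^2=1-r^2$, and show they force $(p,q)=(r,s)$, which rules out any crossing periodic orbit. Your concrete example $X=(2-y,-x)$, $Y=(y,\varepsilon-x)$ differs from the paper's, and your checks (saddle/center type, zero divergence, the degenerate branch $q+s=2$) are all accurate.
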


\begin{proof}
	Consider the piecewise smooth vector field $Z_{XY}$, on which $$X(x,y)=(-4 + 2 x - 4 y, 4 - 2 x - 2 y)\quad \mbox{and} \quad Y(x,y)=(-y, 1 + x).$$
	The fields $X$ and $Y$ have the following first integrals
	$$H_1(x,y)=-4 x + x^2 - 4 y + 2 x y - 2 y^2\quad \mbox{and}\quad H_2(x,y)=-x-\frac{x^2}{2}-\frac{y^2}{2},$$
	respectively.
	
	To study the existence of crossing limit cycles of the field $Z_{XY}$ we use the closing equations, with which we get the system of nonlinear equations
	$$\left\lbrace \def\arraystretch{1.5} \begin{array}{l}
		-4p + p^2 - 4 q + 2pq - 2q^2=-4r + r^2 - 4s + 2rs - 2r^2,\\
		p-\frac{p^2}{2}-\frac{q^2}{2}=r-\frac{r^2}{2}-\frac{s^2}{2},\\
		p^2+q^2=1,\\
		r^2+s^2=1,
	\end{array}\right. $$
	where $(p,q,r,s)$ satisfies $(p,q)\neq (r,s)$. From the closing equations we have $q^2=1-p^2$ and $s^2=1-r^2$, thus obtaining the system 
	$$\left\lbrace \def\arraystretch{1.5} \begin{array}{l}
		-4p + p^2 - 4 q + 2pq - 2(1-p^2)=-4r + r^2 - 4s + 2rs - 2(1-r^2),\\
		p-\frac{p^2}{2}-\frac{1-p^2}{2}=r-\frac{r^2}{2}-\frac{1-r^2}{2},
	\end{array}\right. $$
	with which we get that $p=r$. Substituting this in the first equation of the system we have
	$$-4p + p^2 - 4 q + 2pq - 2(1-p^2)=-4p + p^2 - 4s + 2ps - 2(1-p^2) $$
	$$\Rightarrow q(-4+2p)=s(-4+2p) \Rightarrow r=s.$$
	That way, $(p,q,r,s)=(p,q,p,q)$. 
	
	Therefore, since $(p,q) \neq (r,s)$, $Z_{XY}$ does not have crossing periodic orbits that intersect $S^1$ in two points, as we illustrate in Figure \ref{scsemorb}.
	
\end{proof}

\begin{corollary}
	There is $Z_{XY} \in \mathfrak{X}^{S^1}_2$ that has exactly one crossing limit cycle that intersects $S^1$ in two points.
\end{corollary}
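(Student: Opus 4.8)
The plan is to exhibit a single explicit example, following the same template used in the constant--center and constant--saddle cases (Corollary~\ref{st332} and its analogues). First I would choose an affine saddle field $X$ with $divX=0$ and an affine center field $Y$. Since the linear part of a center has purely imaginary eigenvalues, its trace—and hence its divergence—vanishes automatically, so both $X$ and $Y$ satisfy the hypotheses of Proposition~\ref{clossingequation} and the closing equations are available directly. I would position the saddle and the center (for instance with their singularities on opposite sides of $S^1$) so that the invariant conics of $X$ and $Y$ meet the unit circle transversally, and then record the two first integrals $H_1,H_2$ read off from the proposition.

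Next I would assemble the closing system $H_1(p,q)=H_1(r,s)$, $H_2(p,q)=H_2(r,s)$, $p^2+q^2=1$, $r^2+s^2=1$ under the constraint $(p,q)\neq(r,s)$, and solve it with a Gr\"obner basis computation. The outcome I am aiming for is a system whose only admissible solutions form a single pair $(p,q,r,s)$ and $(r,s,p,q)$ that describe one and the same closed curve. By Theorem~\ref{twocycle} we already know there are at most two such crossing cycles, so producing exactly one genuine solution pair suffices to pin the count down.

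Finally, to confirm that the resulting closed curve is an actual crossing limit cycle and not a spurious algebraic solution, I would verify the crossing condition at the two intersection points. With $h(x,y)=x^2+y^2-1$ and $\nabla h=(2x,2y)$ I would compute $Xh=\langle X,\nabla h\rangle$ and $Yh=\langle Y,\nabla h\rangle$ and check that $Xh(p,q)\,Yh(p,q)>0$ and $Xh(r,s)\,Yh(r,s)>0$, so that both points belong to the crossing region $\Sigma^{c}$. Combined with the uniqueness of the admissible solution pair, this gives exactly one crossing limit cycle intersecting $S^1$ in two points.

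The main obstacle is computational rather than conceptual: I must choose the coefficients of $X$ and $Y$ so that the Gr\"obner elimination returns precisely one crossing pair—avoiding both the degenerate ``same-point'' family $(p,q,r,s)=(p,q,p,q)$ that appears in the no-cycle examples and the possibility of two distinct pairs—while at the same time forcing both intersection points into $\Sigma^{c}$. I expect to fix the geometry of the two conics relative to $S^1$ first, and then tune the remaining free parameters until the closing system has exactly the desired single admissible solution.
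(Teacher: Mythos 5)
Your proposal follows exactly the paper's template: the paper exhibits the explicit example $X(x,y)=\left(4y-\tfrac{9}{2},\,-2x-\tfrac{4}{5}\right)$, $Y(x,y)=\left(-2y+\tfrac{1}{3},\,-4x-\tfrac{3}{2}\right)$, writes the first integrals, solves the closing system by a Gr\"obner basis to find a single solution pair $(p,q,r,s)$ and $(r,s,p,q)$ describing one closed curve, and verifies $Xh\cdot Yh>0$ at both intersection points. The only thing missing from your write-up is the actual choice of coefficients --- since the statement is an existence claim, the proof is not complete until a concrete $(X,Y)$ with the promised Gr\"obner output is displayed, but the strategy is correct and identical to the paper's.
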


\begin{proof}
	Consider the piecewise smooth vector field $Z_{XY}$, on which $$X(x,y)=\left(4y-\frac{9}{2},-2x-\frac{4}{5}\right)\quad \mbox{and} \quad Y(x,y)=\left(-2 y+\frac{1}{3},-4 x-\frac{3}{2}\right).$$
	The fields $X$ and $Y$ have the following first integrals
	$$H_1(x,y)=2 x^2+\frac{3 x}{2}-y^2+\frac{y}{3}\quad \mbox{and} \quad H_2(x,y)=\frac{4}{5} x+x^2+2 y^2-\frac{9 y}{2},$$
	respectively.
	
	To study the existence of crossing limit cycles of the field $Z_{XY}$ we use the closing equations, with which we get the system of nonlinear equations
	$$\left\lbrace \def\arraystretch{1.5} \begin{array}{l}
		2 p^2+\frac{3 p}{2}-q^2+\frac{q}{3}=2 r^2+\frac{3 r}{2}-s^2+\frac{s}{3},\\
		\frac{4}{5} p+p^2+2 q^2-\frac{9 q}{2}=\frac{4}{5} r+r^2+2 s^2-\frac{9 s}{2},\\
		p^2+q^2=1,\\
		r^2+s^2=1,
	\end{array}\right. $$
	where $(p,q,r,s)$ satisfies $(p,q)\neq (r,s)$. To solve this system, we used the Grobner base, obtaining the solutions
	$$\left(-\frac{\sqrt{\frac{2046470263}{84857}}}{468}-\frac{421}{1580}, \frac{421}{468}-\frac{\sqrt{\frac{2046470263}{84857}}}{1580}, \frac{\sqrt{\frac{2046470263}{84857}}}{468}-\frac{421}{1580}, \frac{\sqrt{\frac{2046470263}{84857}}}{1580}+\frac{421}{468} \right), $$
	$$\left(\frac{\sqrt{\frac{2046470263}{84857}}}{468}-\frac{421}{1580}, \frac{\sqrt{\frac{2046470263}{84857}}}{1580}+\frac{421}{468},-\frac{\sqrt{\frac{2046470263}{84857}}}{468}-\frac{421}{1580}, \frac{421}{468}-\frac{\sqrt{\frac{2046470263}{84857}}}{1580} \right), $$
	that represent the same closed curve. 
	
	To complete, we check if these points are in the crossing region. Remember that $h(x,y)=x^2+y^2-1$, then $\nabla h=(2x,2y)$. Hence,
	$$Xh(x,y)=\left\langle  \left(4y-\frac{9}{2},-2x-\frac{4}{5}\right), (2x,2y) \right\rangle = \frac{2 x}{3}-3 y-12 x y,$$
	$$Yh(x,y)=\left\langle  \left(-2 y+\frac{1}{3},-4 x-\frac{3}{2}\right), (2x,2y) \right\rangle =-9 x-\frac{8 y}{5}+4 x y.$$  
	Thus, 
	$$Xh(p,q)= \frac{61273961 \sqrt{173657327107391}-94577623204545}{241653074633100}>0,$$ 
	$$Yh(p,q) = \frac{113022317 \sqrt{173657327107391}+94577623204545}{724959223899300}>0,$$
	$$Xh(r,s) =\frac{24714755 \sqrt{173657327107391}-374942037011263}{241653074633100}<0,$$ 
	$$Yh(r,s) = \frac{2572781848965439-232415235 \sqrt{173657327107391}}{724959223899300}<0,$$
	which imply $Xh(p,q)Yh(p,q)>0$ and $Xh(r,s)Yh(r,s)>0$, that is, $(p,q)$ and $(r,s)$ belong to the crossing region of the discontinuity manifold $S^1$.
	
	Therefore, $Z_{XY}$ has only one crossing limit cycles that intersect $S^1$ in two points, which we illustrate in Figure \ref{sc1ciclo} .
	
\end{proof}	

\begin{corollary}\label{sc2cycle}
	There is $Z_{XY} \in \mathfrak{X}^{S^1}_2$ that has exactly two crossing limit cycle that intersects $S^1$ in two points.
\end{corollary}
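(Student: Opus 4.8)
The plan is to exhibit an explicit pair $(X,Y)$ in $\mathfrak{X}^{S^1}_2$ that realizes the upper bound of two established in Theorem~\ref{twocycle}, and then to verify that both closed curves produced by the closing equations are genuine crossing limit cycles. First I would select a linear saddle field $X$ with $\operatorname{div}X=0$ and a linear center field $Y$, each with suitably chosen (rational) coefficients, and record their first integrals as furnished by Proposition~\ref{clossingequation}: since the fields are linear with zero divergence, $H_1(x,y)=-\zeta x+\eta y+axy-\tfrac{c}{2}x^2+\tfrac{b}{2}y^2$ and $H_2$ is its analogue for $Y$. The point of the construction is that the qualitative behavior depends sensitively on these coefficients, and only a carefully tuned choice yields two cycles rather than zero, one, or infinitely many.

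Next, following the pattern of the preceding corollaries, I would write the system of closing equations $H_1(p,q)=H_1(r,s)$, $H_2(p,q)=H_2(r,s)$, $p^2+q^2=1$, $r^2+s^2=1$, subject to $(p,q)\neq(r,s)$, and solve it with a Gröbner basis computation. The coefficients must be arranged so that the elimination ideal yields exactly four real solutions $(p,q,r,s)$, organized into two pairs of the form $(p,q,r,s)$ and $(r,s,p,q)$. Each such pair describes one and the same closed curve traversed through its two contact points with $S^1$, so two distinct pairs correspond to two distinct closed curves.

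Then I would verify that the four intersection points so obtained actually lie in the crossing region $\Sigma^c$ of $S^1$. Recalling $h(x,y)=x^2+y^2-1$ so that $\nabla h=(2x,2y)$, I would compute the Lie derivatives $Xh=\langle X,\nabla h\rangle$ and $Yh=\langle Y,\nabla h\rangle$ and check at each of the four points that $Xh\cdot Yh>0$. Provided this holds, both closed curves are genuine crossing limit cycles, and Theorem~\ref{twocycle} guarantees there can be no others; hence $Z_{XY}$ has exactly two.

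The hard part will be the search for coefficients that simultaneously force \emph{two} geometrically distinct solutions of the algebraic system while keeping all four contact points in the crossing region; this is essentially a constrained existence problem solved by a computer-algebra search, and the verification of the crossing inequalities at the resulting (typically irrational) points is the step most prone to error. I would also confirm that the two cycles are genuinely distinct as sets, i.e. that the two solution pairs are not related by the symmetry that would collapse them to a single orbit.
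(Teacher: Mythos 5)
Your proposal follows essentially the same route as the paper: the authors exhibit an explicit pair $X(x,y)=\left(-\tfrac{2}{3}+\tfrac{2x}{3}-\tfrac{5y}{3},\,4-4x-\tfrac{2y}{3}\right)$, $Y(x,y)=\left(-\tfrac{2}{3}-\tfrac{2x}{3}-\tfrac{y}{2},\,4+4x+\tfrac{2y}{3}\right)$, solve the closing equations by a Gr\"obner basis to obtain two distinct solution pairs, and verify $Xh\cdot Yh>0$ at all four contact points. The only thing missing from your outline is the actual choice of coefficients, which is the computational heart of the argument, but your method and verification steps coincide with the paper's.
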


\begin{proof}
	
	Consider the piecewise smooth vector field $Z_{XY}$, on which $$X(x,y)=\left(-\frac{2}{3}+\frac{2 x}{3}-\frac{5 y}{3},4-4 x-\frac{2 y}{3}\right)\quad \mbox{and} \quad Y(x,y)=\left(-\frac{2}{3}-\frac{2 x}{3}-\frac{y}{2},4+4 x+\frac{2 y}{3}\right).$$
	The fields $X$ and $Y$ have the following first integrals  
	$$H_1(x,y)= -4x-\frac{2 y}{3} +\frac{2 x y}{3} + 2 x^2-\frac{5 y^2}{6}\quad \mbox{and} \quad H_2(x,y)=-4 x -\frac{2 y}{3} -\frac{2 x y}{3}-2 x^2-\frac{y^2}{4},$$
	respectively.
	
	To study the existence of crossing limit cycles of the field $Z_{XY}$ we use the closing equations, with which we obtain the system of nonlinear equations
	$$\left\lbrace \def\arraystretch{1.5} \begin{array}{l}
		-4p-\frac{2 q}{3} +\frac{2 p q}{3} + 2 p^2-\frac{5 q^2}{6}=-4r-\frac{2 s}{3} +\frac{2 r s}{3} + 2 r^2-\frac{5 s^2}{6},\\
		-4 p -\frac{2 q}{3} -\frac{2 p q}{3}-2 p^2-\frac{q^2}{4}=-4 r -\frac{2 s}{3} -\frac{2 r s}{3}-2 r^2-\frac{s^2}{4},\\
		p^2+q^2=1,\\
		r^2+s^2=1,
	\end{array}\right. $$
	where $(p,q,r,s)$ satisfies $(p,q)\neq (r,s)$. To solve this system we used the Grobner base, obtaining the solutions  $(p_1,q_1,r_1,s_1)$, $(r_1,s_1,p_1,q_1)$, $(p_2,q_2,r_2,s_2)$ and $(r_2,s_2,q_2,s_2)$, where
	$$p_1=-\frac{1}{\sqrt{37}},\;q_1=\frac{6}{\sqrt{37}},\;r_1=\frac{1}{\sqrt{37}},\;s_1=-\frac{6}{\sqrt{37}},$$
	
	$$p_2= \frac{1}{26} \left(-\sqrt{3281}-\sqrt{\frac{13604122}{\sqrt{3281}}-237494}+41\right),$$
	$$q_2=\frac{1}{416} \left(96 \sqrt{3281}+\sqrt{13604122 \sqrt{3281}-779217814}-5536\right)+\frac{55}{208} \sqrt{\frac{6802061}{2 \sqrt{3281}}-\frac{118747}{2}},$$
	$$r_2=\frac{1}{26} \left(41-\sqrt{3281}\right)+\frac{1}{13} \sqrt{\frac{6802061}{2 \sqrt{3281}}-\frac{118747}{2}},$$
	$$s_2=\frac{1}{416} \left(96 \sqrt{3281}-\sqrt{13604122 \sqrt{3281}-779217814}-5536\right)-\frac{55}{208} \sqrt{\frac{6802061}{2 \sqrt{3281}}-\frac{118747}{2}},$$
	that define two closed curves. 
	
	To complete, we check if these points are in the crossing region. Remember that $h(x,y)=x^2+y^2-1$, then $\nabla h=(2x,2y)$. So,
	$$Xh(x,y)=\left\langle  \left(-\frac{2}{3}+\frac{2 x}{3}-\frac{5 y}{3},4-4 x-\frac{2 y}{3}\right), (2x,2y) \right\rangle = \frac{2}{3} \left(2 x^2-x (17 y+2)-2 (y-6) y\right),$$
	$$Yh(x,y)=\left\langle  \left(-\frac{2}{3}-\frac{2 x}{3}-\frac{y}{2},4+4 x+\frac{2 y}{3}\right), (2x,2y) \right\rangle =\frac{1}{3} \left(-4 x^2+x (21 y-4)+4 y (y+6)\right).$$  
	Thus, 
	$$Xh(p_1,q_1)= \frac{4 \sqrt{37}}{3}+\frac{64}{111}>0,\quad Yh(p_1,q_1) = \frac{2}{111} \left(74 \sqrt{37}+7\right)>0,$$
	$$Xh(r_1,s_1) =\frac{64}{111}-\frac{4 \sqrt{37}}{3}<0, \quad Yh(r_1,s_1) = \frac{14}{111}-\frac{4 \sqrt{37}}{3}<0,$$
	{\small $$Xh(p_2,q_2)= \frac{385 \sqrt{\frac{1}{2} \left(6802061 \sqrt{3281}-389608907\right)}}{1352}+\frac{4234}{39}-\frac{31963 \sqrt{\frac{6802061}{2 \sqrt{3281}}-\frac{118747}{2}}}{4056}-\frac{80706}{13 \sqrt{3281}}>0,$$ 
		$$Yh(p_2,q_2) = -\frac{971 \sqrt{\frac{1}{2} \left(6802061 \sqrt{3281}-389608907\right)}}{8112}+\frac{70595 \sqrt{\frac{6802061}{2 \sqrt{3281}}-\frac{118747}{2}}}{8112}+\frac{4234}{39}-\frac{80706}{13 \sqrt{3281}}>0,$$
		$$Xh(r_2,s_2) =-\frac{385 \sqrt{\frac{1}{2} \left(6802061 \sqrt{3281}-389608907\right)}}{1352}+\frac{31963 \sqrt{\frac{6802061}{2 \sqrt{3281}}-\frac{118747}{2}}}{4056}+\frac{4234}{39}-\frac{80706}{13 \sqrt{3281}}<0,$$ 
		$$Yh(r_2,s_2) = \frac{971 \sqrt{\frac{1}{2} \left(6802061 \sqrt{3281}-389608907\right)}}{8112}+\frac{4234}{39}-\frac{70595 \sqrt{\frac{6802061}{2 \sqrt{3281}}-\frac{118747}{2}}}{8112}-\frac{80706}{13 \sqrt{3281}}<0,$$}
	which imply 
	$$Xh(p_1,q_1)Yh(p_1,q_1)>0,\;\; Xh(r_1,s_1)Yh(r_1,s_1)>0,$$
	$$ Xh(p_2,q_2)Yh(p_2,q_2)>0,\;\; Xh(r_2,s_2)Yh(r_2,s_2)>0,$$ 
	that is, the points belong to the crossing region of the discontinuity manifold $S^1$.
	
	Therefore, $Z_{XY}$ has two crossing limit cycles that intersect $S^1$ in two points, as we illustrate in Figure \ref{sc2ciclos} .
\end{proof}

The next result shows the existence of a homoclinic orbit on vector fields belonging to $\mathfrak{X}^{S^1}_1$.

\begin{corollary}\label{selacentrociclohet}
	There is $Z_{XY} \in \mathfrak{X}^{S^1}_2$ that admits exactly one crossing limit cycle and one homoclinic orbit that intersect $S^1$ in two points.
\end{corollary}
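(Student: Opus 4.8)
The plan is to exhibit an explicit $Z_{XY}\in\mathfrak{X}^{S^1}_2$, following the same template used in the preceding corollaries, and to read off the homoclinic orbit as a degenerate solution of the closing equations. The key observation is that a crossing limit cycle and a homoclinic loop satisfy \emph{the same} system of closing equations coming from Proposition~\ref{clossingequation}: in both cases the two crossing points $(p,q),(r,s)\in S^1$ are joined by an arc of a level set of $H_1$ inside the disk and by an arc of a level set of $H_2$ outside. The only difference is that for a genuine limit cycle the interior arc is a regular branch of $H_1=\mathrm{const}$, whereas for a homoclinic loop the common level value equals the critical value $H_1(P_0)$, where $P_0$ is the saddle of $X$, so that the interior connection runs along the two separatrices through $P_0$.

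First I would choose $X$ to be a linear saddle with $\mathrm{div}\,X=0$ whose equilibrium $P_0$ lies strictly inside the unit disk and whose separatrices meet $S^1$, and $Y$ to be a linear center; concretely one writes $X(x,y)=(\eta+ax+by,\zeta+cx-ay)$ with $a^2+bc>0$ and solves $X=0$ to locate $P_0$, requiring $\|P_0\|<1$. Using Proposition~\ref{clossingequation} I would compute the first integrals $H_1(x,y)=-\zeta x+\eta y+axy-\tfrac{c}{2}x^2+\tfrac{b}{2}y^2$ and the corresponding $H_2$, assemble the four closing equations, and solve them with a Gröbner basis exactly as in Corollary~\ref{sc2cycle}. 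By Theorem~\ref{twocycle} at most two closed curves arise, so I would tune the parameters so that precisely two solution pairs survive.

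Next I would classify the two solutions. For the first pair I would verify, via the sign test $Xh\cdot Yh>0$ at both points, that the crossing points lie in the crossing region, and check that the interior level value differs from $H_1(P_0)$, so this pair produces an ordinary crossing limit cycle. For the second pair I would again confirm membership in the crossing region and then verify the \emph{homoclinic condition} $H_1(p,q)=H_1(r,s)=H_1(P_0)$, together with the fact that $P_0$ lies inside the disk and that the two separatrix rays through $P_0$ reach $S^1$ precisely at $(p,q)$ and $(r,s)$; the exterior $H_2$-arc then closes the loop at $P_0$, giving a homoclinic orbit that meets $S^1$ in two points.

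The main obstacle is the simultaneous tuning: the parameters must be arranged so that one closing-equation solution sits exactly on the separatrix level $H_1=H_1(P_0)$ (an extra algebraic equation constraining $\eta,\zeta,a,b,c$ and the center coefficients) while the other solution is a nondegenerate cycle, and so that both crossing pairs fall in the crossing region rather than on a sliding or escaping arc. Imposing the critical-value condition exactly, and then checking the four sign inequalities for the resulting (typically irrational) coordinates, is the delicate part; once a working parameter choice is found, the verification reduces to the routine Gröbner-basis-plus-sign-check computation already illustrated in the previous corollaries.
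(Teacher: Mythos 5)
Your plan is essentially the paper's proof: the authors exhibit an explicit saddle--center pair, solve the closing equations by Gr\"obner basis to obtain two closed curves, verify both crossing pairs satisfy $Xh\cdot Yh>0$, and identify the homoclinic loop exactly by your criterion, namely that the interior level value equals $H_1$ evaluated at the saddle of $X$ (so the inner arc runs along the separatrices). The only cosmetic difference is that in the paper's example the saddle sits on $S^1$ itself at $(1,0)$ rather than strictly inside the disk, which does not change the argument.
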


\begin{proof}
	Consider the piecewise smooth vector field $Z_{XY}$, on which $$X(x,y)=(x-4 y-1,-4 x-y+4)\quad \mbox{and} \quad Y(x,y)=(-x-4 y-1,4 x+y+4).$$
	The fields $X$ and $Y$ have the following first integrals 
	$$H_1(x,y)=2 x^2+x y-4 x-2 y^2-y\quad \mbox{and}\quad H_2(x,y)=-2 x^2-x y-4 x-2 y^2-y,$$
	respectively.
	
	To study the existence of crossing limit cycles of the field $Z_{XY}$ we use the closing equations, with which we get the system of nonlinear equations
	$$\left\lbrace \def\arraystretch{1.5} \begin{array}{l}
		2 p^2+p q-4 p-2 q^2-q=2 r^2+r s-4 r-2 s^2-s,\\
		-2 p^2-p q-4 p-2 q^2-q=-2 r^2-r s-4 r-2 s^2-s,\\
		p^2+q^2=1,\\
		r^2+s^2=1,
	\end{array}\right. $$
	where $(p,q,r,s)$ satisfies $(p,q)\neq (r,s)$. To solve this system we used the Grobner base, obtaining the solutions $(p_1,q_1,r_1,s_1)$, $(r_1,s_1,p_1,q_1)$, $(p_2,q_2,r_2,s_2)$ and $(r_2,s_2,p_2,q_2)$, where
	$$p_1=-\frac{1}{\sqrt{17}},\;q_1=\frac{4}{\sqrt{17}},\;r_1=\frac{1}{\sqrt{17}},\;s_1-\frac{4}{\sqrt{17}},$$
	
	$$p_2= \frac{1}{4} \left(-\sqrt{5}-\sqrt{\frac{344}{\sqrt{5}}-153}+2\right),\; q_2=\sqrt{5}+\frac{1}{2} \sqrt{\frac{344}{\sqrt{5}}-153}+\frac{1}{4} \sqrt{344 \sqrt{5}-765}-\frac{9}{4},$$
	$$r_2=\frac{1}{20} \left(-5 \sqrt{5}+\sqrt{5 \left(344 \sqrt{5}-765\right)}+10\right),\; s_2=\sqrt{5}-\frac{1}{2} \sqrt{\frac{344}{\sqrt{5}}-153}-\frac{1}{4} \sqrt{344 \sqrt{5}-765}-\frac{9}{4},$$
	that represent two closed curves. 
	
	To complete, we check if these points are in the crossing region. Remember that $h(x,y)=x^2+y^2-1$, then $\nabla h=(2x,2y)$. Thus,
	$$Xh(x,y)=\left\langle  (x-4 y-1,-4 x-y+4), (2x,2y) \right\rangle = \frac{2}{3} \left(2 x^2-x (17 y+2)-2 (y-6) y\right),$$
	$$Yh(x,y)=\left\langle  (-x-4 y-1,4 x+y+4), (2x,2y) \right\rangle =-2 \left(x^2+x-y (y+4)\right).$$  
	Hence, 
	$$Xh(p_1,q_1)= 2 \left(\sqrt{17}+1\right)>0,\quad Yh(p_1,q_1) = 2 \sqrt{17}+\frac{30}{17}>0,$$
	$$Xh(r_1,s_1) =2-2 \sqrt{17}<0, \quad Yh(r_1,s_1) = \frac{30}{17}-2 \sqrt{17}<0,$$
	{\small $$Xh(p_2,q_2)= \frac{1}{10} \left(-77 \sqrt{5}+65 \sqrt{344 \sqrt{5}-765}-9 \sqrt{5 \left(344 \sqrt{5}-765\right)}+190\right)>0,$$ 
		$$Yh(p_2,q_2) = \frac{1}{10} \left(-77 \sqrt{5}+15 \sqrt{344 \sqrt{5}-765}+11 \sqrt{5 \left(344 \sqrt{5}-765\right)}+190\right)>0,$$
		$$Xh(r_2,s_2) =\frac{1}{10} \left(-77 \sqrt{5}-65 \sqrt{344 \sqrt{5}-765}+9 \sqrt{5 \left(344 \sqrt{5}-765\right)}+190\right)<0,$$ 
		$$Yh(r_2,s_2) = \frac{1}{10} \left(-77 \sqrt{5}-15 \sqrt{344 \sqrt{5}-765}-11 \sqrt{5 \left(344 \sqrt{5}-765\right)}+190\right)<0,$$}
	which imply 
	$$Xh(p_1,q_1)Yh(p_1,q_1)>0,\;\; Xh(r_1,s_1)Yh(r_1,s_1)>0,$$
	$$ Xh(p_2,q_2)Yh(p_2,q_2)>0,\;\; Xh(r_2,s_2)Yh(r_2,s_2)>0,$$ 
	that is, the points belong to the crossing region of the discontinuity manifold $S^1$.
	
	Now, we are going to verify that the closed curve passing through $(p_1,q_1)$ and $(r_1,s_1)$ is a homoclinic orbit. We know that the field $X$ is a saddle and the level curves of the function $H_1(x,y)$ outline the phase portrait of the field. Thus, evaluating $H_1$ in the singularity $(1,0)$, we have $H_1(1,0)=-2$, then, the curve $H_1(x,y)=-2$ represents the stable manifold and the unstable manifold of the field $X$. We note that the points $(p_1,q_1)$ and $(r_1,s_1)$ belong to the curve $H_1(x,y)=-2$, because $H_1(p_1,q_1)=-2$ and $H_1(r_1,s_1)=-2$. Hence, we conclude the existence of a homoclinic orbit. 
	
	Therefore, $Z_{XY}$ has one crossing limit cycle and one homoclinic orbit that intersect $S^1$ in two points, as we illustrate in Figure \ref{selacentrohet}.
	
\end{proof}	

\begin{figure}[h]
	\begin{subfigure}{0.33\textwidth}
		\centering
		\includegraphics[scale=0.3]{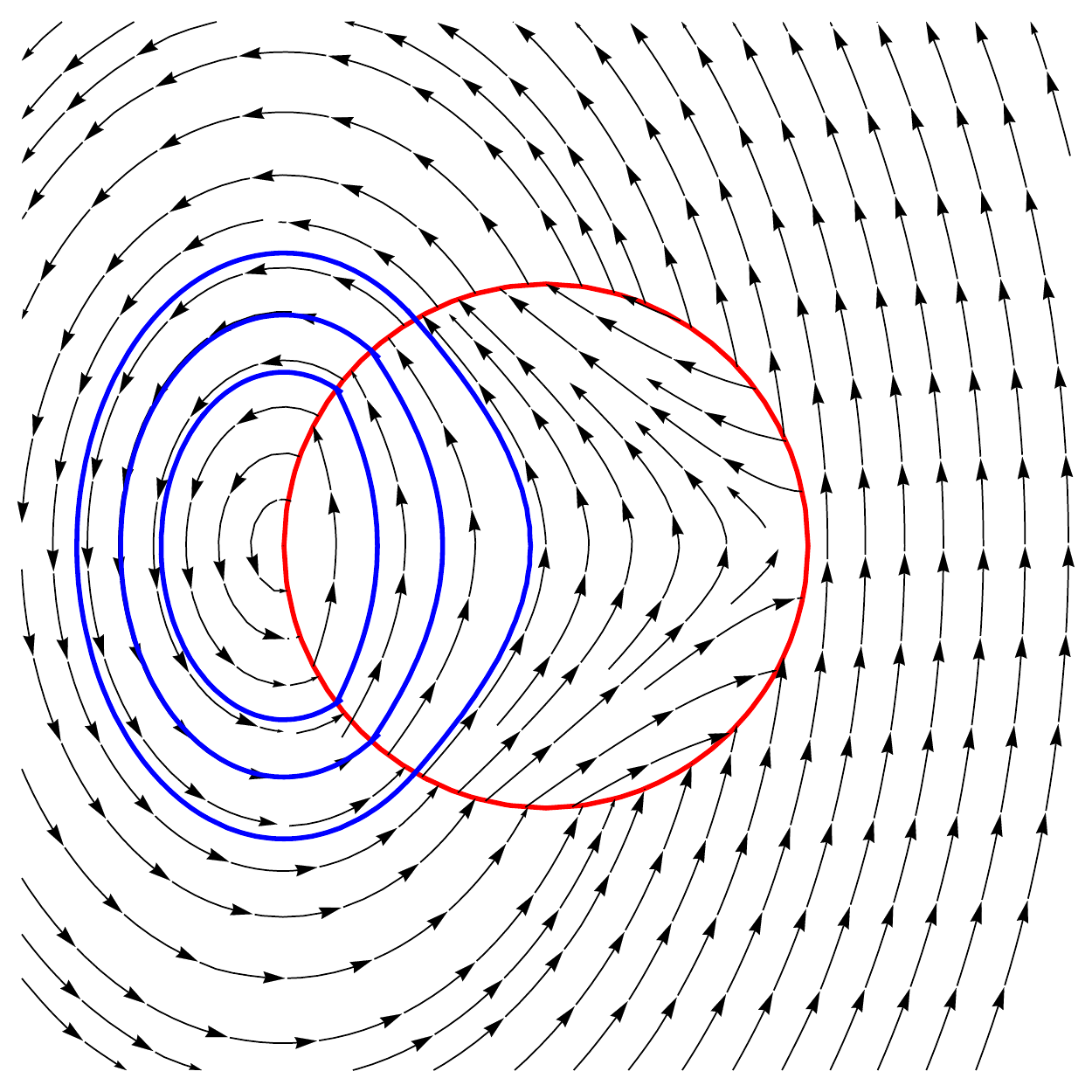}
		\caption{Periodic orbits}
		\label{scorbper}
	\end{subfigure}%
	\begin{subfigure}{0.33\textwidth}
		\centering
		\includegraphics[scale=0.3]{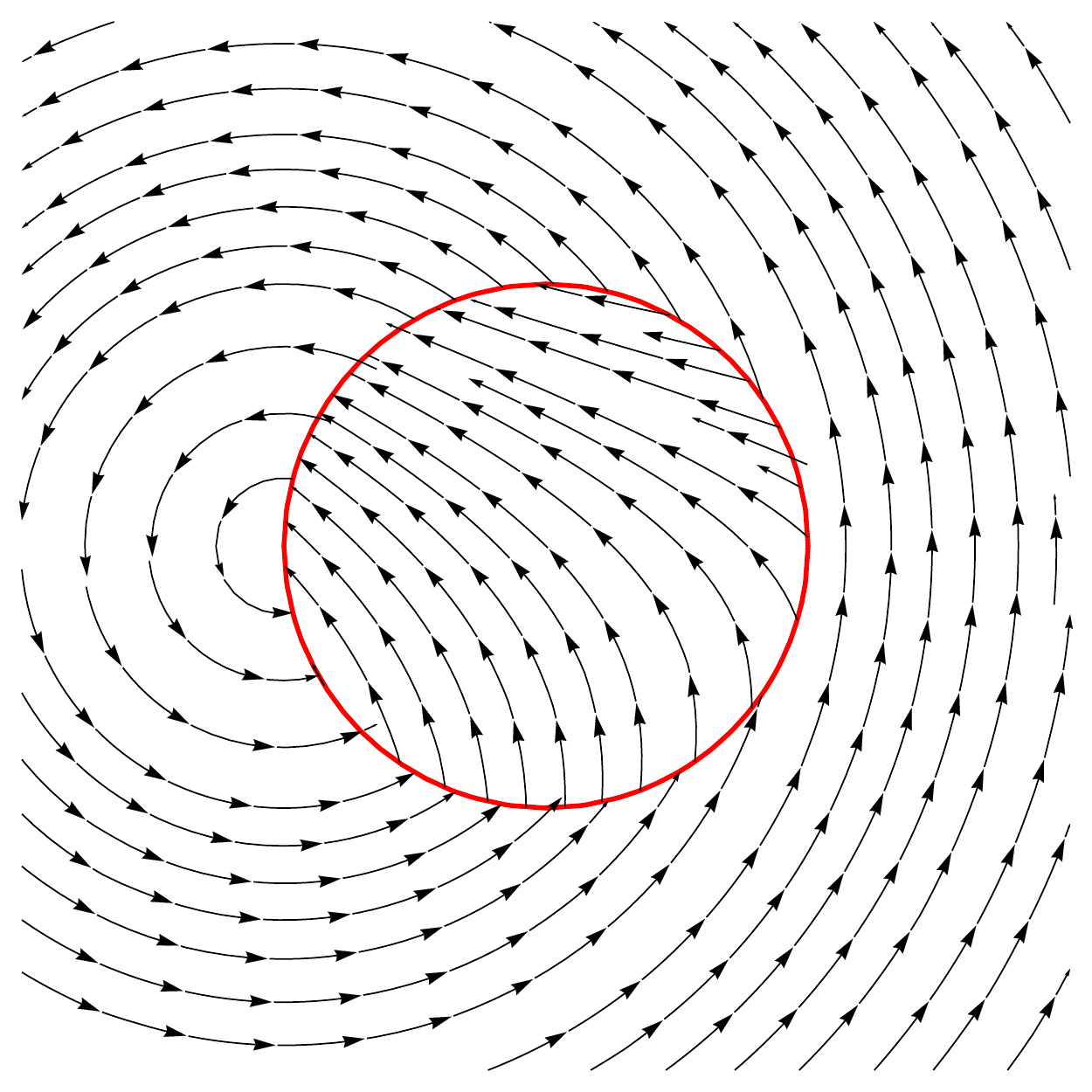}
		\caption{Without periodic orbits}
		\label{scsemorb}
	\end{subfigure}%
	\begin{subfigure}{0.33\textwidth}
		\centering
		\includegraphics[scale=0.3]{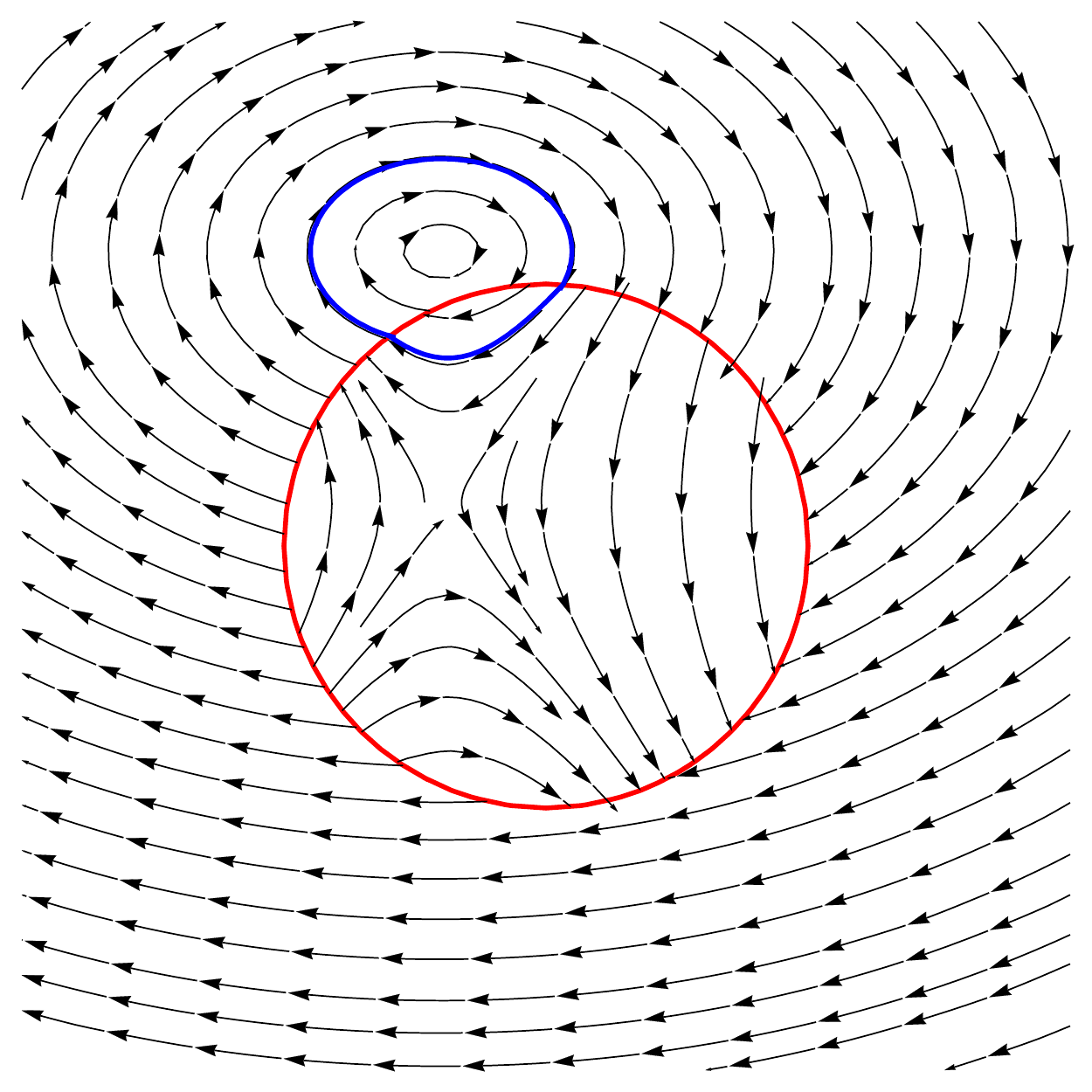}
		\caption{ One limit cycles}
		\label{sc1ciclo}
	\end{subfigure}%
	\\
	\begin{subfigure}{0.33\textwidth}
		\centering
		\includegraphics[scale=0.3]{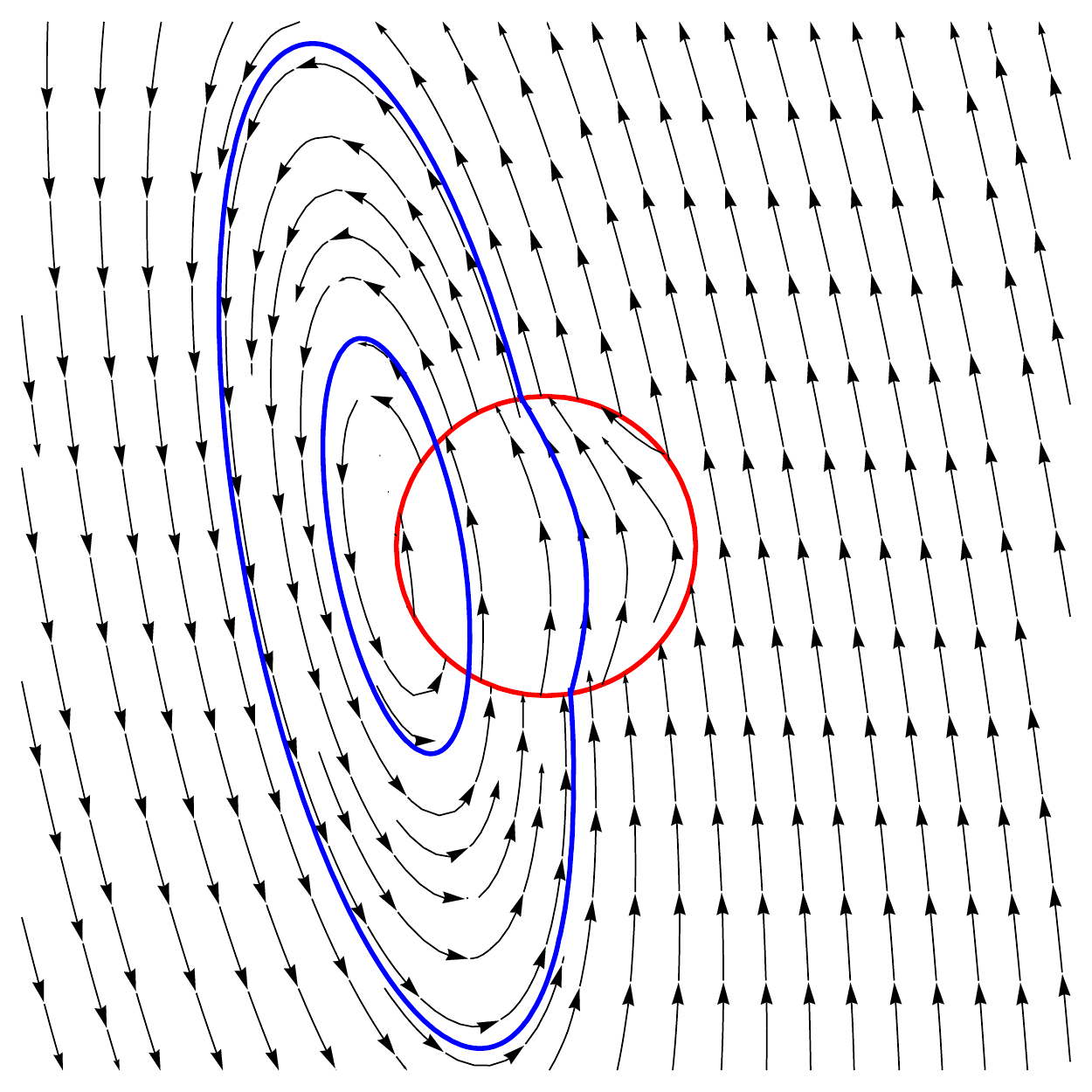}
		\caption{Two-limit cycles}
		\label{sc2ciclos}
	\end{subfigure}%
	\begin{subfigure}{0.33\textwidth}
		\centering
		\includegraphics[scale=0.3]{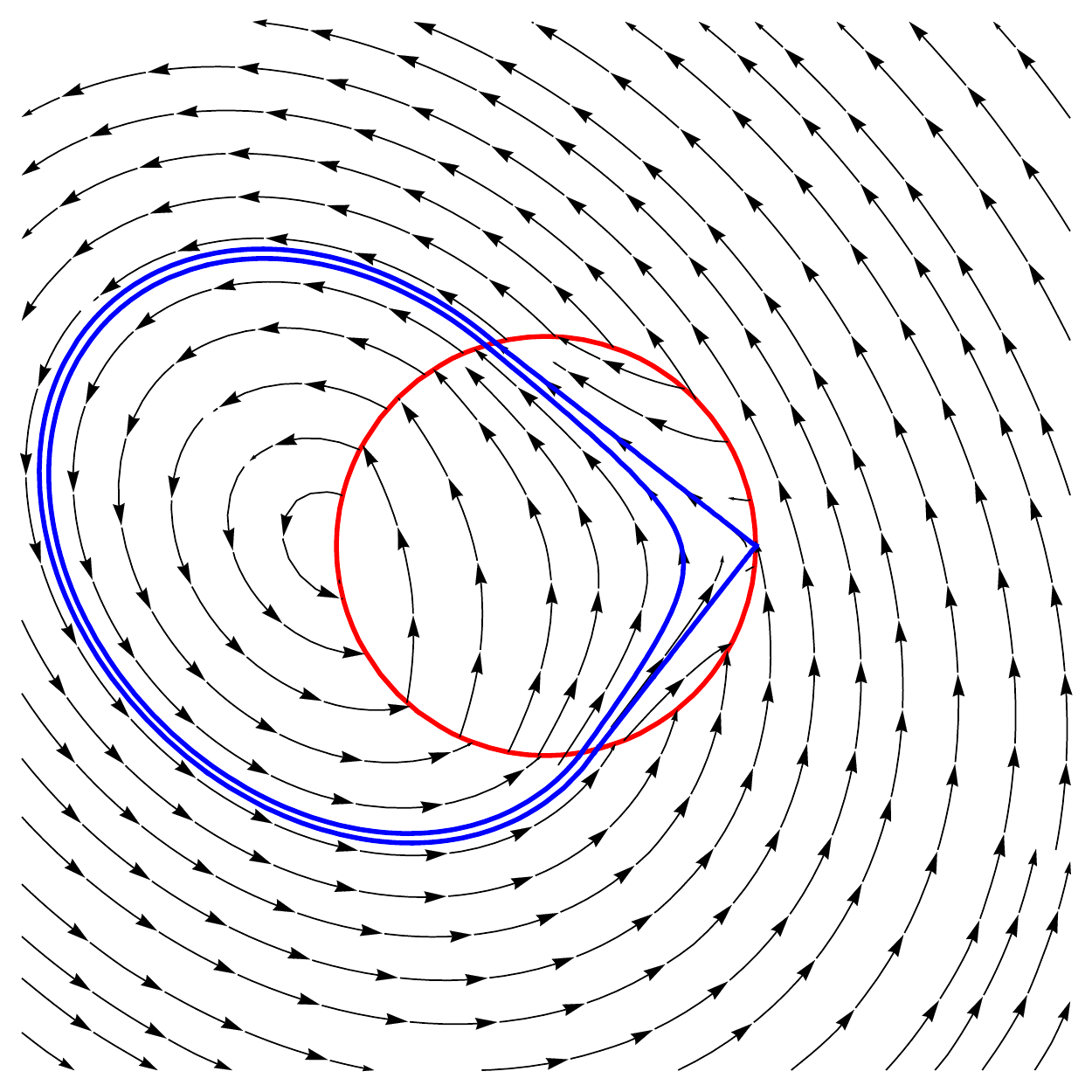}
		\caption{Homoclinic orbit}
		\label{selacentrohet}
	\end{subfigure}%
	\caption{Phase portrait of vector fields $Z_{XY}\in \mathfrak{X}^{S^1}_2$.}
	\label{caso1}
\end{figure} 

\begin{proof}[Proof of Theorem B]
	From the Theorem \ref{twocycle} and Corollary \ref{sc2cycle} we have that piecewice smooth vector fields in $\mathfrak{X}^{S^1}_2$ admit at most two crossing limit cycles that intersect $S^1$ in two points. 
\end{proof}

\subsection{Center-saddle case}

In this subsection, we show that there are vector fields in $\mathfrak{X}^{S^1}_3$ that admit an infinite number of crossing periodic orbits, vector fields that do not admit periodic orbits and vector fields that have only one crossing limit cycle. To do so, we are going to present examples of each case.

Firstly, we note that by the Theorem \ref{twocycle} the maximum number of crossing limit cycles that intersect $S^1$ in two points of $Z_{XY}\in \mathfrak{X}^{S^1}_3$ is less than or equal to two, that is, the  Theorem C holds for this case. 

\begin{corollary}
	There is $Z_{XY} \in \mathfrak{X}^{S^1}_3$ that admits an infinite number of crossing periodic orbits that intersect $S^1$ in two points. 
\end{corollary}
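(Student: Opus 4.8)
The plan is to exhibit an explicit example and exploit a reflection symmetry, exactly as in the analogous infinite-orbit corollaries already proved for the constant and saddle--center classes. First I would choose an affine center $X$ and an affine saddle $Y$ with $\mathrm{div}\,Y=0$ whose first integrals are both \emph{even in} $y$; a convenient choice is
\[
X(x,y)=(6y,\,4-4x),\qquad Y(x,y)=(4y,\,8+8x),
\]
for which the construction in the proof of Proposition~\ref{clossingequation} yields
\[
H_1(x,y)=-4x+2x^2+3y^2,\qquad H_2(x,y)=-8x-4x^2+2y^2 .
\]
Here $X$ has its singularity at $(1,0)$ with Jacobian of trace $0$ and determinant $24>0$ (a center), while $Y$ has its singularity at $(-1,0)$ with determinant $-32<0$ (a saddle) and $\mathrm{div}\,Y=0$, so that $Z_{XY}\in\mathfrak{X}^{S^1}_3$. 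The decisive feature is that both $H_1$ and $H_2$ are invariant under $(x,y)\mapsto(x,-y)$.

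Next I would write down the closing equations from Proposition~\ref{clossingequation}, namely $H_1(p,q)=H_1(r,s)$, $H_2(p,q)=H_2(r,s)$, $p^2+q^2=1$, $r^2+s^2=1$, and use the circle constraints to substitute $q^2=1-p^2$ and $s^2=1-r^2$. The first two equations then collapse to $-4p-p^2=-4r-r^2$ and $-8p-6p^2=-8r-6r^2$. The former factors as $(r-p)(r+p+4)=0$, and since $|p|,|r|\le 1$ the factor $r+p+4$ cannot vanish, forcing $r=p$. With $r=p$ the second equation holds automatically, and the circle constraint gives $s^2=1-r^2=q^2$, i.e. $s=\pm q$. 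Because the hypothesis $(p,q)\ne(r,s)$ excludes $s=q$, one is left with the one-parameter family $(p,q,r,s)=(p,q,p,-q)$ with $p\in(-1,1)$ and $q\ne 0$.

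Finally I would conclude that each symmetric pair $\{(p,q),(p,-q)\}$ determines a crossing periodic orbit of $Z_{XY}$, obtained by joining the inside arc along the level curve of $H_1$ to the outside arc along the level curve of $H_2$ at the two reflected points, and that letting $p$ vary over $(-1,1)$ produces infinitely many such orbits (to be displayed in the accompanying figure). As in the earlier infinite-orbit corollaries, I expect the main obstacle to be not the algebra, which the $y$-symmetry trivializes, but the geometric realization: one must check that the prescribed endpoints lie in the crossing region $\Sigma^{c}$ and that the chosen inside and outside arcs meet $S^1$ only at the two intended points, so that they genuinely glue into simple crossing periodic orbits. I would verify this directly for the explicit fields above, it being the reflection-symmetric analogue of the verification already carried out for $\mathfrak{X}^{S^1}_2$.
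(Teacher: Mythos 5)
Your proposal is correct and follows essentially the same strategy as the paper's proof: exhibit an explicit center--saddle pair whose first integrals share a reflection symmetry, reduce the closing equations of Proposition~\ref{clossingequation} via $q^2=1-p^2$, $s^2=1-r^2$, and obtain a one-parameter family of solutions. The only difference is cosmetic — the paper's example is symmetric under $(x,y)\mapsto(-x,y)$ and yields solutions $(p,q,-p,q)$, while yours is symmetric under $(x,y)\mapsto(x,-y)$ and yields $(p,q,p,-q)$ — and your closing remark about still having to verify the crossing-region condition is, if anything, more careful than the paper, which omits that check for its infinite-orbit examples.
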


\begin{proof}
	
	Consider the piecewise smooth vector field $Z_{XY}$, on which
	$$X(x,y)=(-6+2y, -14x)\quad \mbox{and} \quad Y(x,y)=(7-2y, -18x).$$
	The fields $X$ and $Y$ have the following first integrals
	$$H_1(x,y)=-6y + 7x^2 +y^2\quad \mbox{and}\quad H_2(x,y)=7y +9x^2 -y^2,$$
	respectively. 
	
	To study the existence of crossing limit cycles of the field $Z_{XY}$ we use the closing equations, with which we obtain the system of nonlinear equations
	$$\left\lbrace \def\arraystretch{1.5} \begin{array}{l}
		-6q + 7p^2 +q^2=-6s + 7r^2 +s^2,\\
		7q +9p^2 -q^2=7s +9r^2 -s^2,\\
		p^2+q^2=1,\\
		r^2+s^2=1,
	\end{array}\right. $$
	where $(p,q,r,s)$ satisfies $(p,q)\neq (r,s)$. From the closing equations we have that $q^2=1-p^2$ and $s^2=1-r^2$, thus getting the system  
	$$\left\lbrace \def\arraystretch{1.5} \begin{array}{l}
		-6q + 7p^2 +1-p^2=-6s + 7r^2 +1-r^2,\\
		7q +9p^2 -1+p^2=7s +9r^2 -1+s^2,
	\end{array}\right. \Rightarrow 
	\left\lbrace \def\arraystretch{1.5} \begin{array}{l}
		-6q + 6p^2=-6s + 6r^2,\\
		7q +10p^2=7s +10r^2,
	\end{array}\right. \Rightarrow$$
	$$ 
	\left\lbrace \def\arraystretch{1.5} \begin{array}{l}
		-q + p^2=-s + r^2,\\
		7q +10p^2=7s +10r^2,
	\end{array}\right. \Rightarrow \left\lbrace \def\arraystretch{1.5} \begin{array}{l}
		-7q + 7p^2=-7s + 7r^2,\\
		7q +10p^2=7s +10r^2,
	\end{array}\right. \Rightarrow 17p^2=17r^2 \Rightarrow r=\pm p.$$
	So, substituting the obtained information on the first equation of the initial system, we get  
	$$-6q + 7p^2 +q^2=-6s + 7r^2 +s^2 \Rightarrow -6q+7p^2+1-p^2=-6s+7p^2+1-p^2 \Rightarrow q=s.$$
	Thus, $(p,q,r,s)=(p,q,\pm p,q)$, and since $(p,q)\neq (r,s)$ we cannot have $r=p$. Hence, $(p,q,r,s)=(p,q,-p, q)$.
	
	Therefore, $Z_{XY}$ admits an infinite number of crossing periodic orbits that intersect $S^1$ in two points, which we illustrate in Figure \ref{scconjorb}.
	
\end{proof}

\begin{corollary}
	There is $Z_{XY} \in \mathfrak{X}^{S^1}_3$ that does not have crossing periodic orbits that intersect $S^1$ in two points. 
\end{corollary}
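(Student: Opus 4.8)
The plan is to exhibit an explicit witness, following the template of the preceding corollaries: pick a linear center $X$ in the interior disk and a trace-free linear saddle $Y$ in the exterior, compute their quadratic first integrals via Proposition \ref{clossingequation}, and show that the resulting closing system forces $(p,q)=(r,s)$, which the standing hypothesis $(p,q)\neq(r,s)$ forbids. Concretely I would take $X(x,y)=(\eta+by,\zeta+cx)$ with $bc<0$, so that the linear part has purely imaginary eigenvalues and the singularity is a center, and $Y(x,y)=(\delta+ky,\varepsilon+mx)$ with $km>0$ and zero trace, so that $Y$ is a saddle with $\mathrm{div}\,Y=0$; verifying these two sign conditions is what certifies $Z_{XY}\in\mathfrak{X}^{S^1}_3$. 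By Proposition \ref{clossingequation} the first integrals are $H_1(x,y)=-\zeta x+\eta y-\tfrac{c}{2}x^2+\tfrac{b}{2}y^2$ and $H_2(x,y)=-\varepsilon x+\delta y-\tfrac{m}{2}x^2+\tfrac{k}{2}y^2$, and the closing equations are $H_1(p,q)=H_1(r,s)$, $H_2(p,q)=H_2(r,s)$, $p^2+q^2=1$, $r^2+s^2=1$.

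I would then substitute $q^2=1-p^2$ and $s^2=1-r^2$, exactly as in the infinite-orbit and no-orbit examples already treated, to remove the squared second coordinates. Setting $P=p-r$ and $Q=q-s$, the $H_1$-equation collapses on $S^1$ to a linear relation $Q=\lambda P$ (choosing the center so that $c=-b$ makes its quadratic part radially symmetric, which is what produces the clean linear relation), while the $H_2$-equation retains one quadratic term and reduces to a relation containing a $P\,(p+r)$ factor. Eliminating $Q$ between the two leaves a single equation of the form $P\,(p+r-\kappa)=0$, with $\kappa$ an explicit constant in the coefficients $\zeta,\eta,\delta,\varepsilon,k,m$. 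Hence every solution satisfies either $P=0$ — which back-substitutes to $Q=0$ and therefore $(p,q)=(r,s)$ — or $p+r=\kappa$.

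The one genuinely delicate point, and the only place this case diverges from the infinite-orbit construction above, is disposing of the branch $p+r=\kappa$. In that earlier example both first integrals were even in $x$, so the reflection $x\mapsto -x$ was a symmetry of the whole system and kept the off-diagonal family $r=-p$ alive, giving infinitely many crossing orbits. To obtain \emph{no} orbits I would break this symmetry by retaining honest linear terms and tuning the numerical coefficients so that $|\kappa|>2$. Since every point of $S^1$ has $|p|\le1$ and $|r|\le1$, we get $|p+r|\le2<|\kappa|$, so the branch $p+r=\kappa$ is empty and only the diagonal solution $(p,q)=(r,s)$ survives, which is excluded. A convenient explicit realization is $X(x,y)=(1+y,\,1-x)$ and $Y(x,y)=(3+y,\,x)$: here $\lambda=1$ and $\kappa=3$, the center and saddle conditions $bc=-1<0$ and $km=1>0$ hold, and the only remaining work is to record the substitutions that produce $P\,(p+r-3)=0$. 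I expect this last tuning step — ensuring the surviving branch lands outside $[-2,2]$ rather than inside it — to be the main obstacle, since it is precisely the feature that distinguishes the no-orbit regime from the infinite-orbit regime.
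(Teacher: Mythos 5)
Your proposal is correct and follows essentially the same strategy as the paper: exhibit an explicit center--saddle pair, write the closing equations from Proposition \ref{clossingequation}, restrict to $p^2+q^2=r^2+s^2=1$, and show the only solutions satisfy $(p,q)=(r,s)$, which is excluded. The paper's witness ($X=(2-2y,2x)$, $Y=(-2y-2,2-2x)$) forces the diagonal by direct substitution, whereas your example $X=(1+y,1-x)$, $Y=(3+y,x)$ does it via the factorization $(p-r)(p+r-3)=0$ together with $|p+r|\le 2$; both computations check out, so this is only a cosmetic difference.
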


\begin{proof}
	
	Consider the piecewise smooth vector field $Z_{XY}$, where 
	$$X(x,y)=(2-2 y,2 x)\quad \mbox{and} \quad Y(x,y)=(-2 y-2,2-2 x).$$
	The fields $X$ and $Y$ have the following first integrals
	$$H_1(x,y)=2y-x^2-y^2\quad \mbox{and}\quad H_2(x,y)=-2x-2y+x^2-y^2,$$
	respectively.
	
	To study the existence of crossing limit cycles of the field $Z_{XY}$ we use the closing equations, with which we get the system of nonlinear equations 
	$$\left\lbrace \def\arraystretch{1.5} \begin{array}{l}
		2q - p^2 - q^2 = 2s - r^2 - s^2,\\
		-2p -2q + p^2 - q^2 = -2r - 2s + r^2 - s^2,\\
		p^2+q^2=1,\\
		r^2+s^2=1,
	\end{array}\right. $$
	on which $(p,q,r,s)$ satisfies $(p,q)\neq (r,s)$. From the closing equations we have 
	$$2p - p^2 - q^2 = 2s - r^2 - s^2 \Rightarrow 2q-1=2s-1 \Rightarrow q=s.$$
	That away, with $p^2=1-q^2$ and $r^2=1-s^2$ we get $r^2=p^2$. Substituting this information on the second line of the system, we have
	$$-2p -2q + p^2 - q^2 = -2r - 2s + r^2 - s^2 \Rightarrow -2p -2q + p^2 - q^2 = -2r - 2q + p^2 - q^2 \Rightarrow p=r.$$
	Thus, $(p,q,r,s)=(p,q,p,q)$. 
	
	Therefore, since $(p,q)\neq (r,s)$, $Z_{XY}$ does not admit crossing periodic orbits that intersect $S^1$ in two points, which we illustrate in Figure \ref{centroselasemorb}.
	
\end{proof}	

\begin{corollary}
	There is $Z_{XY} \in \mathfrak{X}^{S^1}_3$ that has exactly one crossing limit cycle that intersects $S^1$ in two points.
\end{corollary}

\begin{proof}
	
	Consider the piecewise smooth vector field $Z_{XY}$, on which $$X(x,y)=\left(5 - 6 y, \frac{1}{2} + 8 x \right)\quad \mbox{and} \quad Y(x,y)=\left(-10 + 2y, \frac{1}{5} + 20x \right).$$
	The fields $X$ and $Y$ admit the following first integrals
	$$H_1(x,y)=-\frac{x}{2}+5y - 4x^2 - 3y^2 \quad \mbox{and} \quad H_2(x,y)=-\frac{x}{5}- 10 y - 10x^2 + y^2,$$
	respectively.
	
	To study the existence of crossing limit cycles of the field $Z_{XY}$ we use the closing equations, with which we obtain the system of nonlinear equations 
	$$\left\lbrace \def\arraystretch{1.5} \begin{array}{l}
		-\frac{p}{2}+ 5q-4p^2-3q^2 =-\frac{r}{2}+ 5s-4r^2-3s^2 , \\
		-\frac{p}{5}-10 q-10p^2+q^2=-\frac{r}{5}-10s-10r^2+s^2,\\
		p^2+q^2=1,\\
		r^2+s^2=1,
	\end{array}\right. $$
	where $(p,q,r,s)$ satisfies $(p,q)\neq (r,s)$. To solve this system we used the Grobner base, obtaining the solutions
	$$\left(-\frac{20\sqrt{\frac{2010061}{425309}}}{53}-\frac{3}{65}, \frac{30}{53}-\frac{2 \sqrt{\frac{2010061}{425309}}}{65}, \frac{20 \sqrt{\frac{2010061}{425309}}}{53}-\frac{3}{65}, \frac{2 \sqrt{\frac{2010061}{425309}}}{65}+\frac{30}{53}\right), $$
	$$\left(\frac{20 \sqrt{\frac{2010061}{425309}}}{53}-\frac{3}{65}, \frac{2 \sqrt{\frac{2010061}{425309}}}{65}+\frac{30}{53},-\frac{20\sqrt{\frac{2010061}{425309}}}{53}-\frac{3}{65}, \frac{30}{53}-\frac{2 \sqrt{\frac{2010061}{425309}}}{65}\right), $$
	that define the same closed curve. 
	
	To complete, we check if these points are in the crossing region. Remember that $h(x,y)=x^2+y^2-1$, so $\nabla h=(2x,2y)$. Hence,
	$$Xh(x,y)=\left\langle  \left(5 - 6 y, \frac{1}{2} + 8 x \right), (2x,2y) \right\rangle =10x + y + 4 xy,$$
	$$Yh(x,y)=\left\langle  \left(-10 + 2y, \frac{1}{5} + 20x \right), (2x,2y) \right\rangle =-20x + \frac{2y}{5} + 44xy.$$  
	Thus, 
	$$Xh(p,q)= \frac{2 \left(553972811600-27611377 \sqrt{854897033849}\right)}{5047577844725}<0,$$
	$$ Yh(p,q) =-\frac{4 \left(5343623 \sqrt{854897033849}-3046850463800\right)}{5047577844725}<0,$$
	$$Xh(r,s) =\frac{2 \left(27611377 \sqrt{854897033849}+553972811600\right)}{5047577844725}>0,$$
	$$ Yh(r,s) = \frac{4 \left(5343623 \sqrt{854897033849}+3046850463800\right)}{5047577844725}>0,$$
	which imply $Xh(p,q)Yh(p,q)>0$ and $Xh(r,s)Yh(r,s)>0$, that is, $(p,q)$ and $(r,s)$ belong to the crossing region of the discontinuity manifold $S^1$.
	
	Therefore, $Z_{XY}$ has only one crossing limit cycle that intersects $S^1$ in two points, which we illustrate in Figure \ref{selacentro1ciclo}.
\end{proof}

\begin{figure}[h]
	\begin{subfigure}{0.33\textwidth}
		\centering
		\includegraphics[scale=0.3]{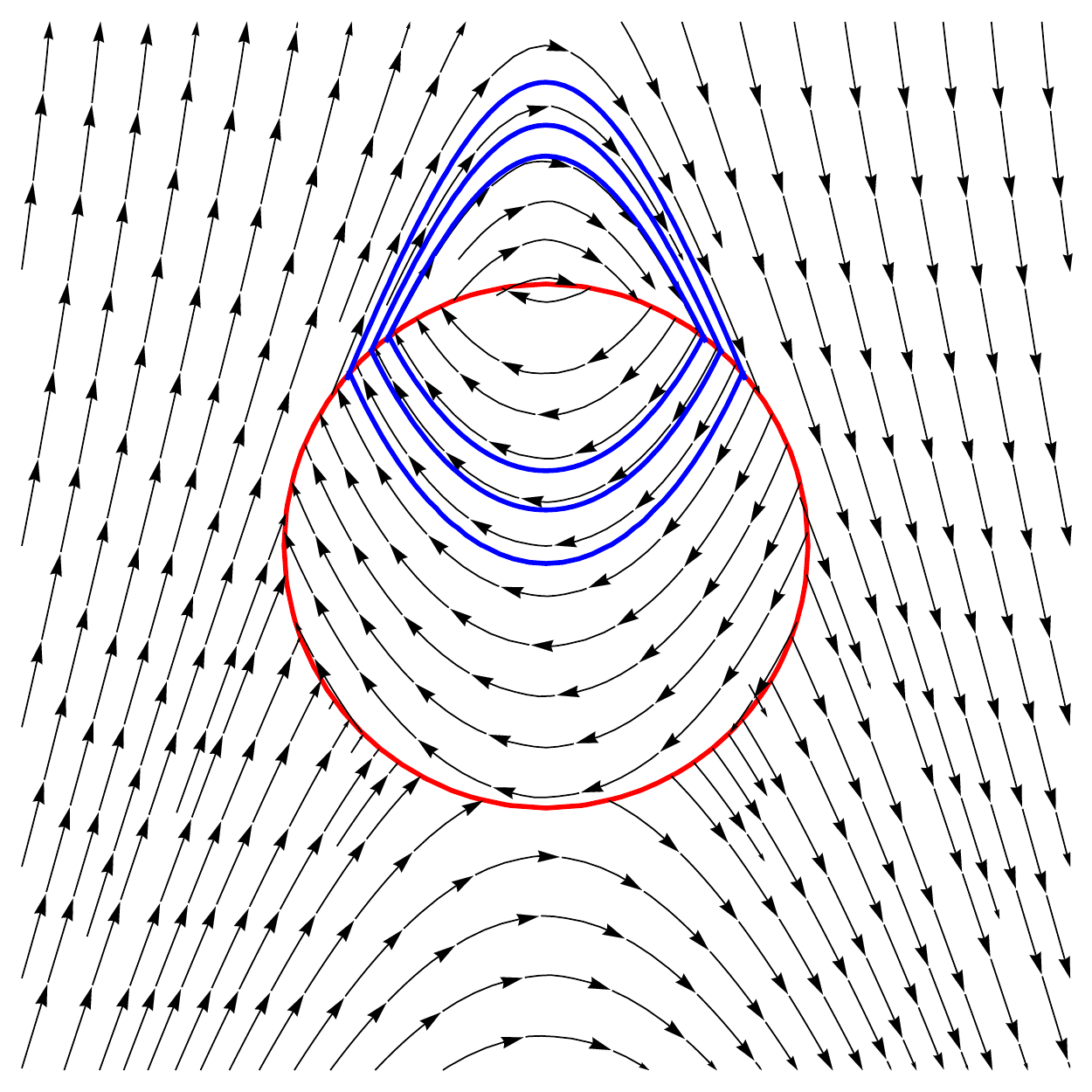}
		\caption{Periodic orbits}
		\label{scconjorb}
	\end{subfigure}%
	\begin{subfigure}{0.33\textwidth}
		\centering
		\includegraphics[scale=0.3]{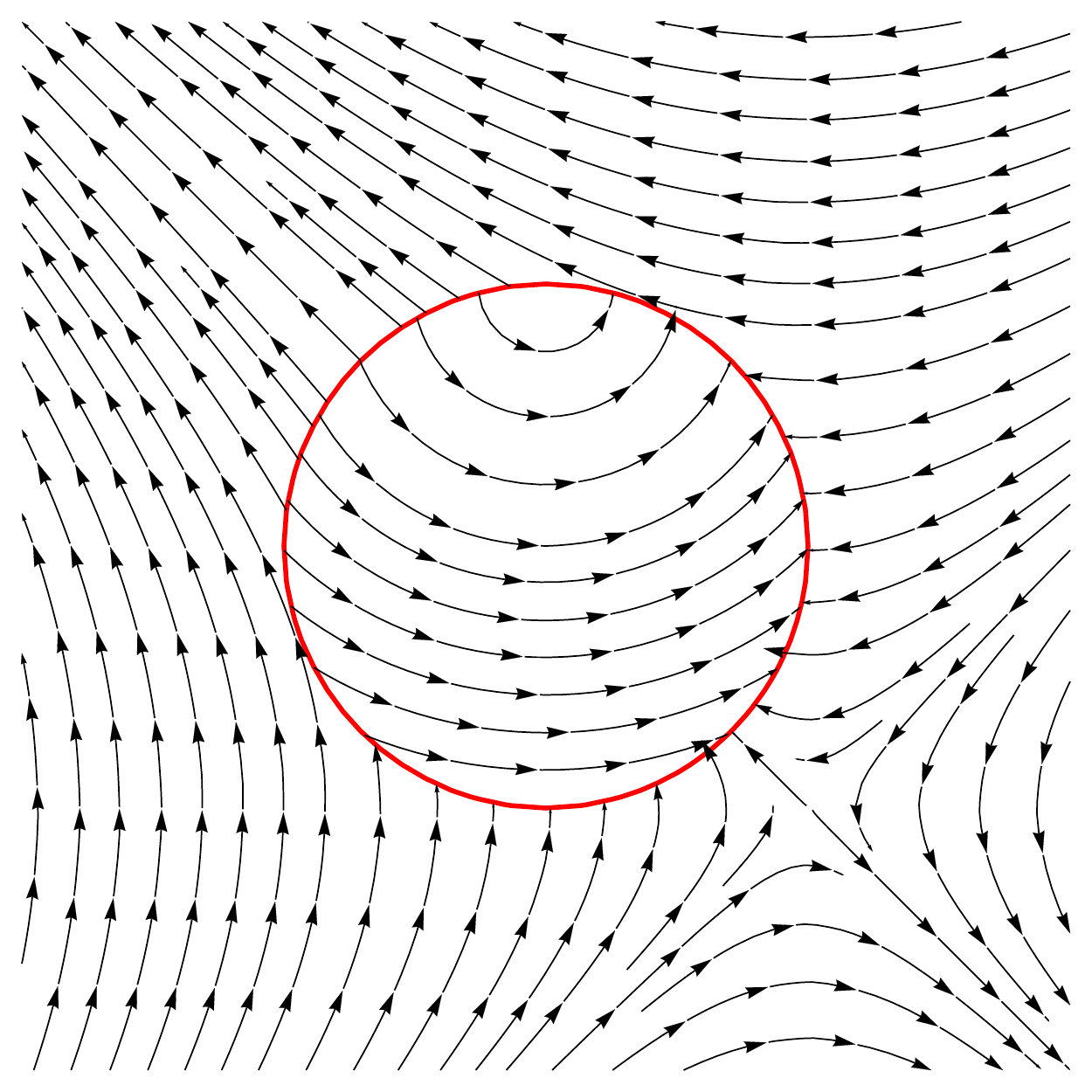}
		\caption{Without periodic orbits}
		\label{centroselasemorb}
	\end{subfigure}%
	\begin{subfigure}{0.33\textwidth}
		\centering
		\includegraphics[scale=0.3]{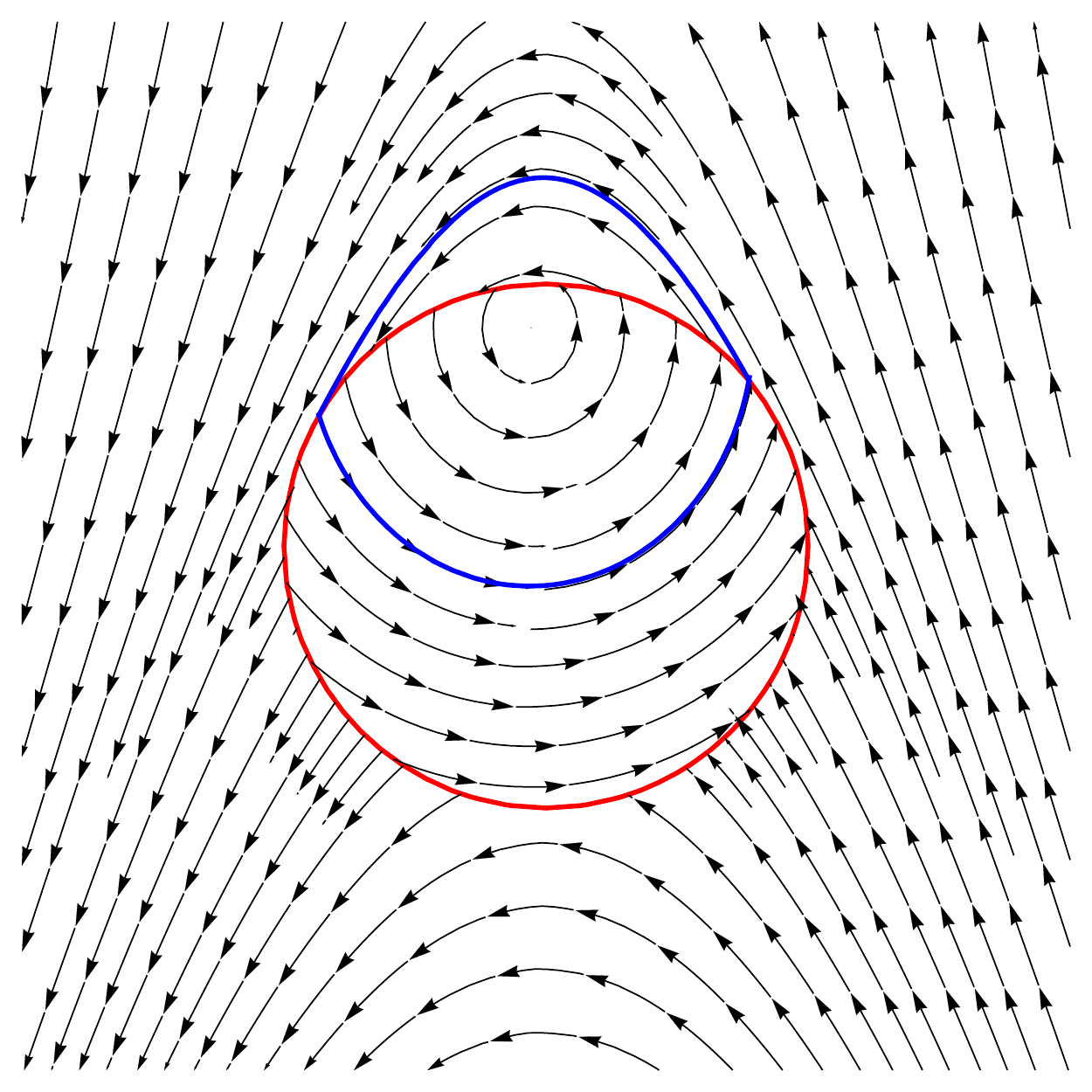}
		\caption{ One limit cycle}
		\label{selacentro1ciclo}
	\end{subfigure}%
	\caption{Phase portrait of vector fields $Z_{XY}\in \mathfrak{X}^{S^1}_3$.}
\end{figure} 

\subsection{Saddle-saddle case}

In this subsection we show that there are vector fields in $\mathfrak{X}^{S^1}_4$ that admit an infinite number of crossing periodic orbits, vector fields that do not admit periodic orbits and vector fields that have only one crossing limit cycle. To do this, we are going to present examples of each case.

Initially, we note that by the Theorem \ref{twocycle} the maximum number of crossing limit cycles that intersect $S^1$ in two points of $Z_{XY}\in \mathfrak{X}^{S^1}_4$ is less than or equal to two, that is, the  Theorem C holds for this case.

\begin{corollary}
	There is $Z_{XY} \in \mathfrak{X}^{S^1}_4$ that admits an infinite number of crossing periodic orbits that intersect $S^1$ in two points. 
\end{corollary}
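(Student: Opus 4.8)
The plan is to imitate the construction used in the preceding infinite-orbit corollaries (constant-saddle, saddle-center, center-saddle): I would exhibit an explicit pair $X,Y$ of divergence-free saddles whose first integrals $H_1,H_2$ are \emph{even in the variable} $x$, so that the closing system of Proposition \ref{clossingequation} is invariant under the reflection $(x,y)\mapsto(-x,y)$ and therefore admits the one-parameter family of solutions $(p,q,r,s)=(p,q,-p,q)$, which produces a continuum of crossing periodic orbits.

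First I would pin down the admissible normal form. Writing $X(x,y)=(\eta+ax+by,\zeta+cx-ay)$, where $\mathrm{div}\,X=0$ forces $d=-a$, the first integral produced in Proposition \ref{clossingequation} is
$$H_1(x,y)=-\zeta x+\eta y+axy-\tfrac{c}{2}x^2+\tfrac{b}{2}y^2.$$
The only terms odd in $x$ are $-\zeta x$ and $axy$, so $H_1$ is even in $x$ exactly when $\zeta=a=0$; the field then reduces to $X(x,y)=(\eta+by,cx)$, whose linear part $\left(\begin{smallmatrix}0&b\\c&0\end{smallmatrix}\right)$ has eigenvalues $\pm\sqrt{bc}$. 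Imposing $bc>0$ makes the singularity a saddle, as required for membership in $\mathfrak{X}^{S^1}_4$. The same reduction applied to $Y(x,y)=(\delta+lx+ky,\varepsilon+mx-ly)$ gives $Y(x,y)=(\delta+ky,mx)$ with $H_2(x,y)=\delta y-\tfrac{m}{2}x^2+\tfrac{k}{2}y^2$ and saddle condition $km>0$.

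A concrete choice satisfying all of this is
$$X(x,y)=(-6+2y,\,14x),\qquad Y(x,y)=(7-2y,\,-18x),$$
for which $bc=28>0$ and $km=36>0$, so both singularities are saddles and $\mathrm{div}\,X=\mathrm{div}\,Y=0$, while
$$H_1(x,y)=-6y-7x^2+y^2,\qquad H_2(x,y)=7y+9x^2-y^2$$
are both even in $x$. I would then write out the four closing equations, substitute the circle constraints $q^2=1-p^2$ and $s^2=1-r^2$ to clear the quadratic terms, and reduce the two first-integral equations to a linear system in the quantities $q-s$ and $p^2-r^2$. Solving it forces $p^2=r^2$ and $q=s$, so that $(p,q,r,s)=(p,q,\pm p,q)$; the requirement $(p,q)\neq(r,s)$ rules out $r=p$ for $p\neq0$ and leaves the symmetric family $(p,q,-p,q)$.

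Since this family is parametrized by the free point $(p,q)\in S^1$ with $p\neq0$, there are infinitely many distinct closed curves, each built from an interior arc of an $H_1$-hyperbola and an exterior arc of an $H_2$-hyperbola meeting at the mirror-symmetric points $(p,q)$ and $(-p,q)$, which yields the asserted infinitude of crossing periodic orbits. The computation itself is routine; the one genuine subtlety is the first step, namely arranging the coefficients so that \emph{both} first integrals are even in $x$ and \emph{both} linear parts remain hyperbolic rather than elliptic, which is exactly what the inequalities $bc>0$ and $km>0$ guarantee. Consistently with Theorem \ref{twocycle}, this continuum is non-generic: any perturbation breaking the reflection symmetry collapses it to at most two limit cycles. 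Following the convention of the preceding infinite-orbit corollaries, I would not separately verify the crossing condition at each pair of points, since exhibiting the one-parameter solution family already displays the periodic orbits.
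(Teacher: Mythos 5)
Your proposal is correct and follows essentially the same route as the paper: the paper's own proof exhibits an explicit pair of divergence-free saddles (namely $X=(-6-4y,-10x)$, $Y=(7-2y,-20x)$) whose first integrals contain no terms odd in $x$, substitutes $q^2=1-p^2$ and $s^2=1-r^2$ into the closing equations to force $p^2=r^2$ and $q=s$, and concludes with the one-parameter family $(p,q,-p,q)$. Your normal-form discussion and the saddle conditions $bc>0$, $km>0$ are a nice explicit framing of the same symmetry mechanism, and your choice to not verify the crossing condition matches the paper's treatment of these infinite-orbit corollaries.
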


\begin{proof}
	
	Consider the piecewise smooth vector field $Z_{XY}$, where
	$$X(x,y)=(-6 - 4 y, -10 x)\quad \mbox{and} \quad Y(x,y)=(7 - 2 y, -20 x).$$
	The fields $X$ and $Y$ have the following first integrals 
	$$H_1(x,y)=-6y+5x^2 -2y^2\quad \mbox{and}\quad H_2(x,y)=7y+10x^2 - y^2,$$
	respectively.
	
	To study the existence of crossing limit cycles of the field $Z_{XY}$ we use the closing equations, with which we get the system of nonlinear equations
	$$\left\lbrace \def\arraystretch{1.5} \begin{array}{l}
		-6q + 5p^2 -2q^2=-6s +5r^2- 2s^2,\\
		7q +10p^2 -q^2=7s +10r^2 -s^2,\\
		p^2+q^2=1,\\
		r^2+s^2=1,
	\end{array}\right. $$
	on which $(p,q,r,s)$ satisfies $(p,q)\neq (r,s)$. From the closing equations we have $q^2=1-p^2$ and $s^2=1-r^2$, in this way we obtain the system 
	$$\left\lbrace \def\arraystretch{1.5} \begin{array}{l}
		-6q +5p^2 -2(1-p^2)=-6s +5r^2 -2(1-r^2),\\
		7q +10p^2 -1+p^2=7s +10r^2 -1+s^2,
	\end{array}\right. \Rightarrow 
	\left\lbrace \def\arraystretch{1.5} \begin{array}{l}
		-6q + 7p^2=-6s +7r^2,\\
		7q +11p^2=7s +11r^2,
	\end{array}\right. \Rightarrow$$
	$$ 
	\left\lbrace \def\arraystretch{1.5} \begin{array}{l}
		-42q + 49p^2=-42s +49r^2,\\
		42q +44p^2=42s +44r^2,
	\end{array}\right.  \Rightarrow 93p^2=93r^2 \Rightarrow r=\pm p.$$
	Thus, replacing the acquired information in the first equation of the main system, we get
	$$-6q + 5p^2 -2q^2=-6s + 5r^2 -2s^2 \Rightarrow -6q+5p^2-2(1-p^2)=-6s+5p^2+1-2(1-p^2) \Rightarrow q=s.$$
	Hence, $(p,q,r,s)=(p,q,\pm p,q)$, and since $(p,q)\neq (r,s)$ we cannot have $r=p$. So, $(p,q,r,s)=(p,q,-p, q)$.
	
	Therefore, $Z_{XY}$ admits an infinite number of crossing periodic orbits that intersect $S^1$ in two points, which we illustrate in Figure \ref{ssconjorb}.
	
\end{proof}

\begin{corollary}
	There is $Z_{XY} \in \mathfrak{X}^{S^1}_4$ that does not have crossing periodic orbits that intersect $S^1$ in two points. 
\end{corollary}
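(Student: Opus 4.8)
The plan is to follow the template of the ``no crossing periodic orbit'' corollaries already established for $\mathfrak{X}^{S^1}_0$, $\mathfrak{X}^{S^1}_1$, $\mathfrak{X}^{S^1}_2$ and $\mathfrak{X}^{S^1}_3$: exhibit one explicit pair of trace-free saddle fields and show, directly from the closing equations of Proposition \ref{clossingequation}, that the system admits no solution $(p,q,r,s)$ with $(p,q)\neq(r,s)$. First I would fix linear fields $X(x,y)=(\eta+ax+by,\zeta+cx-ay)$ and $Y(x,y)=(\delta+lx+ky,\varepsilon+mx-ly)$ with $\operatorname{div}X=\operatorname{div}Y=0$, each having positive discriminant so that both singularities are genuine saddles, and I would write down the corresponding first integrals $H_1,H_2$ supplied by Proposition \ref{clossingequation}. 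This yields the closing system in the unknowns $p,q,r,s$, consisting of $H_1(p,q)=H_1(r,s)$, $H_2(p,q)=H_2(r,s)$, $p^2+q^2=1$ and $r^2+s^2=1$.

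Next I would eliminate $q^2$ and $s^2$ using the last two equations, which turns the two integral equations into relations whose only quadratic contribution is a multiple of $p^2-r^2$. Writing $u=p-r$ and $v=q-s$, a suitable linear combination of the two relations is chosen to cancel the $p^2-r^2$ term, leaving a single linear equation $\alpha\,u+\beta\,v=0$. Coupling this with the identity $p^2-r^2=-(q^2-s^2)$ forced by the two circle constraints, that is $u(p+r)=-v(q+s)$, the aim is to arrange the coefficients of the chosen fields so that the factorization $u\bigl[(p+r)+\tfrac{\beta}{\alpha}(q+s)\bigr]=0$ admits only the branch $u=0$; feeding $u=0$ back then gives $v=0$, hence $(p,q)=(r,s)$, contradicting distinctness and proving that no crossing periodic orbit exists.

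The main obstacle is precisely the choice of coefficients making this collapse genuine. In the constant and center cases, one of the two first integrals restricts to a \emph{linear} function on $S^1$ (for instance $H_1=x$ for a suitable constant field, or $H_1=2y-1$ on $S^1$ for the center $X=(2-2y,2x)$), so a single closing equation immediately pins down one coordinate, $p=r$ or $q=s$. For two saddles this shortcut is unavailable: a trace-free linear field whose first integral restricts to a linear function on the unit circle necessarily has linear part $\bigl(\begin{smallmatrix}0&b\\-b&0\end{smallmatrix}\bigr)$, a \emph{center}, not a saddle. One is therefore forced to work with the fully coupled quadratic system, and the delicate point is to select the saddles so that the residual factor $(p+r)+\tfrac{\beta}{\alpha}(q+s)$ cannot vanish together with the circle constraints for any admissible pair $(p,q)\neq(r,s)$. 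This is exactly the dichotomy separating the present statement from its companion corollary, where the analogous elimination leaves the branch $p=-r$ alive and thereby produces a whole continuum of crossing periodic orbits; steering the coefficients to the opposite side of this dichotomy, while keeping both discriminants positive, is the only nonroutine step, after which the contradiction with $(p,q)\neq(r,s)$ completes the proof.
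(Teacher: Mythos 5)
Your overall strategy is the right one and is essentially the paper's: pick an explicit pair of trace-free saddles, reduce the closing system of Proposition \ref{clossingequation} modulo the two circle constraints, and find a linear combination of the two integral equations that degenerates to a linear relation in $(p,q,r,s)$, from which $(p,q)=(r,s)$ follows. But as written the argument has a genuine gap: the corollary is an existence statement, and you stop exactly at the point where the witness must be produced. You yourself flag ``steering the coefficients to the opposite side of this dichotomy'' as the only nonroutine step, and then you do not carry it out, so no vector field is actually shown to have the claimed property. The paper closes this gap with the concrete choice $X(x,y)=(1-2y,\,-5-2x)$ and $Y(x,y)=(2y,\,5+2x)$ (both genuine saddles), whose first integrals $H_1=5x+y+x^2-y^2$ and $H_2=-5x-x^2+y^2$ satisfy $H_1+H_2=y$: adding the two closing equations gives $q=s$ at once, the circle constraints then give $p^2=r^2$, and the second closing equation $-5p-p^2+q^2=-5r-r^2+s^2$ forces $p=r$, contradicting $(p,q)\neq(r,s)$. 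Any complete proof must contain such an explicit verification.

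A secondary imprecision: your claim that, after eliminating $q^2$ and $s^2$, the ``only quadratic contribution is a multiple of $p^2-r^2$'' is false in general. By Proposition \ref{clossingequation} the first integral of $X=(\eta+ax+by,\zeta+cx-ay)$ contains the cross term $axy$, which contributes $a(pq-rs)$ to the closing equation; this term survives the substitutions $q^2=1-p^2$, $s^2=1-r^2$ and is not a multiple of $p^2-r^2$, so the single linear equation $\alpha u+\beta v=0$ you rely on need not exist. Your elimination scheme therefore tacitly assumes $a=l=0$, i.e., that both linear parts are anti-diagonal. That restriction is harmless here (saddles of that form exist, and the paper's example is one), but it must be stated. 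Your observation that no single trace-free saddle has a first integral restricting to a linear function on $S^1$ is correct and well taken; the point the paper exploits is that a \emph{combination} of the two integrals can still be linear, which is precisely what happens for the example above.
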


\begin{proof}
	Consider the piecewise smooth vector field $Z_{XY}$, on which
	$$X(x,y)=(1 - 2y, -5 - 2x)\quad \mbox{and} \quad Y(x,y)=(2y, 5 + 2x).$$
	The fields $X$ and $Y$ admit the following first integrals
	$$H_1(x,y)=5x +y + x^2 - y^2\quad \mbox{and}\quad H_2(x,y)=-5x - x^2 + y^2,$$
	respectively.
	
	To study the existence of crossing limit cycles of the field $Z_{XY}$ we use the closing equations, with which we get the system of nonlinear equations
	$$\left\lbrace \def\arraystretch{1.5} \begin{array}{l}
		5 p +q + p^2 - q^2 = 5 r + s + r^2 - s^2,\\ 
		-5 p - p^2 + q^2 = -5 r - r^2 + s^2,\\
		p^2+q^2=1,\\
		r^2+s^2=1,
	\end{array}\right. $$
	where $(p,q,r,s)$ satisfies $(p,q)\neq (r,s)$. Adding the first and the second lines above, we determine that $q=s$.
	Thus, since $p^2=1-q^2$ and $r^2=1-s^2$, we have $r^2=p^2$. Replacing this information in the second line of the system, we get
	$$-5 p - p^2 + q^2 = -5 r - r^2 + s^2\Rightarrow -5 p - p^2 + q^2 = -5 r - p^2 + q^2 \Rightarrow p=r.$$
	Then, $(p,q,r,s)=(p,q,p,q)$. 
	
	Therefore, since $(p,q)\neq (r,s)$, $Z_{XY}$ does not admit crossing periodic orbits that intersect $S^1$ in two points, as we illustrate in Figure \ref{sssemorb}.
	
\end{proof}

\begin{corollary}
	There is $Z_{XY} \in \mathfrak{X}^{S^1}_3$ that has exactly one crossing limit cycle that intersects $S^1$ in two points.
\end{corollary}

\begin{proof}
	
	Consider the piecewise smooth vector field $Z_{XY}$, where $$X(x,y)=\left(-\frac{1}{2} + 2x - 4y, -1 - 4x - 2y \right)\quad \mbox{and} \quad Y(x,y)=(5 + 2x - 2y, 1 - 6x - 2y).$$
	The fields $X$ and $Y$ have the following first integrals 
	$$H_1(x,y)=x - \frac{y}{2} + 2x^2 + 2xy - 2y^2 \quad \mbox{and} \quad H_2(x,y)=-x + 5y + 3x^2 + 2xy - y^2,$$
	respectively.
	
	To study the existence of crossing limit cycles of the field $Z_{XY}$ we use the closing equations, with which we get the system of nonlinear equations
	$$\left\lbrace \def\arraystretch{1.5} \begin{array}{l}
		p - \frac{q}{2} + 2p^2 + 2pq - 2q^2=x - \frac{s}{2} + 2r^2 + 2rs - 2s^2,\\
		-p + 5q + 3p^2 + 2pq - q^2=-r + 5s + 3r^2 + 2rs - s^2,\\
		p^2+q^2=1,\\
		r^2+s^2=1,
	\end{array}\right. $$
	on which $(p,q,r,s)$ satisfies $(p,q)\neq (r,s)$. To solve this system we used the Grobner base, obtaining the solutions
	$$\left(-\frac{11 \sqrt{\frac{9067}{137}}}{142}-\frac{18}{71}, \frac{99}{142}-\frac{2 \sqrt{\frac{9067}{137}}}{71}, \frac{11 \sqrt{\frac{9067}{137}}}{142}-\frac{18}{71}, \frac{2 \sqrt{\frac{9067}{137}}}{71}+\frac{99}{142} \right), $$
	$$\left(\frac{11 \sqrt{\frac{9067}{137}}}{142}-\frac{18}{71}, \frac{2 \sqrt{\frac{9067}{137}}}{71}+\frac{99}{142}, -\frac{11 \sqrt{\frac{9067}{137}}}{142}-\frac{18}{71}, \frac{99}{142}-\frac{2 \sqrt{\frac{9067}{137}}}{71} \right), $$
	that represent the same closed curve. 
	
	To complete, we check if these points are in the crossing region. Remember that $h(x,y)=x^2+y^2-1$, then $\nabla h=(2x,2y)$. Thus,
	$$Xh(x,y)=\left\langle  \left(-\frac{1}{2} + 2x - 4y, -1 - 4x - 2y \right), (2x,2y) \right\rangle =-28 x y-12 x-\frac{y}{10},$$
	$$Yh(x,y)=\left\langle  (5 + 2x - 2y, 1 - 6x - 2y), (2x,2y) \right\rangle =14 x-44 x y.$$  
	Hence, 
	$$Xh(p,q)= \frac{12077 \sqrt{\frac{9067}{137}}}{10082}-\frac{9067}{9727}>0,\;\;\; Yh(p,q) =\frac{1175 \sqrt{1242179}-643757}{690617}>0,$$
	$$Xh(r,s) =-\frac{12077 \sqrt{\frac{9067}{137}}}{10082}-\frac{9067}{9727}<0,\;\;\; Yh(r,s) = \frac{-1175 \sqrt{1242179}-643757}{690617}<0,$$
	which imply $Xh(p,q)Yh(p,q)>0$ and $Xh(r,s)Yh(r,s)>0$, that is, $(p,q)$ and $(r,s)$ belong to the crossing region of the discontinuity manifold $S^1$.
	
	Therefore, $Z_{XY}$ has only one crossing limit cycle that intersects $S^1$ in two points, as we illustrate in Figure \ref{ss1ciclo}.
	
\end{proof}

\begin{figure}[h]
	\begin{subfigure}{0.33\textwidth}
		\centering
		\includegraphics[scale=0.3]{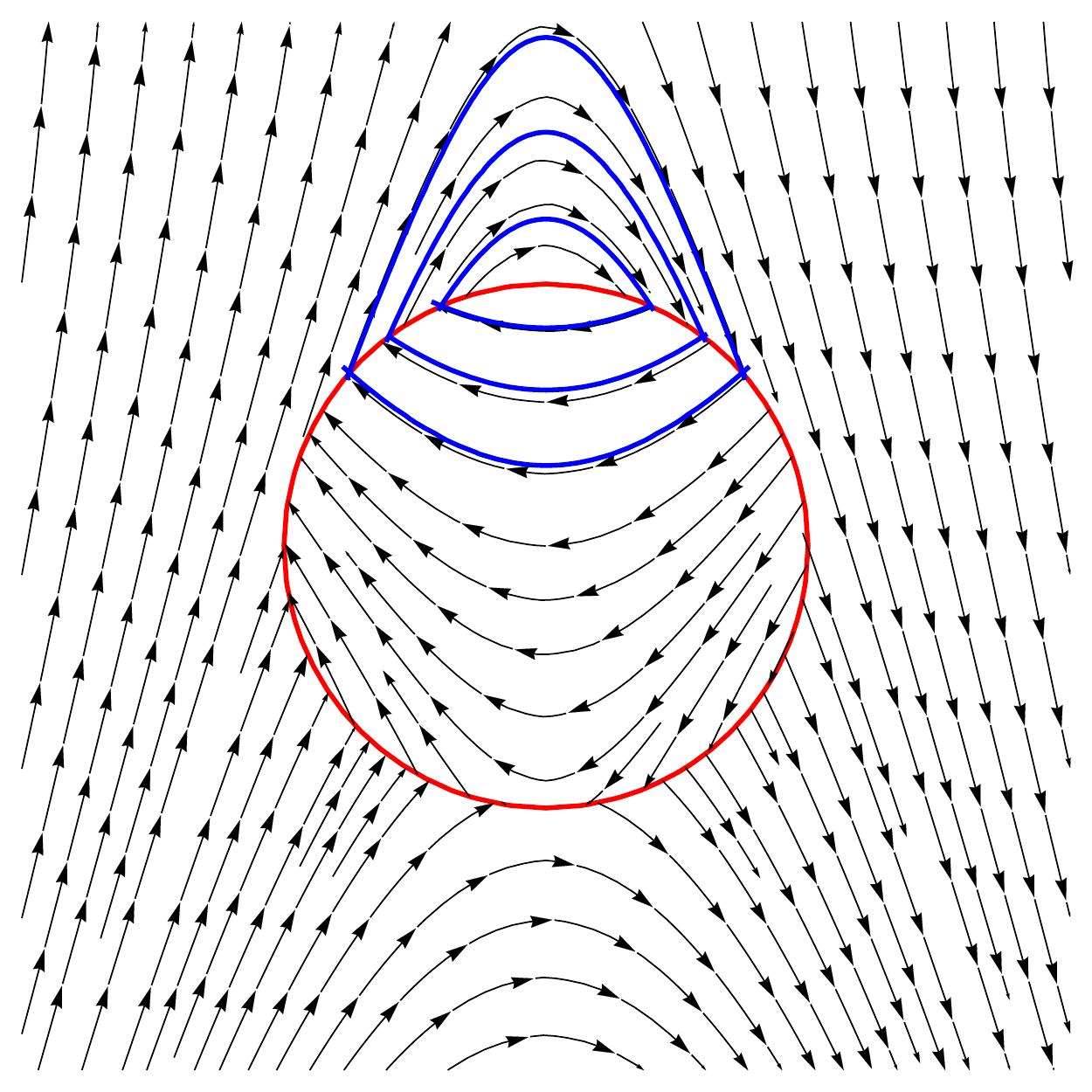}
		\caption{Periodic orbits}
		\label{ssconjorb}
	\end{subfigure}%
	\begin{subfigure}{0.33\textwidth}
		\centering
		\includegraphics[scale=0.3]{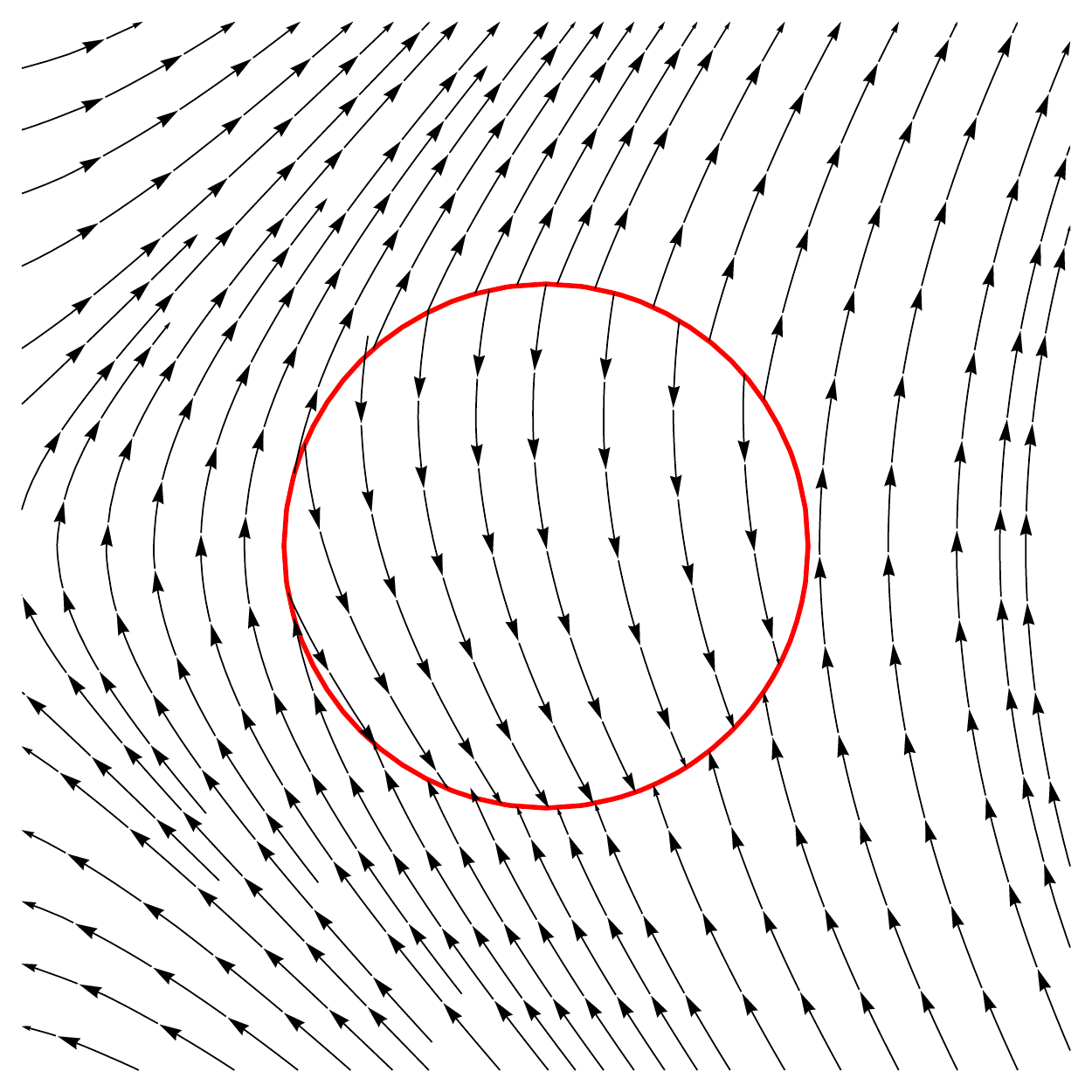}
		\caption{Without periodic orbits}
		\label{sssemorb}
	\end{subfigure}%
	\begin{subfigure}{0.33\textwidth}
		\centering
		\includegraphics[scale=0.3]{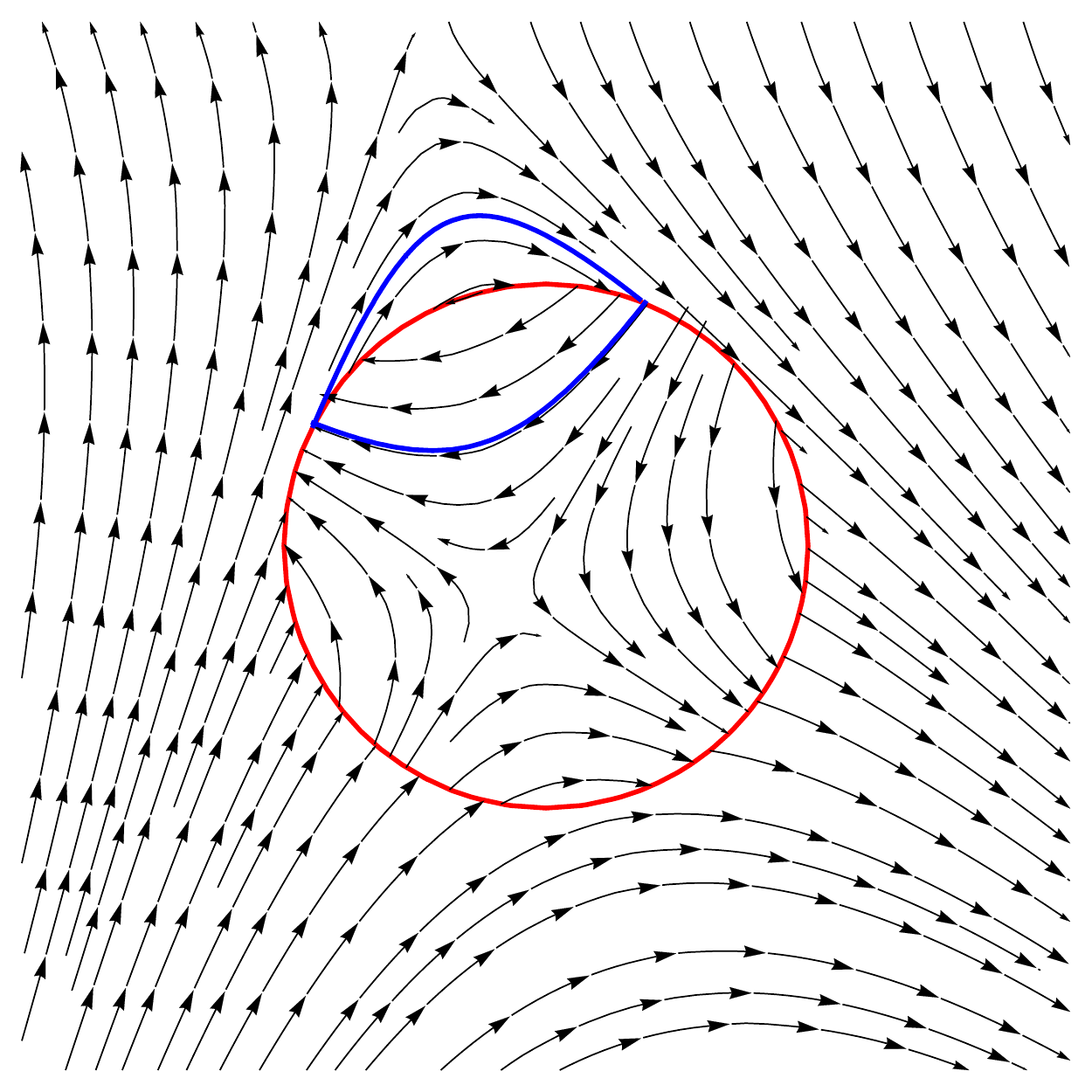}
		\caption{ One limit cycle}
		\label{ss1ciclo}
	\end{subfigure}%
	\caption{Phase portrait of vector fields $Z_{XY}\in \mathfrak{X}^{S^1}_4$.}
\end{figure}

\section*{Acknowledgements}

Mayara Caldas was financed in part by the Coordenação de Aperfeiçoamento de Pessoal de Nível Superior - Brasil (CAPES) - Finance Code 001. Ricardo Martins was partially supported by FAPESP grants 2021/08031-9, 2018/03338--6, 2018/13481--0, CNPq grant 434599/2018--2 and Unicamp/Faepex grant 2475/21. The authors thank Espa\c co da Escrita – Pr\'o-Reitoria de Pesquisa  - UNICAMP - for the language services provided.

The authors declare that they have no conflict of interest.

\end{document}